\documentclass{amsart}
%\usepackage{amsmath, amsfonts, amsthm}
%\usepackage[rightbars, color]{changebar}
%\cbcolor{blue}

\textwidth=34cc
 \calclayout

\usepackage{bm}

\makeatletter
\DeclareRobustCommand*{\bfseries}{%
  \not@math@alphabet\bfseries\mathbf
  \fontseries\bfdefault\selectfont
  \boldmath
}
\makeatother

\newcommand*\refc{\eqref}    % use to put parentheses around referenced items
%\newcommand*\refc{\ref}     % use to stop putting parentheses around referenced items

%%%%%%%%%%%%%% THEOREM LIKE  ENVIRONMENTS    

\theoremstyle{plain}
\newtheorem{theorem}{Theorem}[section]

\theoremstyle{plain}
\newtheorem{proposition}[theorem]{Proposition}

\theoremstyle{plain}
\newtheorem{corollary}[theorem]{Corollary}

\theoremstyle{plain}
\newtheorem{lemma}[theorem]{Lemma}

\theoremstyle{definition}
\newtheorem{definition}[theorem]{Definition}
\theoremstyle{definition}
\newtheorem{example}[theorem]{Example}

\theoremstyle{definition}
\newtheorem{remark}[theorem]{Remark}

\theoremstyle{definition}
\newtheorem{notation}[theorem]{Notation}

\theoremstyle{definition}
\newtheorem{conjecture}[theorem]{Conjecture}

\theoremstyle{plain}
\newtheorem{assumption}[theorem]{Assumption}

%%%%%%%%%%%%%%%%%%%%%%%%%%%%%%%%%%%%%%%%%

%%%%%%Numbering

\numberwithin{equation}{section}
\numberwithin{figure}{section}
%%%%%%%%%%%%%%%%%%%%%%%%%%%%%%%%%%%%%%%%%%%%%%%%%
%%%%%%%Metadata

\def\classno{\subjclass[2000]}
\def\extraline{\thanks}

\address{ Department of Mathematics\\ University of Iowa\\ Iowa
City, Iowa}
\email{ frederick-goodman@uiowa.edu}

\def\ignore#1{\relax}

\usepackage[svgnames]{xcolor}
\usepackage[plainpages=false, pdfpagelabels,  colorlinks=true, pdfstartview=FitV, linkcolor=DarkBlue, citecolor=DarkRed, urlcolor=blue]{hyperref}

%%%%%%%%%%%%%%%%%%  MACROS   %%%%%%%%%%%%%%%%%%%

%%%%%%%% FAT LETTERS    %%%%%%%%%%%%

\def\Z{{\mathbb Z}}

\def\Q{{\mathbb Q}}

%%%%%%%%%% FUNNY ABREVIATIONS %%%%%%%%%%%%%%%%%%%

\def\inv{^{-1}}

\def\ppm{^{\pm 1}}

\def\abmw #1{\widehat{\mathcal W}_{#1}}  %stands for affine Birman-Wenzl
\def\affbmw{\abmw}
\def\bmw #1{\mathcal W_{#1}}  %stands for Birman-Wenzl
\def\degbmw#1{\mathcal N_{#1}}   %stands for degenerate Birman Wenzl
\def\affdegbmw#1{\widehat{\mathcal N}_{#1}}
%\def\affdegbmw#1{\widehat{\mathscr N}_{#1}}
     %stand for finite degenerate
    %stands for affine degenerate
\def\both#1{\mathcal A_{#1}}
\def\affboth#1{\widehat {\mathcal A}_{#1}}

\def\dfn{\mathcal D_{f,n}}

%%%%%%  Greek
\def\la{\lambda}
\def\La{\Lambda}

%%%%%%   Roman

\def\char{{\rm char}}

%%%%  bold symbols
\def\abold{{\bm a}}
\def\ubold{{\bm u}}
\def\qbold{{\bm q}}

\def\rhobold{{\bm \rho}}
\def\omegabold{{\bm \omega}}

 %%%%% Fraktur
 \def\S{{\mathfrak{S}}}

\title[Cyclotomic BMW algebras]{Remarks on Cyclotomic and Degenerate Cyclotomic BMW algebras}

\author{Frederick M. Goodman}

\classno{20C08, 16G99, 81R50}
\extraline{This work grew out of discussions with Arun Ram regarding the meaning of admissibility conditions.   I am grateful to Arun also for hospitality and financial support at the University of Melbourne.  I am grateful to  Kevin Buzzard for  responding  to a question on {\tt mathoverflow.net}.  I thank the referee for some helpful suggestions.}

%\date{}

\begin{document}
\noindent
{\em Journal of Algebra}  {\bf 364} (2012) 13-37
\bigskip
  \maketitle
 
 \begin{abstract}  
We relate the structure of cyclotomic and degenerate cyclotomic BMW algebras, for arbitrary parameter values,   to that for admissible parameter values.  In particular, we show that these algebras are cellular.
We characterize those parameter sets for affine  BMW algebras over an algebraically closed field that permit the algebras to have non--trivial cyclotomic quotients.
 \end{abstract}

 \section{Introduction}
 This paper is a contribution to the study of affine and degenerate affine Birman--Wenzl--Murakami (BMW) algebras.
 In order to study the finite dimensional representation theory of these infinite dimensional algebras, one introduces cyclotomic quotients, which are BMW analogues of cyclotomic and degenerate cyclotomic Hecke algebras (see ~\cite{ariki-koike,ariki-book, kleshchev-book}).  
 
 A peculiar feature of the cyclotomic algebras is that the parameters cannot be chosen arbitrarily; that is, unless the parameters satisfy certain relations, the algebras (defined over a field) collapse to cyclotomic or degenerate cyclotomic Hecke algebras.  These ``obligatory" conditions did not seem adequate at first to develop the representation theory.  Consequently,  several authors, notably Ariki, Mathas and Rui ~\cite{ariki-mathas-rui},  Wilcox and Yu ~\cite{Wilcox-Yu},  and Rui and Xu ~\cite{rui-2008} introduced stronger ``admissibility" conditions under which the algebras could be shown to have a well--behaved representation theory.
 
Up until now, the cyclotomic algebras have been studied only under the assumption of admissibility of the parameters.  Despite the successes achieved, this was not satisfactory, since {\em a priori} the admissibility requirement might be too restrictive to capture the entire finite dimensional representation theory of the affine algebras.  

In this paper, we extend the analysis of cyclotomic and degenerate cyclotomic BMW algebras to include the case of non-admissible parameters. We show that the structure and representation theory of the cyclotomic algebras with non-admissible parameters can be derived from that of the algebras with admissible parameters.

 \subsection{Background}
 Affine and cyclotomic BMW algebras and their degenerate versions  arise naturally by several different ``affinization" processes.  One such process amounts to making the Jucys-Murphy elements in the ordinary BMW or Brauer algebras into variables, retaining the relations between these elements and the standard generators of the BMW or Brauer algebras.
 This point of view was stressed by Nazarov, in defining degenerate affine BMW algebras~\cite{nazarov}.  
 For the BMW algebras, there is a geometric affinization process:    The ordinary BMW algebras can be realized as algebras of tangles in the disc cross the interval, modulo Kauffman skein relations~\cite{morton-wassermann}.  To affinize these algebras, one should replace the disc by the annulus;  alternatively, one replaces the ordinary braid group by the affine or type $B$  braid group.
 This is the motivation cited by H\"aring--Oldenburg~\cite{H-O2} for introducing affine and cyclotomic BMW algebras.   Finally,   Orellana and Ram provide an affinization of Schur--Weyl duality~\cite{orellana-ram} which produces representations of the affine braid group by $\check R$--matrices of a quantum group;  for symplectic or orthogonal quantum groups, this process yields representations of cyclotomic BMW algebras (over the complex numbers, with special parameters).   \ignore{The classical version of this construction yields representations of 
 degenerate cyclotomic BMW algebras.}
 
 As mentioned above, degenerate affine BMW algebras were introduced by Nazarov  \cite{nazarov}  under the name {\em affine Wenzl algebras}.    
The   cyclotomic  quotients of these algebras were introduced by Ariki, Mathas, and Rui in \cite{ariki-mathas-rui}  and studied further by Rui and Si in~\cite{rui-si-degenerate}, under the name {\em cyclotomic Nazarov--Wenzl algebras}.
  Affine and cyclotomic BMW algebras were introduced by 
 H\"aring--Oldenburg in~\cite{H-O2} and    studied by 
Goodman and   Hauschild  Mosley~\cite{GH1,GH2,GH3,goodman-2008},  Rui, Xu, and Si~\cite{rui-2008,Rui-Si-Cyclotomic-II},   Wilcox and Yu~\cite{Wilcox-Yu,Wilcox-Yu2,Wilcox-Yu3,Yu-thesis}, and Ram, Orellana, Daugherty and Virk~\cite{orellana-ram, Ram-Daugherty-Virk}.

The papers cited above study the algebras under the assumption of admissibility.    It has been shown that the algebras with admissible parameters are cellular
~\cite{ariki-mathas-rui,Wilcox-Yu2,Yu-thesis,rui-2008,Rui-Si-Cyclotomic-II,goodman-2008,goodman-graber1,goodman-graber2};  simple modules over a field have been classified~\cite{rui-si-degenerate,
Rui-Si-Cyclotomic-II};
and the non-degenerate cyclotomic BMW algebras have been shown to be isomorphic to algebras of tangles~\cite{GH2,GH3,Wilcox-Yu2,Wilcox-Yu3,Yu-thesis}.   

\ignore{Despite the success achieved so far in the representation theory, it was not entirely satisfactory that the theory could be worked out only under the assumption of admissibility.  It is reasonable to regard the affine and degenerate affine BMW algebras as the primary objects of interest;  finite dimensional representations of the affine algebras factor through cyclotomic quotients, but conceivably through cyclotomic quotients with non--admissible parameters.
On the other hand, there was 
 already some evidence in the literature  that the admissible case should govern the general case;  see, for example,  the theorem of Ariki, Mathas and Rui, quoted here as Theorem \ref{theorem:  ARM theorem on deg. affine algebras with rational coefficient sequence}.}

\subsection{Summary of results}
In this note, we show that  the structure of the cyclotomic and degenerate cyclotomic BMW  algebras for general parameters can be  derived from the admissible case.  An affine (resp.\  degenerate affine) BMW algebra $A_n$  contains a copy of the finite dimensional BMW algebra (resp.\  Brauer algebra)  $B_n$ and an additional ``affine" generator $y_1$, satisfying several relations with the generators of $B_n$.  A cyclotomic quotient is determined by a polynomial relation
\begin{equation}\label{introduction cyclotomic relation}
(y_1 - u_1) \cdots (y_1 - u_r) = 0.
\end{equation}
Denote the cyclotomic quotient determined by \refc{introduction cyclotomic relation} by $A_{n, r}(u_1, \dots, u_r)$.  Let $J_{n, r}(u_1, \dots, u_r)$  denote the ideal generated by the ``contraction" $e_1$ in $A_{n, r}(u_1, \dots, u_r)$.  Then we have a short exact sequence
\begin{equation}\label{introduction short exact sequence 1}
0 \to J_{n, r}(u_1, \dots, u_r) \to A_{n, r}(u_1, \dots, u_r) \to H_n(u_1, \dots, u_r) \to 0,
\end{equation}
where $H_n(u_1, \dots, u_r)$ is the cyclotomic Hecke algebra (resp.\  degenerate cyclotomic Hecke algebra).
Admissibility of the parameters means that $\{e_1, y_1e_1, \dots, y_1^{r-1} e_1\}$ is linearly independent in
\break $A_{2,r}(u_1, \dots, u_r)$;  this condition translates into specific conditions on the parameters of the algebra which are discussed in Sections \ref{section: admissibility in the degenerate case} and  \ref{subsection: admissibility in the non degenerate case}.  Suppose now that we are working over a field and that admissibility fails, but $e_1 \ne 0$;  then there exists a $d$ with
$0 < d < r$ such that $\{e_1, y_1e_1, \dots, y_1^{d-1} e_1\}$ is linearly independent in $A_{2,r}(u_1, \dots, u_r)$ but
$\{e_1, y_1e_1, \dots, y_1^{d} e_1\}$ is linearly dependent. We say that the parameters are $d$--semi--admissible.  We show that there exists a subset $\{v_1, \dots, v_d\} \subset \{u_1, \dots, u_r\}$ such that
\begin{enumerate}
\item $A_{n, d}(v_1, \dots, v_d)$ has admissible parameters, and
\item $J_{n, d}(v_1, \dots, v_d) \cong J_{n, r}(u_1, \dots, u_r)$.
\end{enumerate}
Thus we have
\begin{equation}\label{introduction short exact sequence 2}
0 \to J_{n, d}(v_1, \dots, v_d) \to A_{n, r}(u_1, \dots, u_r) \to H_n(u_1, \dots, u_r) \to 0.
\end{equation}
Two consequences of this analysis are the following:
\begin{enumerate}
\item The cyclotomic algebras are  cellular, under very mild hypotheses; in particular, when the ground ring is a field, the algebras are always cellular.  
\item Every finite dimensional simple module of an affine
(resp.\  degenerate affine)
  BMW algebra   over an algebraically closed field factors through a cyclotomic 
  (resp.\  degenerate cyclotomic)  
   BMW algebra with admissible parameters, or through a cyclotomic (resp.\  degenerate cyclotomic) Hecke algebra.
\end{enumerate}
 The latter result is a step towards classifying the simple modules of the affine and degenerate affine BMW algebras over an algebraically closed field.
   
   The main results of Ariki, Mathas and Rui~\cite{ariki-mathas-rui} regarding degenerate cyclotomic BMW algebras depend on the hypothesis that $2$ is invertible in the ground ring.  We point out in  that this hypothesis can be eliminated; see Section 3.

Finally, we characterize those parameter sets for affine  BMW algebras over an algebraically closed field that permit the algebras to have non--trivial cyclotomic quotients, or equivalently,  finite dimensional modules $M$ with $e_1M \ne 0$; see Theorem \ref{theorem:  theorem on  affine bmw algebras with rational coefficient sequence}.  
The analogous result for degenerate affine BMW algebras was proved in~\cite{ariki-mathas-rui}; we have made a minor improvement by removing the restriction that the characteristic of the field should  be different from $2$;  see Theorem   \ref{theorem:  ARM theorem on deg. affine algebras with rational coefficient sequence}.

 \section{Preliminaries}

\subsection{Definition of degenerate affine and cyclotomic BMW algebras}

Fix a positive integer $n$ and a commutative ring $S$ with
multiplicative identity.  Let  $\Omega=\{\omega_a: \ a\ge0\} $  be a sequence of elements of $S$.

\begin{definition}[\cite{nazarov}]
\label{Waff relations}
 The  {\em degenerate affine BMW algebra}
$\affdegbmw {n, S} =\affdegbmw {n, S}(\Omega)$ is 
the unital associative $S$--algebra with generators
$\{s_i,e_i, y_j : \ 1\le i<n \text{ and }1\le j\le n \} $
and relations:
\begin{enumerate}
    \item (Involutions)\ \ 
$s_i^2=1$, for $1\le i<n$.
    \item (Affine braid relations)
\begin{enumerate}
\item $s_is_j=s_js_i$ if $|i-j|>1$,
\item $s_is_{i+1}s_i=s_{i+1}s_is_{i+1}$,  for $1\le i<n-1$,
\item $s_iy_j=y_js_i$ if $j\ne i,i+1$.
\end{enumerate}
    \item (Idempotent relations)\ \ 
$e_i^2=\omega_0e_i$, \ \  for $1\le i<n$.
   \item (Compression relations)\ \ 
        $e_1y_1^ae_1=\omega_ae_1$, for $a>0$.
    \item (Commutation relations)
\begin{enumerate}
\item $s_ie_j=e_js_i$, and  $e_ie_j=e_je_i$ if $|i-j|>1$,
%\item $e_ie_j=e_je_i$, if $|i-j|>1$,
\item $e_iy_j=y_je_i$,  if $j\ne i,i+1$,
\item $y_iy_j=y_jy_i$,  for $1\le i,j\le n$.
\end{enumerate}
  
    \item (Tangle relations)
\begin{enumerate}
\item $e_is_i=e_i=s_ie_i$,  for $1\le i\le n-1$,
\item $s_ie_{i+1}e_i=s_{i+1}e_i$, and  $e_ie_{i+1}s_i=e_i s_{i+1}$,  for $1\le i\le n-2$,
\item $e_{i+1}e_is_{i+1}=e_{i+1}s_i$,  and $s_{i+1}e_ie_{i+1}=s_i e_{i+1}$,  for $1\le i\le n-2$.
\item  $e_{i+1}e_ie_{i+1}=e_{i+1}$,  and 
        $e_ie_{i+1}e_i=e_i$, for  % 
        $1\le i\le n-2$.
\end{enumerate}
  \item (Skein relations)\ \ 
        $s_iy_i-y_{i+1}s_i=e_i-1$,  and\ 
        $y_is_i-s_iy_{i+1}=e_i-1$,\  for % 
        $1\le i<n$.

    \item (Anti--symmetry relations)\ \ 
        $e_i(y_i+y_{i+1})=0$,  and         $(y_i+y_{i+1})e_i=0$, for %  
        $1\le i<n$.
\end{enumerate}
\end{definition}

\begin{definition}[\cite{ariki-mathas-rui}] \label{cyclo NZ}
Fix an integer $r\ge1$ and elements $u_1,\dots,u_r $ in $S$.     
The {\em degenerate cyclotomic  BMW  algebra} $\degbmw{n, S, r}=\degbmw{n, S, r}(\Omega; u_1, \dots, u_r)$ is the quotient of the  degenerate affine BMW algebra $\affdegbmw {n, S}(\Omega)$ by the 
cyclotomic relation  $(y_1-u_1)\dots(y_1-u_r) = 0.$
\end{definition}

\medskip
Note that, due to the symmetry of the relations,  $\affdegbmw {n, S}$  has a unique $S$--linear  algebra involution $*$
(that is, an algebra anti-automorphism of order $2$)    
 such that $e_i^* = e_i$,  $s_i^* = s_i$,  and $y_i^* = y_i$   for all $i$.  The involution passes to cyclotomic quotients.  

 \subsection{Definition of  affine and cyclotomic BMW algebras}
 
Fix an integer $n \ge 0$, and a commutative ring 
$S$  with invertible elements $\rho$ and $q$, and a sequence of elements $\Omega = (\omega_a)_{a \ge 0}$,  satisfying
\begin{equation} \label{equation:  basic relation in ground ring}
\rho\inv - \rho=   (q\inv -q) (\omega_0 - 1).
\end{equation}

\begin{definition}[\cite{H-O2}] \label{definition:  cyclotomic BMW}
The {\em affine BMW algebra}  $\abmw{n, S} = \abmw{n, S}(\rho, q, \Omega)$  is the unital associative $S$--algebra
with generators $y_1^{\pm 1}$, $g_i^{\pm 1}$  and
$e_i$ ($1 \le i \le n-1$) and relations:
\begin{enumerate}
\item (Inverses) $g_i g_i\inv = g_i\inv g_i = 1$ and 
$y_1 y_1\inv = y_1\inv y_1= 1$.

\item (Affine braid relations) 
\begin{enumerate}
\item $g_i g_{i+1} g_i = g_{i+1} g_ig_{i+1}$ and 
$g_i g_j = g_j g_i$ if $|i-j|  \ge 2$.
\item $y_1 g_1 y_1 g_1 = g_1 y_1 g_1 y_1$ and $y_1 g_j =
g_j y_1 $ if $j \ge 2$.
\end{enumerate}
\item (Idempotent relation) $e_i^2 = \omega_0 e_i$.
\item (Compression relations) For $j \ge 1$, $e_1 y_1^{ j} e_1 = \omega_j e_1$. 
\item (Commutation relations) 
\begin{enumerate}
\item $g_i e_j = e_j g_i$  and
$e_i e_j = e_j e_i$  if $|i-
j|
\ge 2$. 
\item $y_1 e_j = e_j y_1$ if $j \ge 2$.
\end{enumerate}
\item (Tangle relations)
\begin{enumerate}
\item $g_i e_i = e_i g_i = \rho \inv e_i$
 and $e_i g_{i \pm 1} e_i = \rho  e_i$.
\item $e_i e_{i\pm 1} e_i = e_i$,
\item $g_i g_{i\pm 1} e_i = e_{i\pm 1} e_i$ and
$ e_i  g_{i\pm 1} g_i=   e_ie_{i\pm 1}$. 
\end{enumerate}
\item (Kauffman skein relation)  $g_i - g_i\inv = (q - q\inv)(1- e_i)$.
\item (Unwrapping relation) $e_1 y_1 g_1 y_1 g_1 = e_1 = g_1 y_1 
g_1 y_1 e_1$.
\end{enumerate}
\end{definition}

 \begin{definition}[\cite{H-O2}] \label{cyclo BMW}
Fix an integer $r\ge1$ and invertible elements $u_1,\dots,u_r $ in $S$,  
The {\em  cyclotomic  BMW  algebra} $\bmw{n, S, r}=\bmw{n, S, r}(\rho, q, \Omega; u_1, \dots, u_r)$ is the quotient of the affine BMW algebra $\abmw{n, S}(\rho, q, \Omega)$ by the cyclotomic relation
$(y_1-u_1)\dots(y_1-u_r) = 0.$
\end{definition}
 
 \medskip
As in the degenerate case,  $\abmw{n, S}$  has a unique $S$--linear  algebra involution $*$
such that $e_i^* = e_i$ and  $g_i^* = g_i$,   for all $i$,   and $y_1^* = y_1$.  The involution passes to cyclotomic quotients.

 \subsection{Admissibility}
 
 \begin{notation}
 Let $\both{n, S, r}$  denote either the cyclotomic BMW algebra $\bmw{n,S, r}$ 
 (with parameters $\rho$, $q$,  $\Omega = (\omega_a)_{a \ge 0}$, and $u_1, \dots, u_r$)  or 
  the degenerate cyclotomic BMW algebra $\degbmw{n, S, r}$   (with parameters  $\Omega = (\omega_a)_{a \ge 0}$ and $u_1, \dots, u_r$) over a commutative ring $S$.
 Let 
 \begin{equation} \label{equation:  relate a's to u's}
 p(u) = (u - u_1) \cdots (u-u_r) = \sum_{j  =0}^r a_j u^j.
 \end{equation}
    The coefficients $a_j$  for $j < r$  are signed elementary symmetric functions in $u_1, \dots, u_r$,  
   namely  \break $a_j =  (-1)^{r-j} \varepsilon_{r-j}(u_1, \dots, u_r)$, 
    and $a_r = 1$.   
 \end{notation}
 
 \begin{lemma}  The left ideal $\both{2, S, r}\,  e_1$  in $\both{2, S, r}$  is equal to the $S$--span of  
 $\{e_1, y_1 e_1, \dots, y_1^{r-1} e_1 \}$.  
 \end{lemma}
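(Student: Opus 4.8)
The plan is to show that the $S$--submodule $M = \spn\{e_1, y_1 e_1, \dots, y_1^{r-1} e_1\}$ is a left ideal of $\both{2, S, r}$. Since $M \subseteq \both{2, S, r}\, e_1$ trivially, $M$ contains $e_1$, and $\both{2, S, r}\, e_1$ is the smallest left ideal containing $e_1$, this forces $\both{2, S, r}\, e_1 = M$. Thus everything reduces to checking that $M$ is stable under left multiplication by each generator of $\both{2, S, r}$; for $n = 2$ these are $e_1, y_1^{\pm 1}, g_1^{\pm 1}$ in the cyclotomic BMW case, and $e_1, y_1, y_2, s_1$ in the degenerate case.

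Stability under the ``abelian'' generators is immediate from the relations. Left multiplication by $e_1$ is absorbed by the idempotent relation $e_1^2 = \omega_0 e_1$ and the compression relations $e_1 y_1^k e_1 = \omega_k e_1$, so $e_1 M \subseteq S e_1 \subseteq M$. Left multiplication by $y_1$ preserves $M$ once one uses the cyclotomic relation to rewrite $y_1^r e_1$ in terms of lower powers; in the cyclotomic BMW case the constant term of $p(u)$ is $a_0 = (-1)^r u_1 \cdots u_r$, which is invertible, so the same relation expresses $y_1\inv$ as a polynomial in $y_1$ and hence $y_1\inv M \subseteq M$. In the degenerate case the anti--symmetry relation gives $y_2 e_1 = -y_1 e_1$, and since the $y_i$ commute, $y_2(y_1^k e_1) = -y_1^{k+1} e_1 \in M$ (again using the cyclotomic relation when $k = r-1$).

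The one substantive step is stability under $s_1$ (degenerate) or $g_1^{\pm 1}$ (cyclotomic BMW), which I would establish by induction on $k$, for $0 \le k \le r-1$, showing $s_1 y_1^k e_1 \in M$, resp.\ $g_1 y_1^k e_1 \in M$. In the degenerate case the skein relation rewrites $s_1 y_1 = y_2 s_1 + e_1 - 1$, so that
\[
s_1 y_1^k e_1 = y_2\bigl(s_1 y_1^{k-1} e_1\bigr) + \omega_{k-1}\, e_1 - y_1^{k-1} e_1 ,
\]
with base case $s_1 e_1 = e_1$; feeding in the inductive expression for $s_1 y_1^{k-1} e_1$ and using $y_2(y_1^j e_1) = -y_1^{j+1} e_1$ keeps the right--hand side in $M$. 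In the cyclotomic BMW case, set $Y_2 = g_1 y_1 g_1$; the affine braid relation says exactly that $Y_2$ commutes with $y_1$, while the unwrapping relation $g_1 y_1 g_1 y_1 e_1 = e_1$ together with $g_1 e_1 = \rho\inv e_1$ yields $g_1 y_1 e_1 = \rho\, y_1\inv e_1$ and $Y_2 e_1 = y_1\inv e_1$. Then writing $g_1 y_1 = Y_2 g_1\inv$ and using the Kauffman skein relation to trade $g_1\inv y_1^{k-1} e_1$ for $g_1 y_1^{k-1} e_1$ modulo $M$ (the compression relation disposing of $e_1 y_1^{k-1} e_1$), one gets $g_1 y_1^k e_1 = Y_2\bigl(g_1\inv y_1^{k-1} e_1\bigr) \in Y_2 \cdot M \subseteq M$, since $Y_2(y_1^j e_1) = y_1^{j-1} e_1$ and $y_1\inv e_1 \in M$; finally $g_1\inv y_1^k e_1 \in M$ follows by one more application of the skein relation. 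This exhausts the generators, so $M$ is a left ideal and the lemma follows. I expect the only delicate point to be this last computation in the cyclotomic BMW case: with no generator $y_2$ available one must route everything through $Y_2 = g_1 y_1 g_1$ and combine the skein and unwrapping relations with some care, whereas every other step is a direct reading of the defining relations.
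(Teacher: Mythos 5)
Your proposal is correct and follows exactly the paper's strategy: the paper's proof is the one-line remark that "it is easy to check using the relations that the $S$--span of $\{e_1, y_1 e_1, \dots, y_1^{r-1} e_1\}$ is invariant under multiplication by the generators on the left," and you have simply carried out that check in full (the reductions via the compression, anti--symmetry, skein, and unwrapping relations, including the $g_1^{\pm1}$ case routed through $Y_2 = g_1 y_1 g_1$, are all accurate).
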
 
 
 \begin{proof}  For both the cyclotomic and degenerate cyclotomic BMW algebras, it is easy to check using the relations that the $S$--span of $\{e_1, y_1 e_1, \dots, y_1^{r-1} e_1 \}$ is invariant under multiplication by  the generators on the left.
 \end{proof}

 \begin{lemma} \label{lemma: periodicity of omegas when e1 is torsion free}
  Assume that $e_1$ is not a torsion element over $S$ in $\both{2, S, r}$.  Then the elements $\omega_j$,  $j  \ge 0$  satisfy the following recursion relation:
 \begin{equation} \label{equation:  periodicity condition on omega's}
  \sum_{j = 0}^r  a_j  \omega_{j + \ell}  = 0, \text{  for all } \ell  \ge 0.
  \end{equation}
 \end{lemma}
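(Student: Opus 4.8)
The plan is to feed the defining cyclotomic relation into the compression relation. Set $p(y) = (y-u_1)\cdots(y-u_r) = \sum_{j=0}^r a_j y^j$ as in \refc{equation:  relate a's to u's}, so that $p(y_1) = 0$ in $\both{2,S,r}$ by Definition \ref{cyclo NZ} (resp.\ Definition \ref{cyclo BMW}). For a fixed $\ell \ge 0$ I would multiply this relation on the left by $e_1 y_1^\ell$ and on the right by $e_1$: since $y_1^\ell p(y_1) = 0$, we get $e_1 y_1^\ell p(y_1) e_1 = 0$, which upon expanding by $S$-linearity reads $\sum_{j=0}^r a_j\, e_1 y_1^{j+\ell} e_1 = 0$ in $\both{2,S,r}$.

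The next step is to replace each term $e_1 y_1^{j+\ell} e_1$ by $\omega_{j+\ell} e_1$. For $j+\ell > 0$ this is immediate from the compression relation; for $j+\ell = 0$, which happens only when $j = \ell = 0$, the same equality $e_1 y_1^0 e_1 = \omega_0 e_1$ follows instead from the idempotent relation $e_1^2 = \omega_0 e_1$. Hence the identity above becomes $\bigl(\sum_{j=0}^r a_j \omega_{j+\ell}\bigr) e_1 = 0$ in $\both{2,S,r}$.

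Finally, by hypothesis $e_1$ is not a torsion element over $S$, i.e.\ the map $c \mapsto c\,e_1$ is injective on $S$; so $\sum_{j=0}^r a_j \omega_{j+\ell} = 0$, and since $\ell \ge 0$ was arbitrary this is exactly \refc{equation:  periodicity condition on omega's}. The argument treats the cyclotomic and degenerate cyclotomic cases simultaneously, as it only uses the idempotent, compression, and cyclotomic relations, which have the same form in both. I do not anticipate any real obstacle; the one place to be slightly careful is the boundary exponent $j = \ell = 0$, where the value of $e_1 y_1^0 e_1$ must be read off from the idempotent relation, the compression relation being stated only for positive exponents.
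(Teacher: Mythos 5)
Your proposal is correct and follows exactly the paper's argument: multiply the cyclotomic relation by $y_1^\ell$, sandwich with $e_1$ on both sides, apply the compression relation (and the idempotent relation for the exponent-zero term), and use that $e_1$ is torsion-free to cancel it. The extra care at the boundary case $j=\ell=0$ is a welcome detail the paper leaves implicit.
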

 
 \begin{proof}
 Multiply  the cyclotomic condition: $\sum_{j = 0}^r  a_j  y_1^j = 0$  by $y_1^\ell$, and then multiply from both sides by $e_1$.  Use the compression and idempotent relations to obtain    
 $\sum_{j = 0}^r  a_j  \omega_{j + \ell}  e_1 = 0$.  Since $e_1$ is not a torsion element over $S$,  the result follows. 
 \end{proof}
 
\begin{definition} Consider the cyclotomic or degenerate cyclotomic BMW algebras over a commutative ring $S$ with suitable parameters.  We say that the parameters are {\em admissible}
if $\{e_1, y_1 e_1, \dots,  y_1^{r-1} e_1 \}$ is linearly independent over $S$ in 
$\both{2, S, r}$.   
\end{definition} 

For both the cyclotomic and degenerate cyclotomic BMW algebras, admissibility as defined above translates into explicit conditions on the parameters.  We review this for the two classes of algebras separately in the following two sections.   
 
 \section{Admissibility for degenerate cyclotomic BMW algebras} 
 \label{section: admissibility in the degenerate case}
 Consider the degenerate cyclotomic BMW algebras $\degbmw{n, S,r}$
 with parameters $\Omega = (\omega_a)_{a \ge 0}$ and 
 $u_1, \dots, u_r$ over a  commutative ring $S$.  Define $a_0, \dots, a_{r-1}$  by (\ref{equation:  relate a's to u's}).

 \begin{lemma}[\cite{goodman-degenerate-admissibility}, Lemma 4.1]
  \label{lemma: linear independence implies admissibility}
Suppose that $\{ e_1, y_1 e_1, \dots, y_1^{r-1} e_1\}$ is linearly independent over $S$ in 
$\degbmw{2, S, r}$.  Then the parameters  satisfy the following relations:
\begin{equation}  \label{equation:  admissibility relation degenerate}
 \sum_{\mu = 0}^{r-j-1}  \omega_\mu  a_{\mu + j +1}  = - 2 \delta_{(\text{$r-j$ is odd})}\  a_j  + \delta_{(\text{$j$ is even})} \  a_{j+1},
\end{equation}
for $0 \le j \le r-1$. 
 \end{lemma}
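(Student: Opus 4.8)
\emph{Proof strategy.}
The plan is to make explicit how the braid generator $s_1$ acts, by left multiplication, on the left ideal $\degbmw{2, S, r}\, e_1$ --- which, by the preceding lemmas together with the linear independence hypothesis, has $\{e_1, y_1 e_1, \dots, y_1^{r-1} e_1\}$ as an $S$--basis --- and then to apply $s_1$ to the cyclotomic relation $\sum_{j=0}^{r} a_j\, y_1^j e_1 = 0$ and read off the coordinates of the result in this basis. Since (\ref{equation:  admissibility relation degenerate}) is a linear relation among the $\omega_\mu$ and the $a_\mu$, and the $a_\mu$ already enter through the cyclotomic relation, this should produce exactly the asserted identities.

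First I would prove, by induction on $k$ using the skein relation $s_1 y_1 = y_2 s_1 + e_1 - 1$ and the commutativity of the $y$'s, the identity
\[
s_1 y_1^k = y_2^k s_1 + \sum_{i=0}^{k-1} y_2^i (e_1 - 1) y_1^{k-1-i}
\]
in $\affdegbmw{2, S}$. Multiplying on the right by $e_1$ and simplifying --- using $s_1 e_1 = e_1$, the idempotent and compression relations in the uniform form $e_1 y_1^m e_1 = \omega_m e_1$ for $m \ge 0$, and the anti--symmetry relation $(y_1 + y_2) e_1 = 0$, which gives $y_2^i e_1 = (-1)^i y_1^i e_1$ and hence $y_2^i y_1^m e_1 = (-1)^i y_1^{m+i} e_1$ --- one obtains the closed formula
\[
s_1 y_1^k e_1 = (-1)^k y_1^k e_1 + \sum_{i=0}^{k-1} (-1)^i \omega_{k-1-i}\, y_1^i e_1 \; - \; \delta_{(\text{$k$ is odd})}\, y_1^{k-1} e_1 ,
\]
where the last coefficient arises as $\sum_{i=0}^{k-1} (-1)^i = \delta_{(\text{$k$ is odd})}$. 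For $k \le r - 1$ the right--hand side already lies in the chosen basis of $\degbmw{2,S,r}\, e_1$, and for $k = r$ the only out--of--range term is $(-1)^r y_1^r e_1$.

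Next I would start from $\sum_{j=0}^{r} a_j\, y_1^j e_1 = 0$ (the cyclotomic relation multiplied on the right by $e_1$, with $a_r = 1$), apply $s_1$ on the left to get $\sum_{j=0}^{r} a_j\, s_1 y_1^j e_1 = 0$, substitute the closed formula above, and use $y_1^r e_1 = -\sum_{\ell=0}^{r-1} a_\ell\, y_1^\ell e_1$ to eliminate the single term $a_r (-1)^r y_1^r e_1$. The result is an $S$--linear combination of $e_1, y_1 e_1, \dots, y_1^{r-1} e_1$ equal to $0$, so by linear independence every coefficient vanishes. Extracting the coefficient of $y_1^j e_1$ for $0 \le j \le r - 1$: the first sum (after eliminating $y_1^r e_1$) contributes $a_j\big[(-1)^j - (-1)^r\big] = 2(-1)^j a_j\, \delta_{(\text{$r-j$ is odd})}$, the $\omega$--terms contribute $(-1)^j \sum_{\mu = 0}^{r-j-1} a_{\mu + j + 1}\, \omega_\mu$, and the remaining terms contribute $- a_{j+1}\, \delta_{(\text{$j$ is even})}$; setting the total equal to $0$ and dividing by $(-1)^j$ yields precisely (\ref{equation:  admissibility relation degenerate}).

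\emph{Main obstacle.} The computation is essentially mechanical once the closed formula for $s_1 y_1^k e_1$ is in hand, so I expect that formula to be the main step. The only delicate points afterwards are the bookkeeping of the parity indicators and the observation that $(-1)^j - (-1)^r$ equals $2(-1)^j$ exactly when $r - j$ is odd and vanishes otherwise.
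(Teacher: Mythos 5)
Your argument is correct, and it is essentially the argument of the cited source (the paper itself gives no proof here, quoting the result from \cite{goodman-degenerate-admissibility}, Lemma 4.1, where the proof likewise computes $s_1y_1^k e_1$ in the basis $\{y_1^ie_1\}$ via the skein and anti--symmetry relations and then applies $s_1$ to $p(y_1)e_1=0$). Your closed formula for $s_1y_1^ke_1$, the elimination of $y_1^re_1$ via the cyclotomic relation, and the coefficient extraction all check out, including the parity bookkeeping.
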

 
 We are going to show that  admissibility (i.e.\  linear independence of $\{ e_1, y_1 e_1, \dots, y_1^{r-1} e_1\}$)  is equivalent to the parameters satisfying conditions (\ref{equation:  periodicity condition on omega's})  and (\ref{equation:  admissibility relation degenerate}).

 \begin{lemma}[\cite{goodman-degenerate-admissibility}, Lemma 4.4]  \label{corollary: universal polynomials degenerate admissibility} 
 There exist universal polynomials $H_a(\ubold_1, \dots, \ubold_r)$ for $a \ge 0$,  symmetric in $\ubold_1, \dots \ubold_r$,    with integer coefficients, such that whenever
$S$ is a commutative ring with   parameters \ $\Omega = (\omega_a)_{a \ge 0}$  and 
 $u_1, \dots, u_r$ satisfying (\ref{equation:  periodicity condition on omega's})  and (\ref{equation:  admissibility relation degenerate}), one has 
\begin{equation}\label{equation:  universal polynomials for degenerate admissibility}
\omega_a =  H_a(u_1, \dots, u_r) \quad  \text{  for  }  a \ge 0. 
\end{equation}
    Conversely, if $\omega_a =  H_a(u_1, \dots, u_r)$
  for $a \ge 0$,  then the parameters satisfy (\ref{equation:  periodicity condition on omega's})  and (\ref{equation:  admissibility relation degenerate}).
\end{lemma}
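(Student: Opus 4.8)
The plan is to solve the constraints \eqref{equation:  periodicity condition on omega's} and \eqref{equation:  admissibility relation degenerate} for the $\omega_a$ symbolically, over the polynomial ring $\Z[\ubold_1, \dots, \ubold_r]$, and to observe that the solution exists, is uniquely determined, and is given by symmetric polynomials with integer coefficients.

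First I would isolate the shape of the admissibility relations. In \eqref{equation:  admissibility relation degenerate} the index $j$ runs over $0 \le j \le r-1$ and the summation index satisfies $\mu \le r-j-1 \le r-1$, so only the unknowns $\omega_0, \dots, \omega_{r-1}$ occur; thus \eqref{equation:  admissibility relation degenerate} is an $r\times r$ linear system $M\,(\omega_0, \dots, \omega_{r-1})^{t} = b$, where $M_{j\mu} = a_{\mu+j+1}$ if $\mu + j + 1 \le r$ and $M_{j\mu}=0$ otherwise, and $b_j = -2\,\delta_{(\text{$r-j$ is odd})}\, a_j + \delta_{(\text{$j$ is even})}\, a_{j+1}$. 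Since $a_r = 1$, the entry $M_{j,\,r-1-j}$ equals $1$ while $M_{j\mu}=0$ whenever $j+\mu > r-1$; thus, after reversing the order of the columns, $M$ becomes an integer-coefficient triangular matrix with $1$'s on the diagonal, so $\det M = \pm 1$. Hence $M$ is invertible over $\Z[\ubold_1, \dots, \ubold_r]$ and, by the adjugate formula, $M^{-1}$ has entries in $\Z[\ubold_1, \dots, \ubold_r]$; moreover $M$, $\det M$, and $b$ are all invariant under permutations of the $\ubold_i$ (they are built from the $a_j$), so the coordinates of $M^{-1}b$ are symmetric integer polynomials. I would set $H_\mu(\ubold_1, \dots, \ubold_r) := (M^{-1}b)_\mu$ for $0 \le \mu \le r-1$ and then, for $a \ge r$, define $H_a$ recursively by $H_a := -\sum_{j=0}^{r-1} a_j\, H_{a-r+j}$; by induction each $H_a$ is a symmetric polynomial with integer coefficients, and this recursion is exactly \eqref{equation:  periodicity condition on omega's} at $\ell = a-r$ (using $a_r = 1$).

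Both implications are then formal. Suppose $S$ carries parameters $\Omega$ and $u_1,\dots,u_r$ satisfying \eqref{equation:  periodicity condition on omega's} and \eqref{equation:  admissibility relation degenerate}. Specializing $\ubold_i \mapsto u_i$, the tuple $(\omega_0,\dots,\omega_{r-1})$ satisfies $M\,x = b$ over $S$; since $\det M = \pm 1$ is a unit in $S$, this forces $\omega_\mu = (M^{-1}b)_\mu = H_\mu(u_1,\dots,u_r)$ for $\mu < r$. Then \eqref{equation:  periodicity condition on omega's}, read as $\omega_{r+\ell} = -\sum_{j=0}^{r-1} a_j\,\omega_{j+\ell}$, together with the matching recursion for the $H_a$, gives $\omega_a = H_a(u_1,\dots,u_r)$ for all $a$ by induction on $a$. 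Conversely, if $\omega_a = H_a(u_1,\dots,u_r)$ for all $a$, then \eqref{equation:  admissibility relation degenerate} holds because $(H_0,\dots,H_{r-1})$ was defined to be $M^{-1}b$, and \eqref{equation:  periodicity condition on omega's} holds because for $\ell \ge 0$ its left side equals $H_{r+\ell} + \sum_{j=0}^{r-1} a_j\,H_{j+\ell}$, which vanishes by the definition of $H_{r+\ell}$.

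The one substantive observation is that $a_r = 1$ makes the admissibility matrix $M$ anti-triangular with a unit anti-diagonal, hence invertible over $\Z[\ubold_1, \dots, \ubold_r]$; the rest is an induction and bookkeeping with elementary symmetric functions. I expect the only delicate point in writing this up to be stating the index ranges in \eqref{equation:  admissibility relation degenerate} precisely enough to pin down the shape of $M$.
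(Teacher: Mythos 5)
Your proposal is correct and follows essentially the same route as the paper: both hinge on the observation that, because $a_r=1$, the system \eqref{equation:  admissibility relation degenerate} is a unitriangular (anti-triangular) linear system in $\omega_0,\dots,\omega_{r-1}$ with coefficients in the signed elementary symmetric functions, hence uniquely solvable over $\Z[\ubold_1,\dots,\ubold_r]$, after which \eqref{equation:  periodicity condition on omega's} defines $H_a$ for $a\ge r$ recursively. The paper reverses the order of the equations where you reverse the columns, and you spell out the two implications slightly more explicitly than the paper's ``the converse is obvious,'' but the content is identical.
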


\begin{proof}   The system of relations (\ref{equation:  admissibility relation degenerate}) is a unitriangular linear system of equation for the variables $\omega_0,  \dots, \omega_{r-1}$.  In fact, if we list the equations in reverse order
then the matrix of coefficients is
$$
\begin{bmatrix}
1 & & & &\\
a_{r-1} & 1 & && \\
a_{r-2} & a_{r-1} & 1 & &\\
\vdots  & & \ddots& \ddots & \\
a_1   &a_2  &\dots&a_{r-1}& 1\\
\end{bmatrix}.
$$
Solving the system for $\omega_0,  \dots, \omega_{r-1}$  gives  these quantities as polynomial functions of 
$a_0,  \dots, a_{r-1}$,  thus symmetric polynomials in $u_1, \dots, u_r$.    The recursion relations
 $ \sum_{j = 0}^r  a_j  \omega_{j + m}  = 0$,  for all $m \ge 0$ yield (\ref{equation:  universal polynomials for degenerate admissibility})  for
 $a \ge r$.    The converse is obvious, since the $\omega_a$ given by (\ref{equation:  universal polynomials for degenerate admissibility}) are the solutions of the equations
  (\ref{equation:  periodicity condition on omega's})  and (\ref{equation:  admissibility relation degenerate}).
\end{proof}

 \subsection{The admissibility condition of Ariki, Mathas, and Rui}
 Ariki, Mathas and Rui used a different approach to admissibility for degenerate cyclotomic BMW algebras in their fundamental work~\cite{ariki-mathas-rui}.   Let $\ubold_1, \dots, \ubold_r$ and $t$  be algebraically independent indeterminants over $\Z$.   Define symmetric polynomials $q_a(\ubold)$  in $\ubold_1, \dots, \ubold_r$ by
$$\prod_{i=1}^r\frac{1+\ubold_it}{1-\ubold_it}=\sum_{a\ge0}q_a(\ubold)t^a.$$
The polynomials $q_a$  are known as {\em Schur  $q$--functions}.  Define
\begin{equation} \label{equation: definition of eta's}
\eta_{a}^{\pm}(\ubold) =  q_{a+1}(\ubold) \ \pm \  \frac{(-1)^{r-1}}{2} q_a(\ubold)  + \frac{1}{2} \delta_{a, 0}.
\end{equation}
for $a \ge 0$.    Then (cf.~\cite{ariki-mathas-rui}, Lemma 3.8)
\begin{equation} \label{equation: generating function of eta's}
\sum_{a \ge 0}  \eta_{a}^{\pm}(\ubold)    t^{-a} = (\frac{1}{2} - t)\  + \ 
              (t \pm \frac{(-1)^{r-1}}{2} )\prod_{i=1}^r\frac{t+u_i}{t-u_i},
\end{equation}
as one sees by expanding the series, using the definition of the Schur $q$--functions.
Ostensibly,  $\eta_a^{\pm}(\ubold)  \in \Z[1/2, \ubold_1, \dots, \ubold_r]$, but actually:

\begin{lemma}   \label{lemma:  why 1/2 is not needed} \mbox{}
\begin{enumerate}
\item \label{lemma:  why 1/2 is not needed item 1}
$q_0(\ubold) = 1$.
\item \label{lemma:  why 1/2 is not needed item 2}
For $a \ge 1$,  $q_a  \equiv 2 p_a(\ubold)  \mod  4 \Z[\ubold_1, \dots, \ubold_r]$, where
$p_a$ denotes the $a$--th power sum symmetric function.
\item \label{lemma:  why 1/2 is not needed item 3}
$\eta_a^{\pm}(\ubold)  \in \Z[\ubold_1, \dots, \ubold_r]$.
\end{enumerate}
\end{lemma}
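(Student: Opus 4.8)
The plan is to deduce all three items from the exponential generating function for the Schur $q$--functions. Taking the logarithmic derivative of the defining product $\prod_{i=1}^r \frac{1+\ubold_i t}{1-\ubold_i t} = \sum_{a\ge 0} q_a(\ubold)\,t^a$ gives
$$\frac{d}{dt}\log\Bigl(\sum_{a\ge 0}q_a(\ubold)\,t^a\Bigr) = \sum_{i=1}^r \frac{2\,\ubold_i}{1-\ubold_i^2 t^2} = 2\sum_{k\ge 0} p_{2k+1}(\ubold)\,t^{2k},$$
from which $\sum_{a\ge 0}q_a(\ubold)\,t^a = \exp(2g)$ follows by integration, with $g = \sum_{k\ge 1,\ k\text{ odd}}\tfrac1k\, p_k(\ubold)\,t^k$; here we use that $q_0(\ubold)=1$, which is immediate from the defining product at $t=0$ and is the first assertion. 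Since the coefficients of $g$ lie in the subring $\Z_{(2)}$ of $\Q$ obtained by inverting every odd integer, $\exp(2g) = \sum_{m\ge 0}\tfrac{2^m}{m!}\,g^m$ is a well--defined power series over $\Z_{(2)}[\ubold]$; and as each $q_a(\ubold)\in\Z[\ubold]$ by the defining product, a congruence modulo $4$ established over $\Z_{(2)}[\ubold]$ may be read back over $\Z[\ubold]$.

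For the second assertion the crucial arithmetic fact is that the $2$--adic valuation of $2^m/m!$ equals $s_2(m)$, the number of $1$'s in the binary expansion of $m$, and that this is $\ge 2$ unless $m=0$ or $m$ is a power of $2$. So modulo $4$ only the terms with $m=0$ and $m=2^j$ contribute:
$$\exp(2g)\ \equiv\ 1 + 2\Bigl(g + \sum_{j\ge 1} c_j\, g^{2^j}\Bigr) \pmod 4, \qquad c_j = \frac{2^{2^j-1}}{(2^j)!}\in\Z_{(2)}^{\times}.$$
Now $k^{-1}\equiv 1\pmod 2$ for odd $k$, so $g\equiv\sum_{k\text{ odd}} p_k(\ubold)\,t^k\pmod 2$, and iterating the Frobenius congruence $p_k(\ubold)^2\equiv p_{2k}(\ubold)\pmod 2$ gives $g^{2^j}\equiv\sum_{k\text{ odd}} p_{2^j k}(\ubold)\,t^{2^j k}\pmod 2$. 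Since each $c_j$ is odd, the parenthesized sum above is congruent mod $2$ to $\sum_{j\ge 0} g^{2^j}$, and summing the last congruences over $j\ge 0$ --- every integer $\ell\ge 1$ being uniquely $2^j k$ with $k$ odd --- yields $\sum_{j\ge 0}g^{2^j}\equiv\sum_{\ell\ge 1}p_\ell(\ubold)\,t^\ell\pmod 2$. Hence $\exp(2g)\equiv 1 + 2\sum_{\ell\ge 1}p_\ell(\ubold)\,t^\ell\pmod 4$, i.e.\ $q_a(\ubold)\equiv 2p_a(\ubold)\pmod{4\Z[\ubold]}$ for all $a\ge 1$, which is the second assertion.

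The third assertion follows quickly from the first two. For $a\ge 1$, we have $q_{a+1}(\ubold)\in\Z[\ubold]$ and, by the second assertion, $\tfrac12 q_a(\ubold)\equiv p_a(\ubold)\pmod 2$ lies in $\Z[\ubold]$; hence $\eta_a^{\pm}(\ubold)=q_{a+1}(\ubold)\pm\tfrac{(-1)^{r-1}}{2}q_a(\ubold)\in\Z[\ubold]$. For $a=0$, $q_0(\ubold)=1$ and $q_1(\ubold)=2p_1(\ubold)\in\Z[\ubold]$, so $\eta_0^{\pm}(\ubold)=q_1(\ubold)+\tfrac12\bigl(1\pm(-1)^{r-1}\bigr)$, which lies in $\Z[\ubold]$ since $1\pm(-1)^{r-1}\in\{0,2\}$.

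I expect the second assertion to be the only genuine obstacle. It is tempting to argue merely that $q_a(\ubold)$ is divisible by $2$ for $a\ge 1$, but that does not pin down $q_a$ modulo $4$; the point is that one must keep exactly the terms of $\exp(2g)$ indexed by $m=2^j$ (whose coefficients $2^m/m!$ have $2$--adic valuation precisely $1$) and then collapse the family $\{g^{2^j}\}_{j\ge 0}$ down to $\sum_{\ell\ge 1}p_\ell(\ubold)\,t^\ell$ using the Frobenius congruence on power sums.
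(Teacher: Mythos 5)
Your proof is correct, but it takes a substantially heavier route than the paper's. The paper argues directly from the product: writing each factor as $\frac{1+vt}{1-vt}=1+2vt+2v^2t^2+\cdots$, the expansion of $\prod_{i=1}^r\frac{1+\ubold_it}{1-\ubold_it}$ consists of the term $1$, the ``single--factor'' terms $2\ubold_i^at^a$ (which sum to $2p_a(\ubold)t^a$), and cross terms involving at least two factors, each of which carries a factor $2\cdot 2=4$; this gives part (2) in two lines, and parts (1) and (3) follow as in your argument. Your route instead passes to $\exp(2g)$ via the logarithmic derivative, uses Legendre's formula to see that $v_2(2^m/m!)=s_2(m)$ so that only $m=0$ and $m=2^j$ survive modulo $4$, and then collapses the powers $g^{2^j}$ to $\sum_{\ell\ge 1}p_\ell(\ubold)t^\ell$ via the Frobenius congruence $p_k^2\equiv p_{2k}\pmod 2$ and the unique factorization $\ell=2^jk$ with $k$ odd. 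Every step checks out, including the passage from a congruence in $\Z_{(2)}[\ubold]$ back to one in $\Z[\ubold]$ and the separate treatment of $a=0$ in part (3). What your approach buys is a structural explanation of the congruence (it exhibits $\log\sum q_at^a$ as twice an odd power--sum series and locates precisely which terms of the exponential are $2$--adically small), but for the statement at hand the paper's direct expansion is both shorter and avoids all localization and valuation arguments; you may want to note that the elementary expansion already isolates $2p_a$ modulo $4$ without any need to control the full exponential.
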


\begin{proof}  
Part \refc{lemma:  why 1/2 is not needed item 1} is obvious. 
Using the identity:
$$
\frac{1 + vt}{1 - vt} = 1 + 2 vt(1 - vt)\inv = 1 + 2 vt + 2 v^2 t^2 + 2 v^3 t^3 + \dots,
$$
one sees that the coefficient of $t^a$ in  $\displaystyle \prod_{i=1}^r\frac{1+\ubold_it}{1-\ubold_it}$  
is $2 \sum_i  u_i^a$  plus  a sum of terms divisible by 4.  This gives  \refc{lemma:  why 1/2 is not needed item 2},  and \refc{lemma:  why 1/2 is not needed item 3} follows as well.  
\end{proof}  

\begin{example}  \label{example:  evaluation of eta's when characteristic is 2}
Consider a ring $S$ of characteristic 2 and $u_1, \dots, u_r \in S$.  Then
$q_a(u_1, \dots, u_r) = 0$  for $a \ge 1$,  but  $\frac{1}{2} q_a(u_1, \dots, u_r) = \sum_i u_i^a$; that is, 
we consider   $\frac{1}{2} q_a$ in $\Z[\ubold_1, \dots, \ubold_r]$, and then evaluate at $(u_1, \dots, u_r) \in S^r$.   Furthermore,   
$$\eta_0^+(u_1, \dots, u_r) = \delta_{(r \text{ is odd})} \quad \text{ and} \quad
\eta_a^+(u_1, \dots, u_r) =   p_a(u_1, \dots, u_r), $$
for $a \ge 1$.
\end{example}

\begin{definition}[\cite{ariki-mathas-rui}]  \label{definition: u admissibility for degenerate case}
Let $S$ be a commutative ring with parameters 
$\Omega = (\omega_a)_{a \ge 0}$
and $u_1, \dots, u_r$.    Say that the parameters are $(u_1, \dots, u_r)$--admissible, or that $\Omega$ is $(u_1, \dots, u_r)$--admissible,  if  for all $a \ge 0$,
\begin{equation} \label{equation: u admissibility relations}
 \omega_a = \eta_a^{+}(u_1, \dots, u_r).  
 \end{equation}
\end{definition}

\begin{lemma} {\rm (cf. \cite{goodman-degenerate-admissibility}, Lemma 5.1)}
\label{lemma:  equivalence of degenerate admissibility conditions 1}
\begin{enumerate}
\item  \label{lemma:  equivalence of degenerate admissibility conditions 1 item 1}
$\eta_a^+(\ubold_1, \dots, \ubold_r) =  H_a(\ubold_1, \dots, \ubold_r)$,  where $H_a$ are the polynomials in Lemma \ref{corollary: universal polynomials degenerate admissibility}.
\item  \label{lemma:  equivalence of degenerate admissibility conditions 1 item 2}
Let $S$ be a commutative ring with parameters 
$\Omega = (\omega_a)_{a \ge 0}$
and $u_1, \dots, u_r$.     The
parameters are  $(u_1, \dots, u_r)$--admissible if and only if they 
 satisfy (\ref{equation:  periodicity condition on omega's})  and (\ref{equation:  admissibility relation degenerate}).
\end{enumerate}
\end{lemma}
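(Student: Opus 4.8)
The plan is to prove the first statement --- the polynomial identity $\eta_a^+ = H_a$ --- and then read off the second from it together with Lemma~\ref{corollary: universal polynomials degenerate admissibility}. For the first statement I would work over the ring $R = \Z[\ubold_1, \dots, \ubold_r]$, with the ``universal'' parameters $\omega_a := \eta_a^+(\ubold)$ and the indeterminates $\ubold_i$ themselves in the role of the $u_i$. By the forward direction of Lemma~\ref{corollary: universal polynomials degenerate admissibility} it then suffices to check that these $\eta_a^+$ satisfy, as identities in $R$, both the periodicity relations \eqref{equation:  periodicity condition on omega's} and the admissibility relations \eqref{equation:  admissibility relation degenerate}; the lemma forces $\eta_a^+ = H_a$ for every $a \ge 0$ once that is done. (Equivalently, one may invoke directly the uniqueness visible in the proof of that lemma: \eqref{equation:  admissibility relation degenerate} is a unitriangular system pinning down $\eta_0^+, \dots, \eta_{r-1}^+$, and \eqref{equation:  periodicity condition on omega's} then determines the rest.)

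To run this verification I would set $p(t) = \prod_{i=1}^r (t - \ubold_i) = \sum_{j=0}^r a_j t^j$ and $\bar p(t) = \prod_{i=1}^r (t + \ubold_i) = \sum_{j=0}^r (-1)^{r-j} a_j\, t^j$, and multiply the generating--function identity \eqref{equation: generating function of eta's} (with the upper sign) through by $p(t)$. This clears the denominator $\prod_i(t-u_i)$ and gives
\[
p(t)\sum_{a \ge 0}\eta_a^+(\ubold)\, t^{-a} \;=\; \Bigl(\tfrac12 - t\Bigr)p(t)\;+\;\Bigl(t + \tfrac{(-1)^{r-1}}{2}\Bigr)\bar p(t),
\]
whose right--hand side is a genuine polynomial in $t$. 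Now I compare coefficients. On the left, the coefficient of $t^{-\ell}$ (for $\ell\ge 0$) is $\sum_{j=0}^r a_j\, \eta_{j+\ell}^+$ and the coefficient of $t^{j+1}$ (for $0 \le j \le r-1$) is $\sum_{\mu=0}^{r-j-1}\eta_\mu^+\, a_{\mu+j+1}$ --- precisely the left--hand sides of \eqref{equation:  periodicity condition on omega's} and \eqref{equation:  admissibility relation degenerate}. On the right, since it is a polynomial there are no negative powers of $t$ and (one checks) the constant term cancels, which yields the periodicity relations for $\eta^+$; and expanding $(\tfrac12 - t)p(t) + (t + \tfrac{(-1)^{r-1}}{2})\bar p(t)$ with the two formulas above, the coefficient of $t^{j+1}$ collapses, after sorting out the parities of $r-j$ and of $j$, to $-2\,\delta_{(r-j\text{ is odd})}\,a_j + \delta_{(j\text{ is even})}\,a_{j+1}$, matching the right--hand side of \eqref{equation:  admissibility relation degenerate}. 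The one delicate point is exactly this sign--and--parity bookkeeping --- the interplay of the $(-1)^{r-j}$ in $\bar p$ with the two $\delta$--terms --- but it is a finite, mechanical computation, and I would sanity--check it for $r=1$ and $r=2$. This establishes both families of relations, hence $\eta_a^+ = H_a$ in $R$.

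For the second statement, let $S$ be a commutative ring with parameters $\Omega=(\omega_a)_{a\ge0}$ and $u_1,\dots,u_r$. By Definition~\ref{definition: u admissibility for degenerate case}, $\Omega$ is $(u_1,\dots,u_r)$--admissible exactly when $\omega_a = \eta_a^+(u_1,\dots,u_r)$ for all $a$; specializing the polynomial identity just proved, this is the same as $\omega_a = H_a(u_1,\dots,u_r)$ for all $a$; and by Lemma~\ref{corollary: universal polynomials degenerate admissibility} (both directions) this holds if and only if the parameters satisfy \eqref{equation:  periodicity condition on omega's} and \eqref{equation:  admissibility relation degenerate}. This completes the proof.
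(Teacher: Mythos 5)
Your proposal is correct and takes essentially the same route as the paper: part (1) is obtained by checking that the sequence $(\eta_a^+(\ubold))_{a\ge 0}$ satisfies \eqref{equation:  periodicity condition on omega's} and \eqref{equation:  admissibility relation degenerate} and then invoking the uniqueness in Lemma \ref{corollary: universal polynomials degenerate admissibility}, and part (2) follows formally from part (1), Definition \ref{definition: u admissibility for degenerate case}, and that lemma. The only difference is that the paper outsources the verification for part (1) to a citation, whereas you carry it out explicitly by clearing denominators in \eqref{equation: generating function of eta's} and comparing coefficients --- and your sign-and-parity bookkeeping does check out.
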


\begin{proof}  Part \refc{lemma:  equivalence of degenerate admissibility conditions 1 item 1} is proved in~\cite{goodman-degenerate-admissibility}, Section 5, by showing that
the sequence $(\eta_a^+(\ubold))_{a \ge 0}$  satisfies (\ref{equation:  periodicity condition on omega's})  and (\ref{equation:  admissibility relation degenerate}); that is, 
 \begin{equation} \label{equation:  periodicity condition on eta's}
  \sum_{j = 0}^r  \abold_j  \eta_{j + \ell}^+(\ubold)  = 0, \text{  for all } \ell  \ge 0, 
  \end{equation}
  and
  \begin{equation}  \label{equation:  admissibility relation degenerate for eta's}
 \sum_{\mu = 0}^{r-j-1}  \eta_\mu^+(\ubold) \  \abold_{\mu + j +1}  = - 2 \delta_{(\text{$r-j$ is odd})}\  \abold_j  + \delta_{(\text{$j$ is even})} \  \abold_{j+1},
\end{equation}
for $0 \le j \le r-1$,  where $\abold_j = (-1)^{r-j}  \varepsilon_{r-j}(\ubold_1, \dots, \ubold_r)$.
Part \refc{lemma:  equivalence of degenerate admissibility conditions 1 item 2}  follows from part \refc{lemma:  equivalence of degenerate admissibility conditions 1 item 1} together with Definition \ref{definition: u admissibility for degenerate case} and Lemma \ref{corollary: universal polynomials degenerate admissibility}.
\end{proof}

\subsection{Recovering the results of Ariki, Mathas, and Rui}
The main results of~\cite{ariki-mathas-rui} regarding degenerate cyclotomic BMW algebras are stated for ground rings $S$ in which $2$ is invertible.  
 The primary reason for this restriction on the ground ring was that it seemed to be needed in order to 
 use the quantities $\eta_a^{+}(u_1, \dots u_r)$, which play a central role in~\cite{ariki-mathas-rui}, via Definition \ref{definition: u admissibility for degenerate case}.
 Using Lemma \ref{lemma:  why 1/2 is not needed}, the restriction on the ground ring can be eliminated.  
We proceed to outline how the proofs have to be adjusted.

Let us define a universal  ring with $(u_1, \dots, u_r)$--admissible parameters.
Let $\ubold_1, \dots, \ubold_r$ be indeterminants over $\Z$.
Let $\mathcal Z = \Z[\ubold_1, \dots, \ubold_r]$;
define $\abold_j = (-1)^{j} \varepsilon_{r-j}(\ubold_1, \dots, \ubold_r)$  for $0 \le j \le r$, 
where $\varepsilon_k$ is the $k$--th elementary symmetric function, 
 and
define $\omegabold_a$ for $a \ge 0$ by 
\begin{equation}\label{equation:  universal polynomials for degenerate admissibility2}
\omegabold_a =  H_a(\ubold_1, \dots, \ubold_r) = \eta^+_a(\ubold_1, \dots, \ubold_r)  \text{  for  }  a \ge 0. 
\end{equation}
The parameters $\bm \Omega = (\omegabold_a)_{a \ge 0}$ and
$\ubold_1, \dots \ubold_r$  are  $(\ubold_1, \dots \ubold_r)$--admissible by definition.
(This is the same construction as in~\cite{ariki-mathas-rui}, page  105, except that we don't need to adjoin $1/2$ to the ring.) 
If $S$ is any
commutative ring with parameters $\Omega = (\omega_a)_{a \ge 0}$ and
$u_1, \dots,  u_r$, such that $\Omega$ is  $(u_1, \dots, u_r)$--admissible then there is a unique algebra homomorphism from $\mathcal Z$ to $S$ taking
$\ubold_j \mapsto u_j$. Since $\Omega$ is  $(u_1, \dots, u_r)$--admissible, it follows that $\omegabold_a \mapsto \omega_a$  for all $a \ge 0$.  For all $n \ge 0$,   we have
\begin{equation} \label{equation:  universal a admissible ring and specializations}
\degbmw{n, S, r}(\Omega; u_1, \dots, u_r)\ \cong \  \degbmw{n, \mathcal Z, r}(\bm \Omega; \ubold_1, \dots \ubold_r) \otimes_{\mathcal Z} S.
\end{equation}
 See~\cite{GH2}, Remark 3.4 for a justification.

 Let $S$ be any commutative ring with parameters $\Omega = (\omega_a)_{a \ge 0}$ and
$u_1, \dots,  u_r$ (with no conditions imposed on the parameters).   We recall a construction of a spanning set in $\degbmw{n} =  
\degbmw{n, S, r}(\Omega; u_1, \dots, u_r)$  from~\cite{ariki-mathas-rui}.  
 Remark that there is a homomorphism
 from the Brauer algebra $\mathcal B_n(\omega_0)$  with parameter 
 $\omega_0$  to $\degbmw{n, S, r}$  taking $s_i \mapsto s_i$  and $e_i \mapsto e_i$;  this follows from the presentation of the Brauer algebra cited in~\cite{ariki-mathas-rui},  Proposition 2.7.   
  For a Brauer diagram $\gamma$,  we will also write $\gamma$ for the image of $\gamma$ in $\degbmw{n, S, r} $.  The  ``$r$--regular monomials" in  $\degbmw{n, S, r}$ are defined to be  the elements 
  \begin{equation} \label{spanning elements of N_n}
 y^{\bm p}  \gamma  y^{\bm q},
 \end{equation}
 where $\gamma$ is a Brauer diagram,   $ y^{\bm p} = {y_1}^{p_1} \cdots {y_n}^{p_n}$,  and
 $ y^{\bm q} = {y_1}^{q_1} \cdots {y_n}^{q_n}$;  moreover,     $p_i$ and $q_i$ are non-negative integers,  in the interval $0, 1, \dots, r-1$,  and
 $p_i = 0$ unless the $i$-th vertex at the bottom of $\gamma$ is the left endpoint of a horizontal strand,  and 
 $q_i = 0$ unless the $i$-th vertex at the top of $\gamma$ is either the left endpoint of a horizontal strand, or the top endpoint of a vertical strand. Note that there are at most $n$ strictly positive exponents $p_i$ or $q_i$, and the number of $r$--regular monomials is $r^n (2n-1)! !$.

  \begin{proposition}[\cite{ariki-mathas-rui}, Proposition 2.15]  \label{proposition: spanning by r regular monomials}
   Let $S$ be any commutative ring with parameters $\Omega = (\omega_a)_{a \ge 0}$ and
$u_1, \dots,  u_r$.
  For all $n \ge 0$,  $\degbmw{n, S, r}(\Omega; u_1, \dots, u_r)$
  is spanned over $S$ by the set of $r$--regular monomials.   Furthermore, the ideal \ $\degbmw{n} e_{n-1} \degbmw{n}$ is spanned by those $r$--regular monomials  $y^{\bm p}  \gamma  y^{\bm q}$ such that $\gamma$ has at least two  horizontal strands.  
  \end{proposition}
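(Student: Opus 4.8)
The plan is to argue by induction on $n$. For $n \le 1$ the assertions are trivial: $\degbmw{0, S, r} = S$ and $\degbmw{1, S, r} = S[y_1]/\bigl(p(y_1)\bigr)$, whose $r$-regular monomials $1, y_1, \dots, y_1^{r-1}$ form the stated spanning set. So fix $n \ge 2$, assume the result for all smaller indices, and let $V_n$ denote the $S$-span of the $r$-regular monomials in $\degbmw{n} = \degbmw{n, S, r}$. The identity diagram with all exponents zero lies in $V_n$, so the first statement reduces to showing that $V_n$ is a left ideal — equivalently, that $g\, m \in V_n$ for every $r$-regular monomial $m = y^{\pbold}\gamma\, y^{\qbold}$ and every generator $g$ among the $s_i, e_i$ $(1 \le i < n)$ and the $y_j$ $(1 \le j \le n)$.

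First I would assemble the local rewriting rules. The skein relations $s_i y_i = y_{i+1}s_i + e_i - 1$ and $y_i s_i = s_i y_{i+1} + e_i - 1$, and the anti-symmetry relations $(y_i + y_{i+1})e_i = 0 = e_i(y_i + y_{i+1})$, already hold for every $i$; conjugating the compression relation by suitable products of the $s_j$ gives its local form $e_i y_i^a e_i = \omega_a e_i$ for every $i$; and from the skein relations one gets $y_{i+1} = s_i y_i s_i - e_i + s_i$. Using this last identity and the cyclotomic relation $p(y_1) = 0$, an induction on $j$ shows that each $y_j$ satisfies a monic relation of degree $r$ over a subalgebra of $\degbmw{n}$ not involving $y_j$; this, together with the conjugated forms of the already-established identity $\degbmw{2, S, r}\, e_1 = \spn_S\{y_1^l e_1 : 0 \le l < r\}$, is the device that forces $y$-exponents back into the range $\{0, \dots, r-1\}$. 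I would also note that $y_n$ commutes with $\degbmw{n-1}$ (by the commutation relations, since $n \notin \{i, i+1\}$ when $i \le n-2$), and that, via the homomorphism $\mathcal B_n(\omega_0) \to \degbmw{n}$ recorded above, any word in the $s_i, e_i$ alone equals a $\Z[\omega_0]$-combination of Brauer diagrams, multiplied by the usual composition rule (closed loops producing factors of $\omega_0$); in particular, composing any Brauer diagram with some $e_i$ produces a diagram with at most $n - 2$ vertical strands.

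The heart of the matter is the reduction of $g\, m$ to a combination of $r$-regular monomials. If $g = y_j$, then $y_j m = y^{\pbold'}\gamma\, y^{\qbold}$ where $\pbold'$ raises the $j$-th exponent of $\pbold$; if $j$ is a position permitted by $\gamma$ one lowers a too-large exponent using the degree-$r$ relation for $y_j$ (or the conjugated $e_1$-identity), and otherwise one transports the extra $y_j$ toward $\gamma$ via the local skein and commutation relations — which spawn $e_i$-terms — until it reaches a horizontal strand of $\gamma$, where either $(y_i + y_{i+1})e_i = 0$ moves it to a permitted position or $e_i y_i^a e_i = \omega_a e_i$ absorbs it into an $\omega_a$, in both cases at the cost of diagrams with strictly more horizontal strands. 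If $g = s_i$ or $g = e_i$, one first moves $g$ rightward past $y^{\pbold}$ using the local skein relations, then composes $s_i\gamma$ or $e_i\gamma$ as Brauer diagrams, and finally re-normalizes the $y$-exponents attached to each resulting diagram as in the previous case; the auxiliary $e_i$-factors and the subalgebra coefficients produced along the way are absorbed by the inductive hypothesis. Each move either leaves $\gamma$ unchanged while strictly simplifying the attached $y$-data, or replaces $\gamma$ by diagrams with strictly more horizontal strands. The main obstacle is to package this into a terminating procedure: choosing a complexity statistic — decreasing in the number of vertical strands of $\gamma$, and for fixed $\gamma$ in a suitable well-order on the exponent data — that strictly drops at every move, and checking that the normal-form constraints on which $p_i$ and $q_i$ may be nonzero survive each move. (The count $r^n(2n-1)!!$ is then simply the remark that each of the $(2n-1)!!$ Brauer diagrams carries exactly $n$ free exponents.)

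For the last assertion, let $I \subseteq \degbmw{n}$ be the $S$-span of the $r$-regular monomials whose diagram has at least one horizontal strand — equivalently, at least two, since a horizontal strand on the bottom forces one on the top. Rerunning the reduction above while tracking the diagram, one finds that left or right multiplication of such a monomial by any generator again lands in $I$: the only diagram operations involved are composition with a permutation diagram $s_i$ (which preserves the number of vertical strands) and composition with some $e_i$ (which keeps it at most $n-2$), so a diagram with a horizontal strand is never restored to the full $n$ vertical strands. Hence $I$ is a two-sided ideal, and since $e_{n-1} \in I$ we get $\degbmw{n}\, e_{n-1}\, \degbmw{n} \subseteq I$. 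Conversely, any Brauer diagram with a horizontal strand lies in the ideal of $\mathcal B_n(\omega_0)$ generated by $e_{n-1}$ (the $e_i$ are all conjugate and this ideal is exactly the span of diagrams having a horizontal strand), so every $r$-regular monomial $y^{\pbold}\gamma\, y^{\qbold}$ with $\gamma$ carrying a horizontal strand lies in $\degbmw{n}\, e_{n-1}\, \degbmw{n}$; thus $I \subseteq \degbmw{n}\, e_{n-1}\, \degbmw{n}$, and the two coincide. (Alternatively: the relations $e_i e_{i\pm 1} e_i = e_i$ force every $e_i$ into the ideal generated by $e_{n-1}$, so $\degbmw{n}\, e_{n-1}\, \degbmw{n} = \ker\bigl(\degbmw{n} \to H_n(u_1, \dots, u_r)\bigr)$; the $r$-regular monomials without a horizontal strand map onto the standard basis of the degenerate cyclotomic Hecke algebra while the others map to $0$, and since $H_n(u_1, \dots, u_r)$ is free of rank $r^n n!$ the kernel is exactly the span of the $r$-regular monomials with a horizontal strand.)
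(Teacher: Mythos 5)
First, a contextual point: the paper does not actually prove this proposition; it quotes it from Ariki--Mathas--Rui (their Proposition 2.15) and, in the remark immediately following, only explains why the argument there requires no hypotheses on $S$ or on the parameters. Your sketch attempts to reconstruct that cited proof, and your overall strategy (rewrite words to normal form, reducing $y$-exponents via the cyclotomic relation and pushing everything else onto diagrams with more horizontal strands) is indeed the right one. But two steps are genuinely problematic. The most serious is the claimed ``local form'' $e_i y_i^a e_i = \omega_a e_i$ for all $i$, which you obtain by ``conjugating the compression relation by suitable products of the $s_j$.'' That derivation cannot work: conjugation by $s_i$ does not send $y_i$ to $y_{i+1}$, because the skein relation $s_i y_i = y_{i+1} s_i + e_i - 1$ carries correction terms, so the $s_j$ do not act on the $y$'s by place permutation in $\degbmw{n}$ itself (only in the associated graded algebra). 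Worse, the identity itself is false. Concretely, take $r=1$ with admissible parameters, so $y_1 = u_1$, $\omega_0 = 2u_1+1$, $\omega_2 = 2u_1^3 + u_1^2$, and $y_2 = u_1 + s_1 - e_1$; using $e_2 s_1 e_2 = e_2 e_1 e_2 = e_2$ one computes $e_2 y_2^2 e_2 = (\omega_2 + 4u_1)\,e_2 \neq \omega_2 e_2$. The correct statement is Nazarov's ``wheel'' relation $e_i y_i^a e_i = \omega_a^{(i)} e_i$, where $\omega_a^{(i)}$ is a central element of the subalgebra generated by the index-$<i$ generators, not a scalar. This weaker statement does suffice for spanning (one disposes of $\omega_a^{(i)}$ by induction on $i$), but it must be proved, and your reduction has to be restated to tolerate non-scalar coefficients; as written, the step that ``absorbs $y$-powers into an $\omega_a$'' is resting on a false relation.

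The second gap is one you name yourself: the termination of the rewriting procedure. Saying ``choose a complexity statistic that strictly drops at every move'' is a statement of the problem, not a solution, and in the source this is precisely where the work lies. The cleanest repair --- and the one this paper itself uses in Section 5.2 for the analogous semi-admissible spanning result --- is to filter $\degbmw{n}$ by $y$-degree and work in the associated graded algebra, where $s_i y_i s_i = y_{i+1}$ and $e_i y_i^a e_i = 0$ for $a \ge 1$ hold exactly; every skein, compression and wheel correction then has strictly lower filtration degree, and termination becomes an induction on degree rather than a bespoke well-ordering of moves. Your final paragraph identifying the span of the monomials with a horizontal strand with the ideal $\degbmw{n} e_{n-1} \degbmw{n}$ is fine (either via the Brauer-ideal argument or via the Hecke-algebra quotient), granting the spanning statement itself.
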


 \begin{remark}  \label{remark:  prop. on spanning set does not need 2 invertible}
 It may appear from the presentation in ~\cite{ariki-mathas-rui} that this result depends on the invertibility of $2$ in the ground ring and on an additional condition on the parameters (called ``admissibility" in ~\cite{ariki-mathas-rui}, see Definition 2.10 in that paper).   However, in fact, the result does not depend on any additional assumptions.    From Theorem 2.12 in ~\cite{ariki-mathas-rui}, one only needs the statement that the degenerate affine BMW algebra is spanned by regular monomials, and the argument for this part of Theorem 2.12 is valid over an arbitrary ring.   The argument given for Proposition 2.15 itself in ~\cite{ariki-mathas-rui} is also valid over an arbitrary ring.
 \end{remark}

  \begin{theorem}[\cite{ariki-mathas-rui}]  \label{theorem:  generic semisimplicity for deg bmw from ARM}
  Let $F = \Q(\ubold_1, \dots, \ubold_r)$ denote the field of fractions of \ $\mathcal Z$.  Then the algebra $\degbmw{n, F, r}(\bm \Omega; \ubold_1, \dots, \ubold_r)$ is split--semisimple of dimension $r^n (2n-1)! !$. 
  \end{theorem}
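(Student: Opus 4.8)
The plan is to bound $\dim_F\degbmw{n,F,r}$ above by $r^n(2n-1)!!$ using the spanning set of Proposition~\ref{proposition: spanning by r regular monomials}, and to obtain the matching lower bound together with split semisimplicity by induction on $n$, passing through the short exact sequence onto a degenerate cyclotomic Hecke algebra and a Jones basic construction.

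By Proposition~\ref{proposition: spanning by r regular monomials} we have $\dim_F\degbmw{n,F,r}\le r^n(2n-1)!!$, with equality precisely when the $r$--regular monomials are linearly independent over $F$. The cases $n=0,1$ are immediate (convention $(-1)!!=1$): $\degbmw{0,F,r}=F$, and $\degbmw{1,F,r}=F[y_1]/\big((y_1-\ubold_1)\cdots(y_1-\ubold_r)\big)\cong F^r$, as $\ubold_1,\dots,\ubold_r$ are pairwise distinct in $F$. Now fix $n\ge 2$, set $A:=\degbmw{n,F,r}(\bm \Omega;\ubold_1,\dots,\ubold_r)$, and assume the theorem --- together with the description of its simple modules and the branching rule to $\degbmw{n-1,F,r}$ --- for all smaller indices. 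Setting the $e_i$ to $0$ and applying the skein relations turns the presentation of $A$ into that of the degenerate cyclotomic Hecke algebra $H_n=H_n(\ubold_1,\dots,\ubold_r)$, giving a short exact sequence $0\to J\to A\to H_n\to 0$ with $J=Ae_{n-1}A$. Since $\ubold_1,\dots,\ubold_r$ are algebraically independent over $\Q$, the parameters of $H_n$ are in general position, so $H_n$ is split semisimple of dimension $r^nn!$, with simple modules labelled by the $r$--multipartitions of $n$ (see, e.g., \cite{kleshchev-book}).

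For the ideal $J$: the scalar $\omega_0=H_0(\ubold)=2(\ubold_1+\dots+\ubold_r)+\delta_{(\text{$r$ odd})}$ is a nonzero, hence invertible, element of $F$. Using this, the tangle relation $e_{n-1}e_{n-2}e_{n-1}=e_{n-1}$, the commutation of $e_{n-1}$ with the subalgebra on strands $1,\dots,n-2$, the anti--symmetry relation $e_{n-1}(y_{n-1}+y_n)=0$, the compression relations, and the standard reductions of words containing $e_{n-1}$ (such as $s_{n-1}e_{n-1}=e_{n-1}$ and $e_{n-1}y_n=-e_{n-1}y_{n-1}$), one shows --- exactly as for the Brauer algebra and as carried out in \cite{ariki-mathas-rui} --- that $e_{n-1}Ae_{n-1}\cong\degbmw{n-2,F,r}$ and that $J$ is the Jones basic construction for $\degbmw{n-2,F,r}\subseteq\degbmw{n-1,F,r}$, so that $J\cong\degbmw{n-1,F,r}\otimes_{\degbmw{n-2,F,r}}\degbmw{n-1,F,r}$; here the point is that $\bm \Omega$ satisfies the admissibility identities~\refc{equation: periodicity condition on omega's} and~\refc{equation: admissibility relation degenerate} (Lemma~\ref{corollary: universal polynomials degenerate admissibility}), which is what makes this the basic construction of the genuine inclusion rather than a degeneration. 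By the inductive hypothesis $\degbmw{n-2,F,r}$ and $\degbmw{n-1,F,r}$ are split semisimple with branching given by adding or removing a box in an $r$--multipartition; hence $J$ is split semisimple, its simples are those induced from the simples of $\degbmw{n-2,F,r}$ (labelled by $r$--multipartitions $\lambda$ with $|\lambda|<n$, $n\equiv|\lambda|\pmod 2$), and $\dim_FJ$ is determined. Since $J$ and $A/J=H_n$ are semisimple, so is $A$: the image of $\operatorname{rad}(A)$ in $A/J$ lies in $\operatorname{rad}(A/J)=0$, whence $\operatorname{rad}(A)\subseteq J$, and then $\operatorname{rad}(A)\subseteq\operatorname{rad}(J)=0$; moreover $A$ is split, since every simple summand occurs in $H_n$ or $J$. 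The simples of $A$ are then labelled by the $r$--multipartitions $\lambda$ with $0\le|\lambda|\le n$ and $n\equiv|\lambda|\pmod 2$, with $\dim_F\Delta(\lambda)$ equal to the number of length-$n$ up-down $\lambda$--tableaux; so $\dim_FA=\dim_FJ+\dim_FH_n=\sum_\lambda(\dim_F\Delta(\lambda))^2=r^n(2n-1)!!$, the last equality being the combinatorial identity $\sum_\lambda(\#\{\text{up-down }\lambda\text{--tableaux}\})^2=r^n(2n-1)!!$. This attains the upper bound, so the $r$--regular monomials form a basis and the induction closes.

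I expect the main obstacle to be the basic--construction step: verifying from the defining relations that the conditional expectation $\degbmw{n-1,F,r}\to\degbmw{n-2,F,r}$ implicit in $e_{n-1}\,x\,e_{n-1}\in\degbmw{n-2,F,r}\,e_{n-1}$ has the nondegeneracy (Markov) properties the Jones basic construction requires, together with the absorption identity $Ae_{n-1}A=\degbmw{n-1,F,r}\,e_{n-1}\,\degbmw{n-1,F,r}$ --- it is exactly in evaluating $e_{n-1}\,y_{n-1}^a\,e_{n-1}$ and $e_{n-1}\,y_n^a\,e_{n-1}$ through the compression and anti--symmetry relations that the shape $\bm \Omega=(\eta^+_a(\ubold))_{a\ge 0}$ of the parameter sequence, and hence its admissibility, is used. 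The remaining ingredients --- the semisimplicity of $H_n$ over $F$, the combinatorial identity for up-down tableaux, and the radical argument --- are routine. (Equivalently, one may first establish that $\degbmw{n,F,r}$ is cellular via the iterated basic construction, then check that each cell bilinear form is nondegenerate over $F$ by exhibiting a nonzero value of its Gram determinant, the dimension following again from the combinatorial identity.)
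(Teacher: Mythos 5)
Your argument is correct in outline, but it takes a genuinely different route from the one the paper relies on. The paper does not reprove this theorem: it cites Ariki--Mathas--Rui and records only that their proof goes ``by explicit construction of sufficiently many irreducible representations'' --- that is, by writing down seminormal representations $\Delta(\lambda)$ indexed by $r$--multipartitions $\lambda$ with $|\lambda|\le n$ and $n\equiv|\lambda|\pmod 2$, with bases indexed by up--down tableaux, checking directly that these are irreducible and pairwise non--isomorphic over $F=\Q(\ubold_1,\dots,\ubold_r)$, and then matching $\sum_\lambda(\dim\Delta(\lambda))^2=r^n(2n-1)!!$ against the spanning bound of Proposition \ref{proposition: spanning by r regular monomials}. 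You instead get the lower bound structurally, by induction through the short exact sequence $0\to J\to A\to H_n\to 0$ and a Jones basic construction for $\degbmw{n-2,F,r}\subseteq\degbmw{n-1,F,r}$. Both proofs share the same upper bound and the same up--down--tableau identity; what differs is how semisimplicity and the count of irreducibles are produced. The explicit--representation route is self--contained but computation--heavy (one must verify the seminormal relations); your route outsources the computations to the Hecke--algebra side (generic split semisimplicity of $H_n$) and to the basic--construction machinery, and has the advantage of simultaneously exhibiting the cell--module filtration. This inductive strategy is essentially the one used elsewhere in this literature (e.g.\ in the cellularity arguments of Goodman--Graber and in Rui--Si), so it is a legitimate alternative.

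The one load--bearing step you defer rather than prove is exactly the one you flag: that $x\mapsto xe_{n-1}$ is injective on $\degbmw{n-2,F,r}$ and that $J\cong\degbmw{n-1,F,r}\otimes_{\degbmw{n-2,F,r}}\degbmw{n-1,F,r}$ is the honest basic construction, so that $J$ is semisimple rather than a degenerate quotient of the endomorphism algebra. This is where admissibility of $\bm\Omega$ (and $\omega_0\ne 0$ in $F$) enters, and it is not automatic from the relations alone; it requires the freeness statements at levels $n-1$ and $n-2$, which your induction does supply, plus a nondegeneracy-of-the-trace (or Gram determinant) verification over the generic field. As written, your proof is a correct skeleton with that vertebra still to be inserted; with it filled in (it is available in the cited literature), the argument closes.
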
 

This theorem is proved by explicit construction of sufficiently many irreducible representations.

\begin{corollary} {\rm (cf.~\cite{ariki-mathas-rui}, Theorem 5.5)}  \label{corollary:  freeness of deg BMW with admissible parameters}
Let $S$ be a commutative ring with  parameters $\Omega = (\omega_a)_{a \ge 0}$   and 
$u_1, \dots, u_r$.  Assume that $\Omega$ is $(u_1, \dots, u_r)$--admissible. Then for all $n \ge 0$,  $\degbmw{n, S, r}(\Omega; u_1, \dots, u_r)$ is a free
$S$--module with basis the set of $r$--regular monomials.
\end{corollary}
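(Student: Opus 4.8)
The plan is to reduce everything to the universal ring $\mathcal Z = \Z[\ubold_1, \dots, \ubold_r]$ and then specialize. First I would treat the case $S = \mathcal Z$ with the universal $(\ubold_1, \dots, \ubold_r)$--admissible parameters $\bm \Omega = (\omegabold_a)_{a \ge 0}$ defined by \refc{equation:  universal polynomials for degenerate admissibility2}. By Proposition \ref{proposition: spanning by r regular monomials}, the algebra $\degbmw{n, \mathcal Z, r}(\bm \Omega; \ubold_1, \dots, \ubold_r)$ is spanned over $\mathcal Z$ by the set $\mathcal R_n$ of $r$--regular monomials, and $|\mathcal R_n| = r^n (2n-1)!!$. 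Since $\mathcal Z$ is an integral domain with field of fractions $F = \Q(\ubold_1, \dots, \ubold_r)$, applying \refc{equation:  universal a admissible ring and specializations} with $S = F$ gives $\degbmw{n, \mathcal Z, r}(\bm \Omega; \ubold) \otimes_{\mathcal Z} F \cong \degbmw{n, F, r}(\bm \Omega; \ubold)$, which by Theorem \ref{theorem:  generic semisimplicity for deg bmw from ARM} has $F$--dimension $r^n (2n-1)!!$. The images of $\mathcal R_n$ span this $F$--vector space and have cardinality equal to its dimension, so they form an $F$--basis; in particular they are $F$--linearly independent. Because $\mathcal Z \hookrightarrow F$, any nontrivial $\mathcal Z$--linear relation among the elements of $\mathcal R_n$ inside $\degbmw{n, \mathcal Z, r}$ would push forward to a nontrivial $F$--linear relation among their images, which is impossible; hence $\mathcal R_n$ is a $\mathcal Z$--basis and $\degbmw{n, \mathcal Z, r}(\bm \Omega; \ubold)$ is free over $\mathcal Z$ of rank $r^n (2n-1)!!$.

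Next I would transfer this to an arbitrary commutative ring $S$ with $(u_1, \dots, u_r)$--admissible parameters $\Omega$. The unique ring homomorphism $\mathcal Z \to S$ with $\ubold_j \mapsto u_j$ carries $\omegabold_a \mapsto \omega_a$ for all $a \ge 0$, precisely because $\Omega$ is $(u_1, \dots, u_r)$--admissible, so by \refc{equation:  universal a admissible ring and specializations},
\[
\degbmw{n, S, r}(\Omega; u_1, \dots, u_r) \ \cong \ \degbmw{n, \mathcal Z, r}(\bm \Omega; \ubold_1, \dots, \ubold_r) \otimes_{\mathcal Z} S.
\]
Tensoring a free module with a prescribed basis yields a free module whose basis is the image of that basis; thus $\degbmw{n, S, r}(\Omega; u_1,\dots,u_r)$ is free over $S$ with basis the image of $\mathcal R_n$ under this isomorphism. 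Finally, one observes that this image is exactly the set of $r$--regular monomials of $\degbmw{n, S, r}(\Omega; u_1,\dots,u_r)$: the constraints defining an $r$--regular monomial $y^{\bm p}  \gamma  y^{\bm q}$ (that $\gamma$ be a Brauer diagram and the exponents lie in $\{0, 1, \dots, r-1\}$ subject to the horizontal/vertical strand conditions) are purely diagrammatic and do not refer to the ground ring, and the Brauer algebra homomorphism $\mathcal B_n(\omega_0) \to \degbmw{n}$ is compatible with the base change $\mathcal Z \to S$. This yields the stated basis and completes the argument.

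I do not expect a serious obstacle: the two substantive ingredients --- the ring--independent spanning statement (Proposition \ref{proposition: spanning by r regular monomials}, together with Remark \ref{remark:  prop. on spanning set does not need 2 invertible}, which guarantees it needs no invertibility of $2$) and the generic split--semisimplicity with the exact dimension $r^n(2n-1)!!$ (Theorem \ref{theorem:  generic semisimplicity for deg bmw from ARM}) --- have already been isolated. The only point calling for a little care is the bookkeeping in the last paragraph, namely that the distinguished $\mathcal Z$--basis of the universal algebra specializes \emph{bijectively} onto the distinguished spanning set over $S$ rather than onto some arbitrary spanning set; this is immediate once one notes that ``being an $r$--regular monomial'' is a combinatorial condition independent of the ground ring.
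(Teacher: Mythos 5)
Your argument is correct and follows essentially the same route as the paper's proof: reduce to the universal ring $\mathcal Z$ via \refc{equation:  universal a admissible ring and specializations}, combine the spanning statement of Proposition \ref{proposition: spanning by r regular monomials} with the dimension count from Theorem \ref{theorem:  generic semisimplicity for deg bmw from ARM} over the fraction field to get linear independence over $\mathcal Z$, and then specialize. The extra bookkeeping you include about the basis specializing bijectively onto the $r$--regular monomials over $S$ is left implicit in the paper but is a harmless (indeed welcome) addition.
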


\begin{proof}  Because of (\ref{equation:  universal a admissible ring and specializations}), it suffices to show that $\degbmw{n, \mathcal Z, r}(\bm \Omega; \ubold_1, \dots, \ubold_r)$ is a free
$\mathcal Z$--module with basis the set $\mathcal M$ of $r$--regular monomials.   By Proposition
\ref{proposition: spanning by r regular monomials},  $\mathcal M$ is a spanning set, and
$\mathcal M \otimes 1 := \{m \otimes 1 : m \in \mathcal M\}$ is a spanning set  in $\degbmw{n, \mathcal Z, r} \otimes_{\mathcal Z} F =
\degbmw{n, F, r}$.  But by Theorem \ref{theorem:  generic semisimplicity for deg bmw from ARM}, the latter algebra over $F$ has dimension $r^n (2n-1)! !$, and hence $\mathcal M \otimes 1$ is linearly independent over $F$.  It follows that $\mathcal M$ is linearly independent over $\mathcal Z$.  
\end{proof}

The following theorem concerns cellularity of degenerate cyclotomic BMW algebras.  The definition of cellularity is given in Section \ref{section: cellularity}.

\begin{theorem}  {\rm (cf.~\cite{ariki-mathas-rui}, Theorem 7.17)}
 Let $S$ be a commutative ring with  parameters $\Omega = (\omega_a)_{a \ge 0}$   and 
$u_1, \dots, u_r$.  Assume that $\Omega$ is $(u_1, \dots, u_r)$--admissible.     Then $\degbmw{n, S, r}(\Omega; u_1, \dots, u_r)$ is a cellular algebra of rank
$r^n (2n -1)!!$.  
\end{theorem}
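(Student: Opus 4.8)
The plan is to reduce to a universal ring and then run the iterated--inflation argument of~\cite{ariki-mathas-rui}, Section~7, checking that no step requires $2$ to be invertible. By~\eqref{equation:  universal a admissible ring and specializations}, $\degbmw{n, S, r}(\Omega; u_1, \dots, u_r) \cong \degbmw{n, \mathcal Z, r}(\bm \Omega; \ubold_1, \dots, \ubold_r) \otimes_{\mathcal Z} S$, and cellularity, with a fixed cell datum, is inherited under arbitrary change of ground ring; so it suffices to prove that $N_n := \degbmw{n, \mathcal Z, r}$ is cellular over $\mathcal Z = \Z[\ubold_1, \dots, \ubold_r]$. By Corollary~\ref{corollary:  freeness of deg BMW with admissible parameters}, each $N_m$ ($0 \le m \le n$) is free over $\mathcal Z$ of rank $r^m (2m-1)!!$ with basis the $r$--regular monomials; so the rank assertion is automatic once cellularity is proved, and, just as usefully, we know in advance the size that any conjectural inflation basis must have.

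The proof is by induction on $n$. For $n = 0$ and $n = 1$ the algebra $N_n$ is commutative ($\mathcal Z$, respectively $\mathcal Z[y_1]/((y_1-u_1)\cdots(y_1-u_r))$), hence cellular. For the inductive step, consider the $*$--invariant ideal $J = N_n e_{n-1} N_n$. Two facts are needed. First, $N_n / J$ is the degenerate cyclotomic Hecke algebra $H_{r,n}(u_1, \dots, u_r)$: killing $e_{n-1}$ kills every $e_i$, since the $e_i$ are conjugate under the symmetric group, and the presentation of $N_n$ then collapses to that of $H_{r,n}$; and $H_{r,n}$ is cellular, free of rank $r^n\, n!$. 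Second, $J$, regarded as a non--unital algebra with the involution $*$, is an inflation of $N_{n-2}$: there is a free $\mathcal Z$--module $V$, spanned by the ``decorated half--diagrams'' --- Brauer half--diagrams on $n$ bottom vertices with exactly one horizontal strand (hence $n-2$ through--strands), that strand carrying an exponent $y_1^p$ with $0 \le p \le r - 1$ --- and a bilinear form $\langle\,,\,\rangle\colon V \otimes_{\mathcal Z} V \to N_{n-2}$ such that $J \cong V \otimes_{\mathcal Z} N_{n-2} \otimes_{\mathcal Z} V$, with multiplication $(v_1 \otimes x_1 \otimes w_1)(v_2 \otimes x_2 \otimes w_2) = v_1 \otimes x_1 \langle w_1, v_2\rangle x_2 \otimes w_2$ and $*$ acting by $v \otimes x \otimes w \mapsto w \otimes x^* \otimes v$. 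Granting these, $N_n$ is an iterated inflation of $H_{r,n}$ and of $N_{n-2}$ (cellular by induction) along free modules, and the K\"onig--Xi iterated--inflation criterion for cellularity --- applied exactly as in~\cite{ariki-mathas-rui} --- yields that $N_n$ is cellular, of rank $r^n n! + (\operatorname{rank} V)^2\, r^{n-2}(2n-5)!! = r^n (2n-1)!!$.

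The real content is the second fact, and here is where admissibility and Lemma~\ref{lemma:  why 1/2 is not needed} enter. Using Proposition~\ref{proposition: spanning by r regular monomials}, each $r$--regular monomial lying in $J$ --- one whose Brauer diagram has at least two horizontal strands --- can be rewritten, via the tangle and commutation relations, in the factored form $a\, m\, b$ with $a, b$ decorated half--diagrams and $m$ an $r$--regular monomial of $N_{n-2}$; this produces a surjection $V \otimes_{\mathcal Z} N_{n-2} \otimes_{\mathcal Z} V \to J$, which is an isomorphism since comparing the sizes of the two sides --- using Corollary~\ref{corollary:  freeness of deg BMW with admissible parameters} for $N_n$ and for $N_{n-2}$ --- forces $V$ to be free of exactly the rank required above. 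One then checks that multiplication respects the decomposition; the structure constants of $\langle\,,\,\rangle$ arise from repeated application of the compression relations $e_1 y_1^a e_1 = \omega_a e_1$ (together with the anti--symmetry relation $e_1(y_1 + y_2) = 0$), so that $\langle a, b\rangle$ is, up to sign, an $\omega$--multiple of a diagram of $N_{n-2}$. Because $\Omega$ is $(u_1, \dots, u_r)$--admissible, $\omega_a = \eta_a^+(u_1, \dots, u_r)$, and by Lemma~\ref{lemma:  why 1/2 is not needed} the $\eta_a^+$ lie in $\Z[\ubold_1, \dots, \ubold_r]$; no inverse of $2$ is ever needed, so the whole construction is valid over $\mathcal Z$ --- indeed over $\Z$ --- and in particular in characteristic $2$. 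I expect the verification of the inflation axioms for $J$ --- freeness of $V$ with the stated basis, bilinearity and the appropriate ``associativity'' of $\langle\,,\,\rangle$, and its compatibility with $*$ --- to be the main obstacle; cellularity of $H_{r,n}$ and the K\"onig--Xi bookkeeping are standard and transcribe essentially verbatim from~\cite{ariki-mathas-rui}.
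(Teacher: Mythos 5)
Your overall strategy coincides with the paper's: reduce to the universal ring $\mathcal Z$ via \eqref{equation:  universal a admissible ring and specializations}, and then rerun the proof of \cite{ariki-mathas-rui}, Theorem 7.17, with Corollary \ref{corollary:  freeness of deg BMW with admissible parameters} substituted for their freeness theorem; the paper's proof is exactly this reduction plus the citation. The difficulty lies in your reconstruction of the cited argument. The claimed decomposition $J \cong V\otimes_{\mathcal Z}\degbmw{n-2}\otimes_{\mathcal Z}V$, with $V$ the span of decorated half--diagrams having exactly one horizontal strand, is false for $n\ge 4$. A rank count shows this: $J$ is free of rank $r^n\bigl((2n-1)!!-n!\bigr)$, while $V\otimes_{\mathcal Z}\degbmw{n-2}\otimes_{\mathcal Z}V$ has rank $(\operatorname{rank}V)^2\, r^{\,n-2}(2n-5)!!$. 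For $n=4$ the first number is $81r^4$, so your bookkeeping would force $(\operatorname{rank}V)^2=27r^2$, which has no solution; and the explicit $V$ you describe has rank $6r$, giving $108r^4\ne 81r^4$. The overcount arises because diagrams with $f\ge 2$ pairs of horizontal strands are reached through several choices of ``distinguished'' arc, so $J$ is not a single inflation of $\degbmw{n-2}$ over the ground ring.

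The structure that \cite{ariki-mathas-rui} actually establishes is that $\degbmw{n}$ is an \emph{iterated} inflation of the degenerate cyclotomic Hecke algebras $H_{r,\,n-2f}$ for $0\le f\le\lfloor n/2\rfloor$, along modules $V_f$ spanned by decorated half--diagrams with $f$ arcs; equivalently, $J$ carries a filtration whose $f$--th layer is $V_f\otimes_{\mathcal Z}H_{r,\,n-2f}\otimes_{\mathcal Z}V_f$ for $f\ge 1$. The bottom algebras are Hecke algebras, not $\degbmw{n-2}$, and with $\operatorname{rank}V_f=r^f\,n!/\bigl(2^f f!\,(n-2f)!\bigr)$ the rank identity $\sum_f(\operatorname{rank}V_f)^2\,r^{\,n-2f}(n-2f)!=r^n(2n-1)!!$ does check out. (The paper's alternative reference, the Jones basic construction treatment, instead relates $J$ to $\degbmw{n-1}\otimes_{\degbmw{n-2}}\degbmw{n-1}$ --- a tensor over the subalgebra, not over $\mathcal Z$ --- which is a different and correct way to make an induction on $n$ work.) Your surrounding observations --- that base change from $\mathcal Z$ preserves cellularity, that $\degbmw{n}/J$ is the degenerate cyclotomic Hecke algebra, and that Lemma \ref{lemma:  why 1/2 is not needed} removes any need for $1/2$ in the structure constants --- are correct and are precisely the points the paper wants made; but as written the inductive step rests on an inflation that does not exist.
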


\begin{proof}  Because of (\ref{equation:  universal a admissible ring and specializations}), it suffice to prove this when $S = \mathcal Z$.    For this special case, one can follow the proof in 
~\cite{ariki-mathas-rui}, Theorem 7.17,  substituting Corollary \ref{corollary:  freeness of deg BMW with admissible parameters} for~\cite{ariki-mathas-rui}, Theorem 5.5.    For an alternative treatment of cellularity, see~\cite{goodman-graber2}, Section 6.5. 
\end{proof}

 \subsection{Equivalence of admissibility conditions}
 The following theorem establishes the equivalence of the various admissibility criteria  for degenerate cyclotomic BMW algebras.
 
 \begin{theorem} {\rm (cf. \cite{goodman-degenerate-admissibility}, Theorem 5.2)}   \label{theorem:  equivalent admissibility conditions for degenerate algebras}
 Let $S$ be a commutative ring with  parameters  $\Omega = (\omega_a)_{a \ge 0}$   and 
$u_1, \dots, u_r$.     
Consider the degenerate cyclotomic BMW algebra $\degbmw 2 = \degbmw{2, S, r}(\Omega;  u_1, \dots, u_r)$.
The following are equivalent:
\begin{enumerate}
\item  \label{theorem:  equivalent admissibility conditions for degenerate algebras item 1}
The parameters are admissible, i.e.\  
$\{ e_1, y_1 e_1, \dots, y_1^{r-1} e_1\}$ is linearly independent over $S$ in $\degbmw 2$.
\item  \label{theorem:  equivalent admissibility conditions for degenerate algebras item 2}
$\{ y_1^a e_1 y_1^b,  s_1 y_1^a y_2^b, y_1^a y_2^b  : 0 \le a, b \le r-1\} $ is an $S$--basis of \   $\degbmw 2$.  
\item  \label{theorem:  equivalent admissibility conditions for degenerate algebras item 3}
The parameters satisfy (\ref{equation:  periodicity condition on omega's})  and (\ref{equation:  admissibility relation degenerate}).
\item \label{theorem:  equivalent admissibility conditions for degenerate algebras item 4}
The parameters are  $(u_1, \dots, u_r)$--admissible.  \end{enumerate}
\end{theorem}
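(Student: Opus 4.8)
The plan is to prove the equivalence by establishing a cycle of implications. The implications \refc{theorem:  equivalent admissibility conditions for degenerate algebras item 3} $\Leftrightarrow$ \refc{theorem:  equivalent admissibility conditions for degenerate algebras item 4} and \refc{theorem:  equivalent admissibility conditions for degenerate algebras item 1} $\Rightarrow$ \refc{theorem:  equivalent admissibility conditions for degenerate algebras item 3} are essentially already available. Indeed, \refc{theorem:  equivalent admissibility conditions for degenerate algebras item 3} $\Leftrightarrow$ \refc{theorem:  equivalent admissibility conditions for degenerate algebras item 4} is precisely Lemma \ref{lemma:  equivalence of degenerate admissibility conditions 1}\refc{lemma:  equivalence of degenerate admissibility conditions 1 item 2}. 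For \refc{theorem:  equivalent admissibility conditions for degenerate algebras item 1} $\Rightarrow$ \refc{theorem:  equivalent admissibility conditions for degenerate algebras item 3}: linear independence of $\{e_1, y_1e_1, \dots, y_1^{r-1}e_1\}$ in particular implies that $e_1$ is not a torsion element over $S$ in $\degbmw 2$, so Lemma \ref{lemma: periodicity of omegas when e1 is torsion free} gives \refc{equation:  periodicity condition on omega's}, while Lemma \ref{lemma: linear independence implies admissibility} gives \refc{equation:  admissibility relation degenerate}. The remaining work is to close the cycle by proving \refc{theorem:  equivalent admissibility conditions for degenerate algebras item 4} $\Rightarrow$ \refc{theorem:  equivalent admissibility conditions for degenerate algebras item 2} $\Rightarrow$ \refc{theorem:  equivalent admissibility conditions for degenerate algebras item 1} (the last implication being immediate, since if the displayed set in \refc{theorem:  equivalent admissibility conditions for degenerate algebras item 2} is an $S$--basis then in particular its subset $\{e_1, y_1^b e_1 : \dots\}$, i.e.\ $\{e_1, y_1 e_1, \dots, y_1^{r-1}e_1\}$ taken as $y_1^a e_1 y_1^b$ with $a = 0$ is linearly independent — one must be slightly careful here and instead argue that the $r$ elements $y_1^0 e_1 y_1^b$ for $0 \le b \le r-1$ are part of a basis, hence independent, and these equal $\omega_0\inv \cdot$ something... actually more cleanly: the left ideal span of $\{y_1^a e_1 y_1^b\}$ surjects appropriately; in any case independence of $\{e_1, \dots, y_1^{r-1}e_1\}$ follows from independence of the larger set).

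For the main implication \refc{theorem:  equivalent admissibility conditions for degenerate algebras item 4} $\Rightarrow$ \refc{theorem:  equivalent admissibility conditions for degenerate algebras item 2}, I would invoke Corollary \ref{corollary:  freeness of deg BMW with admissible parameters} in the case $n = 2$: under $(u_1, \dots, u_r)$--admissibility, $\degbmw{2, S, r}$ is free over $S$ with basis the set of $r$--regular monomials, of which there are $r^2 (2\cdot 2 - 1)!! = 3 r^2$. One then checks that the set $\{ y_1^a e_1 y_1^b, s_1 y_1^a y_2^b, y_1^a y_2^b : 0 \le a, b \le r-1\}$ has exactly $3r^2$ elements and is, up to reindexing and the use of the relations (anti--symmetry $e_1(y_1 + y_2) = 0$ lets one trade $y_2$--powers against $y_1$--powers next to $e_1$; commutation $y_1 y_2 = y_2 y_1$; the Brauer diagrams on two strands being $1$, $s_1$, $e_1$), the image of the $r$--regular monomial basis. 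Concretely: the three Brauer diagrams $\gamma$ on two strands give the three families, with the exponent constraints on $\bm p, \bm q$ in the definition of $r$--regular monomial matching the stated ranges $0 \le a, b \le r-1$ after using the anti--symmetry relation to move any bottom $y$--power on the $e_1$--diagram. So the stated set and the $r$--regular monomial set span the same $S$--module and have the same cardinality $3r^2$; since the latter is a basis, so is the former.

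The step I expect to be the main obstacle is the bookkeeping in \refc{theorem:  equivalent admissibility conditions for degenerate algebras item 4} $\Rightarrow$ \refc{theorem:  equivalent admissibility conditions for degenerate algebras item 2}: verifying carefully that the explicit set in \refc{theorem:  equivalent admissibility conditions for degenerate algebras item 2} really does coincide, as a spanning set, with the $r$--regular monomials for $n = 2$, and in particular that no collisions reduce the count below $3r^2$. For the $e_1$--family one must use $e_1(y_1 + y_2) = 0$ to see that $y_1^p e_1 y_2^q$ spans the same space as $y_1^a e_1 y_1^b$ with $0 \le a, b \le r - 1$, and that these $r^2$ elements are genuinely distinct basis vectors; the identity diagram contributes $y_1^a y_2^b$ with no constraint (since the vertices are tops of vertical strands), again $r^2$ elements; and $s_1$ contributes $y_1^a y_2^b s_1$, which by the skein/commutation relations one rewrites as $s_1 y_1^{a'} y_2^{b'}$ — here one checks the exponent ranges transform correctly and one still gets $r^2$ elements. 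Once the cardinality count $3r^2$ is confirmed to match and spanning is established, freeness from Corollary \ref{corollary:  freeness of deg BMW with admissible parameters} forces linear independence, completing the cycle.
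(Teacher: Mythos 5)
Your proposal is correct and follows essentially the same route as the paper: $(1)\Rightarrow(3)$ via Lemmas \ref{lemma: periodicity of omegas when e1 is torsion free} and \ref{lemma: linear independence implies admissibility}, $(3)\Leftrightarrow(4)$ via Lemma \ref{lemma:  equivalence of degenerate admissibility conditions 1}, $(4)\Rightarrow(2)$ as the $n=2$ case of Corollary \ref{corollary:  freeness of deg BMW with admissible parameters}, and $(2)\Rightarrow(1)$ trivially. The only remark is that your bookkeeping worries in $(4)\Rightarrow(2)$ are unnecessary: the set in $(2)$ is verbatim the set of $r$--regular monomials for $n=2$ (the three Brauer diagrams $1$, $s_1$, $e_1$ give exactly the three displayed families with the stated exponent ranges), so no relations need to be invoked to match the two sets.
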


\begin{proof}    $\refc{theorem:  equivalent admissibility conditions for degenerate algebras item 1} \implies \refc{theorem:  equivalent admissibility conditions for degenerate algebras item 3}$    results from Lemmas \ref {lemma: periodicity of omegas when e1 is torsion free} and \ref {lemma: linear independence implies admissibility}. 
We have 
 $\refc{theorem:  equivalent admissibility conditions for degenerate algebras item 3} \iff \refc{theorem:  equivalent admissibility conditions for degenerate algebras item 4}$   by Lemma \ref{lemma:  equivalence of degenerate admissibility conditions 1}. 
The implication   $\refc{theorem:  equivalent admissibility conditions for degenerate algebras item 4} \implies \refc{theorem:  equivalent admissibility conditions for degenerate algebras item 2}$  is a special case of Corollary \ref{corollary:  freeness of deg BMW with admissible parameters}.  Finally, the implication   $\refc{theorem:  equivalent admissibility conditions for degenerate algebras item 2} \implies \refc{theorem:  equivalent admissibility conditions for degenerate algebras item 1}$  is trivial. 
\end{proof}

  \section{Admissibility for  cyclotomic BMW algebras}  \label{subsection: admissibility in the non degenerate case}
 Fix an integral domain  $S$ with parameters $\rho$, $q$,  $\Omega = (\omega_a)_{a \ge 0}$  and 
 $u_1, \dots, u_r$;  assume that $\rho$ and $q$ are invertible and that  equation (\ref{equation:  basic relation in ground ring})  holds.  Consider the  cyclotomic BMW algebras $\bmw{n, S,r} = 
 \bmw{n, S, r}(\rho, q, \Omega;  u_1, \dots, u_r)$.

\subsection{Admissibility conditions of Wilcox and Yu}

Explicit relations on the parameters that are equivalent to admissibility (i.e.\ linear independence of $\{ e_1, y_1 e_1,  \dots, y_1^{r-1} e_1\}$)  have been found by Wilcox and Yu~\cite{Wilcox-Yu,Wilcox-Yu3,Yu-thesis}.   The form of these relations depends on whether  $q^2 \ne 1$  is satisfied in $S$.  
Note that the conditions $q^2 \ne 1$  (in the non--degenerate case)  and  $\char(S) \ne 2$  (in the degenerate case)  should be regarded as analogous.  

\begin{theorem}[Wilcox \& Yu, \cite{Wilcox-Yu}] \label{theorem: equivalent conditions for admissibility}
 Let $S$ be an integral domain  with
parameters $\rho$, $q$, $\Omega = (\omega_a)_{a \ge 0}$,   and $u_1, \dots, u_r$ satisfying Equation (\ref{equation:  basic relation in ground ring})   and   
 $(q - q\inv) \ne 0$.
The following conditions are equivalent:
\begin{enumerate}
\item  $\{e_1, y_1 e_1, \dots, y_1^{r-1} e_1\} \subseteq \bmw{2, S, r}$ is linearly independent over $S$.
\item  The parameters satisfy the recursion relation (\ref{equation:  periodicity condition on omega's})
and the following relations:

\begin{equation} \label{equation: yu wilcox admissibility condition 1}
\begin{aligned}
 (q-q\inv)  \bigg [ \sum_{j = 1}^{r - \ell} a_{j+\ell} \omega_j 
\bigg ]  =  
 -\rho(& a_\ell -  a_{r-\ell}/a_0) \   \\
& +  \ (q-q\inv) \bigg [ \sum_{j = \max(\ell + 1, \lceil r/2 \rceil)}^{\lfloor (\ell + r)/2 \rfloor} a_{2j - \ell}
-   \sum_{j =  \lceil \ell/2 \rceil}^{\min(\ell, \lceil r/2 \rceil -1)} a_{2j - \ell} \bigg ] \\
 \end{aligned}
\end{equation}
{for $1 \le \ell \le r-1$},  and 
\begin{equation} \label{equation: yu wilcox admissibility condition 2}
\text{ $\rho = \pm a_0$ if $r$ is odd, and $\rho \in \{q\inv a_0,  -q a_0\}$ if $r$ is even.}
\end{equation}
\end{enumerate}
\end{theorem}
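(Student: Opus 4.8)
The plan is to prove the two implications $(1)\Rightarrow(2)$ and $(2)\Rightarrow(1)$ separately, in close analogy with the treatment of the degenerate case in Section~\ref{section: admissibility in the degenerate case}. The leverage for the forward direction comes from the observation that linear independence of $\{e_1, y_1 e_1, \dots, y_1^{r-1} e_1\}$ forces $e_1$ to be nonzero and non-torsion over $S$ (a relation $s e_1 = 0$ with $s \ne 0$ would already make $\{e_1\}$ dependent), so that any identity $c\,e_1 = 0$ in $\bmw{2, S, r}$ with $c \in S$ yields the scalar identity $c = 0$. Granting this, Lemma~\ref{lemma: periodicity of omegas when e1 is torsion free} gives the recursion \eqref{equation: periodicity condition on omega's} at once, and it remains to extract \eqref{equation: yu wilcox admissibility condition 1} and \eqref{equation: yu wilcox admissibility condition 2} from the relations that distinguish the affine BMW algebra from the affine Hecke algebra of type $B$: the unwrapping relation $e_1 y_1 g_1 y_1 g_1 = e_1 = g_1 y_1 g_1 y_1 e_1$, the absorption rules $g_1^{\pm 1} e_1 = e_1 g_1^{\pm 1} = \rho^{\mp 1} e_1$ and $e_1 g_1 e_1 = \rho e_1$, the Kauffman skein relation $g_1 = g_1\inv + (q - q\inv)(1 - e_1)$, the affine braid relation $y_1 g_1 y_1 g_1 = g_1 y_1 g_1 y_1$, and the compression relation $e_1 y_1^a e_1 = \omega_a e_1$.

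For \eqref{equation: yu wilcox admissibility condition 1}, the mechanism I would use is: multiply the unwrapping relation on the left and right by powers $y_1^a$; slide one $g_1$ past a neighbouring $y_1$ using the affine braid relation; eliminate every remaining $g_1$ by substituting $g_1 = g_1\inv + (q - q\inv)(1 - e_1)$ and absorbing each $g_1^{\pm 1} e_1$ into $\rho^{\mp 1} e_1$; and finally sandwich with $e_1$, so that every surviving monomial has the form $e_1 y_1^k e_1 = \omega_k e_1$, with exponents $k \ge r$ (and negative exponents, since $y_1\inv$ is a polynomial in $y_1$ of degree less than $r$ because $a_0$ is a unit) reduced by the cyclotomic relation $\sum_j a_j y_1^j = 0$. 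Equating the resulting coefficient of $e_1$ to zero for each shift $\ell$ gives one instance of \eqref{equation: yu wilcox admissibility condition 1}; the asymmetric ceiling/floor summation limits drop out of the bookkeeping of which reduced powers of $y_1$ actually survive. For \eqref{equation: yu wilcox admissibility condition 2} I would compare two evaluations of $e_1 y_1\inv e_1$: on the one hand it is a prescribed $S$-linear combination of $\omega_0, \dots, \omega_{r-1}$ times $e_1$ coming from the polynomial expression for $y_1\inv$; on the other hand, peeling generators off $e_1 = g_1 y_1 g_1 y_1 e_1$ --- successively multiplying by $g_1\inv$, $y_1\inv$, $g_1\inv$, and using $g_1\inv e_1 = \rho e_1$ --- produces an identity in which $\rho$ appears explicitly, and forcing agreement pins $\rho$ down to $\pm a_0$ when $r$ is odd, resp.\ to $q\inv a_0$ or $-q a_0$ when $r$ is even, the parity entering through the sign bookkeeping of the reductions. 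The self-duality of $\bmw{2, S, r}$ under the involution fixing $e_1$, $g_1$, $y_1$ can be used to halve this labour, since it pairs each derived identity with its mirror image.

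For the converse $(2)\Rightarrow(1)$, I would imitate the proof of Corollary~\ref{corollary: freeness of deg BMW with admissible parameters}. Solving \eqref{equation: periodicity condition on omega's}, \eqref{equation: yu wilcox admissibility condition 1} and \eqref{equation: yu wilcox admissibility condition 2} for $\omega_0, \dots, \omega_{r-1}$ and for $\rho$ exhibits, over a suitable localization $\mathcal Z'$ of $\Z[q^{\pm 1}, u_1^{\pm 1}, \dots, u_r^{\pm 1}]$ (inverting $q - q\inv$ and the denominators forced by \eqref{equation: yu wilcox admissibility condition 2}), a universal admissible choice of $\rho$ and $\Omega$, with the base-change property $\bmw{n, S, r}(\rho, q, \Omega; u_1, \dots, u_r) \cong \bmw{n, \mathcal Z', r} \otimes_{\mathcal Z'} S$ whenever $S$ carries admissible parameters. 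Over the fraction field of $\mathcal Z'$ one then invokes generic split semisimplicity of the cyclotomic BMW algebra, of dimension $r^n(2n - 1)!!$ --- the non-degenerate analogue of Theorem~\ref{theorem: generic semisimplicity for deg bmw from ARM} --- to conclude that the natural spanning set of $\bmw{2, \mathcal Z', r}$, of size $3 r^2$, is a basis; hence $\{e_1, y_1 e_1, \dots, y_1^{r-1} e_1\}$ is part of a basis, hence linearly independent, and independence descends to any admissible $S$. A more self-contained route is to build directly a faithful representation of $\bmw{2, S, r}$ on a free $S$-module of rank $3 r^2$, assembled from the rank-two cyclotomic Hecke algebra $H_2$ (on which $e_1$ acts as zero) together with an $r^2$-dimensional module carrying the ideal $J_{2,r}$, on which the operators $e_1, y_1 e_1, \dots, y_1^{r-1} e_1$ are visibly independent; here the relations \eqref{equation: yu wilcox admissibility condition 1} and \eqref{equation: yu wilcox admissibility condition 2} are exactly what is needed to make the module well defined.

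The main obstacle is the same in both formulations of the converse: verifying that the defining relations of $\bmw{2, S, r}$ --- above all the unwrapping relation and the Kauffman skein relation --- genuinely hold on the constructed free module, equivalently that generic semisimplicity really does hold for the universal admissible ring (which in turn requires exhibiting sufficiently many irreducible representations). On the necessity side, the analogous obstacle is purely organizational: one must carry out the elimination of the $g_1$'s so that the unbalanced summation limits in \eqref{equation: yu wilcox admissibility condition 1} and the even/odd dichotomy in \eqref{equation: yu wilcox admissibility condition 2} emerge in recognizable closed form rather than as an opaque mass of terms --- this combinatorial reorganization is the computational heart of the Wilcox--Yu argument.
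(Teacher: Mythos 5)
This theorem is not proved in the paper at all: it is quoted verbatim from Wilcox and Yu \cite{Wilcox-Yu}, so there is no in-paper argument to measure your attempt against. What can be said is that your outline does reproduce the broad strategy of the cited source: necessity of the conditions by relation-chasing in $\bmw{2,S,r}$ (the recursion \eqref{equation:  periodicity condition on omega's} from the cyclotomic relation sandwiched by $e_1$, exactly as in Lemma \ref{lemma: periodicity of omegas when e1 is torsion free}, and the constraints \eqref{equation: yu wilcox admissibility condition 1}--\eqref{equation: yu wilcox admissibility condition 2} from the unwrapping, skein, and absorption relations), and sufficiency by exhibiting $\bmw{2,S,r}$, or at least the ideal generated by $e_1$, as a free module of the predicted rank on which the relations are verified directly. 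Your observation that linear independence of the whole set already forces $e_1$ to be non-torsion, so that scalar identities can be read off coefficients of $e_1$, is correct and is the right starting point.

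That said, as written this is a plan rather than a proof, and the two places where you defer the work are precisely where all the content of the theorem lives. First, the derivation of the exact summation limits in \eqref{equation: yu wilcox admissibility condition 1} and the parity dichotomy in \eqref{equation: yu wilcox admissibility condition 2} is not a routine bookkeeping exercise that can be waved at; nothing in your sketch guarantees that the elimination procedure terminates in that closed form rather than in a weaker or inequivalent family of identities, and you acknowledge as much. Second, and more seriously, the first version of your converse is circular as stated: generic split semisimplicity of the cyclotomic BMW algebra of dimension $r^n(2n-1)!!$ over the fraction field of the universal admissible ring is not available off the shelf --- in the literature it is established only \emph{after} freeness (equivalently, after the linear independence you are trying to prove) by constructing sufficiently many irreducible representations, which is the same work in disguise. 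Your second, self-contained route --- building the rank-$3r^2$ faithful module directly and checking that \eqref{equation: yu wilcox admissibility condition 1} and \eqref{equation: yu wilcox admissibility condition 2} are exactly the conditions for the unwrapping and skein relations to hold on it --- is the honest path and is essentially what Wilcox and Yu do, but the verification of those relations on the constructed module is the computational heart of their paper and cannot be treated as an afterthought.
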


Note that Equations (\ref{equation:  basic relation in ground ring}), (\ref{equation: yu wilcox admissibility condition 1}),  and 
 (\ref{equation: yu wilcox admissibility condition 2})  determine   $\omega_0, \dots, \omega_{r-1}$ and
 $\rho$   in terms of $q$, $u_1, \dots, u_r$  while
 the recursion relation (\ref{equation:  periodicity condition on omega's})
determines   $\omega_a$  for $a \ge r$. 

In~\cite{Wilcox-Yu3}  and~\cite{Yu-thesis}  Wilcox and Yu derive  explicit relations on the parameters that are equivalent to linear independence of $\{e_1, y_1 e_1, \dots, y_1^{r-1} e_1\}$
also in the case that $q - q\inv = 0$;  their new conditions reduce to   those of Theorem 
\ref {theorem: equivalent conditions for admissibility} in the case $q - q\inv \ne 0$.

\subsection{The admissibility criterion of Rui and Xu}  \label{subsection: admissibility criterion of Rui and Xu}
Rui and Xu~\cite{rui-2008},  following~\cite{ariki-mathas-rui},    take a different approach to 
admissibility for cyclotomic BMW algebras when $q - q\inv \ne 0$.   
Let $\ubold_1,  \dots, \ubold_r$,   $\rhobold$,  $\qbold$, and   $t$  be algebraically independent indeterminants over $\Z$.     
Define
\begin{equation}\label{equation: definition of G(t)}
G(t) = G(\ubold_1, \dots, \ubold_r; t) = \prod_{\ell = 1}^r  \frac{t - \ubold_\ell}{t \ubold_\ell -1}.
\end{equation}
Let
\begin{equation} \label{equation: formula for Z of t}
Z(t) = Z(t; \ubold_1, \dots, \ubold_r, \rhobold, \qbold) = -\rhobold\inv   +  (\qbold -\qbold\inv)\frac{t^2}{t^2 -1} + A(t) \  G(t\inv),
\end{equation}
where
\begin{equation} \label{equation: formula for Z of t 2}
A(t) =
\begin{cases}
{ \textstyle \rho\inv   (\prod_j \ubold_j )}\ +\  (\qbold -\qbold\inv){t}/{(t^2-1)}  \quad &\text{if $r$ is  odd, and } \\
{ \textstyle \rho\inv   (\prod_j \ubold_j )} \ -\  (\qbold -\qbold\inv){t^2}/{(t^2-1)} \quad &\text{if $r$ is  even}.
\end{cases}
\end{equation}

\begin{definition}[Rui and Xu, \cite{rui-2008}]  \label{definition: RX admissibility}
 Let $S$ be an integral domain with
parameters $\rho$, $q$,  $\Omega = (\omega_a)_{a \ge 0}$   and $u_1, \dots, u_r$ satisfying (\ref{equation:  basic relation in ground ring}) and  
$q - q\inv \ne 0$. One says that 
the parameters are {\em $(u_1, \dots, u_r)$--admissible},   or that $\Omega$ is  {\em $(u_1, \dots, u_r)$--admissible}, if  
\begin{equation} \label{equation: defn of u-admissibility in non degenerate bmw case}
(q - q\inv) \sum_{a \ge 0}  \omega_a t^{-a} =    Z(t; u_1, \dots, u_r, \rho, q),
\end{equation} 
where $Z$ is defined in equations (\ref{equation: formula for Z of t}) and (\ref{equation: formula for Z of t 2}).
\end{definition}

\begin{remark} \label{remark:  conditions on rho, assuming u-admissibility}
 Let  $S$ be an integral domain with $(u_1, \dots, u_r)$--admissible parameters, as in Definition 
\ref{definition: RX admissibility}.  With $p = \prod_j u_j$,       we have   
\begin{equation}
\label{equation: conditions on rho in u-admissibility}
\rho = \pm p  \text{\ \ if  $r$  is odd, and  }  \rho \in \{q\inv  p,  -q  p\} \text{\ \ if $r$ is even.}
\end{equation}
The condition 
(\ref{equation: conditions on rho in u-admissibility})
on $\rho$ was included in the definition of $u$--admissibility in 
\cite{rui-2008}, but it actually follows from (\ref{equation:  basic relation in ground ring}) and (\ref{equation: defn of u-admissibility in non degenerate bmw case}), as explained in
~\cite{goodman-admissibility}, Remark 3.10. 
\end{remark}

\subsection{Equivalence of admissibility conditions}
The following theorem establishes the equivalence of the various admissibility conditions for cyclotomic BMW algebras,  in case the ground ring is integral and $q - q\inv \ne 0$.  
 
 \begin{theorem}[\cite{goodman-admissibility}, Theorem 4.4]    \label{theorem: equivalence of admissibility and u-admissibility in the non-degenerate case}
 Let $S$ be an integral domain  with
parameters $\rho$, $q$, $\Omega = (\omega_a)_{a \ge 0}$, and $u_1, \dots, u_r$ satisfying Equation (\ref{equation:  basic relation in ground ring})   and   
 $(q - q\inv) \ne 0$.
The following  are equivalent:
\begin{enumerate}
\item $\{e_1, y_1 e_1, \dots, y_1^{r-1} e_1\} \subseteq \bmw{2, S, r}$ is linearly independent over $S$.
\item  The parameters satisfy the recursion relation (\ref{equation:  periodicity condition on omega's})  and the conditions (\ref{equation: yu wilcox admissibility condition 1})  and 
 (\ref{equation: yu wilcox admissibility condition 2})  of Wilcox and Yu.
\item $\Omega$ is   $(u_1, \dots, u_r)$--admissible.
\end{enumerate}
 \end{theorem}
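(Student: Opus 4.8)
The equivalence $(1)\iff(2)$ is precisely the theorem of Wilcox and Yu recorded above as Theorem~\ref{theorem: equivalent conditions for admissibility}, so the real task is $(2)\iff(3)$, which I would settle by computing with the generating function $W(t)=\sum_{a\ge0}\omega_a t^{-a}$, in close analogy with the degenerate case of Lemma~\ref{lemma:  equivalence of degenerate admissibility conditions 1}. Throughout, $S$ is an integral domain in which $q-q\inv$ and each $u_j$ are invertible and \refc{equation:  basic relation in ground ring} holds. It is convenient to record the polynomial identity $p(t)\,G(t\inv)=\tilde p(t)$, where $\tilde p(t):=\prod_{\ell=1}^r(u_\ell t-1)=\sum_{m=0}^r \tilde a_m t^m$; note $\tilde a_m=(-1)^r a_{r-m}$ and $\prod_j u_j=(-1)^r a_0$.

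\emph{Reformulation of (2).} The computation in the proof of Lemma~\ref{lemma: periodicity of omegas when e1 is torsion free} shows that the recursion \refc{equation:  periodicity condition on omega's}, for all $\ell\ge0$, is equivalent to $P(t):=p(t)\,W(t)$ being a polynomial of degree $\le r$ with $P(0)=0$. Granted this, the relations \refc{equation: yu wilcox admissibility condition 1} for $1\le\ell\le r-1$, each solved for the coefficient of $t^\ell$ in $P$, determine $\omega_1,\dots,\omega_{r-1}$ uniquely in terms of $\rho$ and $q$, while \refc{equation:  basic relation in ground ring} determines $\omega_0$ from $\rho$ and $q$ and \refc{equation: yu wilcox admissibility condition 2} restricts $\rho$ to a finite set. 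So for each admissible value of $\rho$, condition (2) has a unique solution $\Omega$, and conversely that $\Omega$ satisfies (2).

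\emph{Reformulation of (3), and the comparison.} Multiplying the defining identity \refc{equation: defn of u-admissibility in non degenerate bmw case} by $p(t)$ and using $p(t)G(t\inv)=\tilde p(t)$ and \refc{equation: formula for Z of t 2} gives
\[
p(t)\,Z(t)=-\rho\inv p(t)+\rho\inv\Big(\textstyle\prod_j u_j\Big)\tilde p(t)+(q-q\inv)\,\frac{t^2 p(t)\,\pm\,(t\ \text{or}\ t^2)\,\tilde p(t)}{t^2-1},
\]
with the sign and the monomial $t$ or $t^2$ fixed by the parity of $r$. Since $\tilde p(1)=(-1)^r p(1)$ and $\tilde p(-1)=p(-1)$, the numerator of the last fraction vanishes at $t=\pm1$ and is hence divisible by $t^2-1$; so $p(t)Z(t)$ is a polynomial of degree $\le r$, and it vanishes at $t=0$ since $\prod_j u_j=(-1)^r a_0$. (To avoid any issue in characteristic $2$, prove these assertions first over a $\Z$-algebra in which $2$ is not a zero divisor, such as a suitable polynomial ring in the $u_j$, and then specialize to $S$.) Comparing with the reformulation of (2): $(u_1,\dots,u_r)$--admissibility already forces the recursion \refc{equation:  periodicity condition on omega's}, and \refc{equation: yu wilcox admissibility condition 2} follows from Remark~\ref{remark:  conditions on rho, assuming u-admissibility}, so only the relations \refc{equation: yu wilcox admissibility condition 1} remain to be checked for the sequence $\omega_a:=\zeta_a/(q-q\inv)$, where $Z(t)=\sum_a\zeta_a t^{-a}$. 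For this one extracts the coefficient of $t^\ell$ from the displayed formula: expanding $(t^2-1)\inv=t^{-2}+t^{-4}+\cdots$ turns the fractional term into sums of the shape $\sum_{k\ge1}a_{\ell-2+2k}-\sum_{k\ge1}a_{r-\ell+1-2k}$, after which \refc{equation:  basic relation in ground ring} permits the substitution $(q-q\inv)\omega_0=(q-q\inv)-\rho\inv+\rho$ and \refc{equation: yu wilcox admissibility condition 2} (with $\prod_j u_j=(-1)^r a_0$, $\tilde a_m=(-1)^r a_{r-m}$) permits the cancellation of every term containing $\rho$ or $a_{r-\ell}/a_0$, reducing \refc{equation: yu wilcox admissibility condition 1} to an identity between those geometric sums and the two sums with ceiling/floor bounds on its right side. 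This proves $(3)\implies(2)$; the converse is then immediate, since by the uniqueness above the $\Omega$ satisfying (2) for a given admissible $\rho$ must equal $(\zeta_a/(q-q\inv))_{a\ge0}$, which is $(u_1,\dots,u_r)$--admissible.

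\emph{The main obstacle} is this last, purely combinatorial, identity. The right side of \refc{equation: yu wilcox admissibility condition 1} is written with $\max$, $\min$, and ceiling/floor bounds whose ranges depend on the parity of $r$ and on the position of $\ell$ relative to $r/2$, so matching it against the sums produced by the geometric expansion of $(t^2-1)\inv$ requires a somewhat tedious case analysis. The steps are elementary reindexings of sums of signed elementary symmetric functions, but they hold essentially all of the content; the rest is formal manipulation of Laurent series in $t\inv$ together with the relation \refc{equation:  basic relation in ground ring} and the invertibility of $q-q\inv$ and the $u_j$ in $S$.
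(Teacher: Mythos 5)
First, note that this paper does not actually prove the statement: it is imported verbatim from \cite{goodman-admissibility}, Theorem~4.4, with the equivalence of (1) and (2) being exactly the Wilcox--Yu result recorded as Theorem~\ref{theorem: equivalent conditions for admissibility}. So your proposal cannot be matched against an in-paper argument, only assessed on its own terms. Your logical skeleton is sound and consistent with the paper's surrounding remarks: the observation that \refc{equation:  basic relation in ground ring}, \refc{equation: yu wilcox admissibility condition 1}, \refc{equation: yu wilcox admissibility condition 2} and the recursion \refc{equation:  periodicity condition on omega's} determine $\rho$ (up to the finite set in \refc{equation: yu wilcox admissibility condition 2}) and then $\Omega$ uniquely is stated explicitly in the paper right after Theorem~\ref{theorem: equivalent conditions for admissibility}, and it correctly reduces the whole problem to the single verification that, for each admissible value of $\rho$, the sequence defined by \refc{equation: defn of u-admissibility in non degenerate bmw case} satisfies (2); the converse then follows by uniqueness over the fraction field. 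Your reformulation of the recursion as ``$p(t)W(t)$ is a polynomial of degree $\le r$ with zero constant term,'' the identity $p(t)G(t\inv)=\tilde p(t)$, the vanishing of the numerator $t^2p(t)\pm(t\text{ or }t^2)\tilde p(t)$ at $t=\pm1$, the constant-term cancellation via $\prod_j u_j=(-1)^ra_0$, and the derivation of \refc{equation: yu wilcox admissibility condition 2} from Remark~\ref{remark:  conditions on rho, assuming u-admissibility} all check out, as does your device of verifying the divisibility by $t^2-1$ over a universal ring to avoid the characteristic-$2$ coincidence of the factors $t-1$ and $t+1$.

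The genuine gap is the one you flag yourself: the coefficient-by-coefficient comparison of $p(t)Z(t)$ with the right-hand sides of \refc{equation: yu wilcox admissibility condition 1} is asserted, not performed. This is not a peripheral detail --- it is the entire content of the equivalence $(2)\iff(3)$, since everything else in your argument is formal manipulation of Laurent series, and your $(2)\implies(3)$ direction also rests on it (uniqueness only transfers the conclusion once the $u$--admissible sequence has been shown to solve the Wilcox--Yu system). In particular, the passage from the geometric expansion of $(t^2-1)\inv$ to the sums $\sum_{j=\max(\ell+1,\lceil r/2\rceil)}^{\lfloor(\ell+r)/2\rfloor}a_{2j-\ell}-\sum_{j=\lceil\ell/2\rceil}^{\min(\ell,\lceil r/2\rceil-1)}a_{2j-\ell}$, with its parity- and range-dependent truncations, and the cancellation of the $\rho$-terms against $-\rho(a_\ell-a_{r-\ell}/a_0)$ using \refc{equation: yu wilcox admissibility condition 2}, is precisely where an error or a missed boundary case could hide. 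Until that case analysis is written out, you have a correct and well-organized reduction of the theorem to a finite computation, but not a proof; the computation is carried out in the cited reference \cite{goodman-admissibility}, and any complete write-up would have to reproduce it or something equivalent.
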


 \section{Semi--admissibility} \label{section: semi admissibility}
 Let $\both{n, S, r} = \both{n, S, r}(u_1, \dots, u_r)$  denote either the cyclotomic BMW algebra $\bmw{n,S, r}$,  
 with parameters $\rho$,  $q$,   $\Omega = (\omega_a)_{a \ge 0}$  and 
 $u_1, \dots, u_r$,  
 or the degenerate cyclotomic BMW algebra $\degbmw{n, S, r}$,
  with parameters   $\Omega = (\omega_a)_{a \ge 0}$  and  $u_1, \dots, u_r$,  
over an integral domain $S$.

From here on, we impose the following standing assumption:  
 
 \medskip
\begin{assumption} \label{assumption: torsion free}  The
ground ring $S$  is an integral domain, and the left ideal  $\mathcal A_{2, S, r}\, e_1 \subseteq \mathcal A_{2, S, r}$    is torsion free as an $S$--module. 
 \end{assumption}

 This assumption holds, in particular, whenever $S$ is a field. 

 \medskip
Under  Assumption \ref{assumption: torsion free}, exactly   
 one of the following three possibilities must hold:  
 \begin{enumerate}
 \item  $e_1 = 0$ in $\both{2, S, r}$.   In this case,   $e_{n-1} = 0$ in 
 $\both{n, S, r}$  for all $n\ge 2$.  
 The (degenerate)  cyclotomic BMW algebras reduce to (degenerate) cyclotomic Hecke algebras.
 \item   The parameters are admissible, i.e.\
 $\{e_1, y_1 e_1, \dots, y_1^{r-1} e_1\}$ is linearly independent over $S$ in $\mathcal A_{2, S, r}$.  This case has been studied in the literature and is  well understood.
 \item   There is a $d$ with    $0 < d < r$   such that $\{e_1, y_1 e_1, \dots, y_1^{d-1} e_1\}$ is linearly independent over $S$ in $\mathcal A_{2, S, r}$,  but $\{e_1, y_1 e_1, \dots, y_1^{d} e_1\}$ is linearly dependent.    This case remains to be investigated.
 \end{enumerate}
 
\begin{definition} \label{definition: semi admissible}
 Consider the cyclotomic or degenerate cyclotomic BMW algebras $\both{n, S, r}$ over an integral domain $S$ with suitable parameters.  Let $0 < d  < r$.    We say that the parameters are {\em $d$--semi--admissible}  if
$\{e_1, y_1 e_1, \dots, y_1^{d-1} e_1\}$ is linearly independent over $S$ in $\mathcal A_{2, S,r}$,  but $\{e_1, y_1 e_1, \dots,  y_1^{d} e_1\}$ is linearly dependent. 
\end{definition}

Suppose $d$--semi--admissibility of the parameters.    Then there is a polynomial  of $p_0(u) \in S[u]$ of degree $d$
 such that  $p_0(y_1) e_1 = 0$ but $r(y_1) e_1 \ne 0$  for any non--zero polynomial $r(u) \in S[u]$  of degree less than $d$.      Let $F$ denote the field of fractions of $S$, and write 
 $p(u) = (u - u_1)\cdots (u- u_r) \in S[u]$.     Since $p(y_1) = 0$, it follows that
 $p_0(u)$  divides $p(u)$  in $F[u]$.    Because of unique factorization in
 $F[u]$,  we have (after renumbering the roots $u_i$ of $p(u)$)   $p_0(u)  = \alpha (u-u_1) \cdots (u - u_d)$ for some non--zero $\alpha$ in $F$.   In fact $\alpha \in S$,  since it is the leading coefficient of $p_0(u)$.    Then we have 
 $\alpha (y_1 - u_1) \cdots (y_1 - u_d) e_1 = 0$.   Because we assumed $\mathcal A_{2, S, r} e_1$ is torsion--free over $S$, we can conclude that  $ (y_1 - u_1) \cdots (y_1 - u_d) e_1 = 0$.    Thus without loss of generality,  we can take $p_0(u) = (u - u_1) \cdots (u- u_d)$.  
 
 \begin{assumption}
 For the remainder of this section, we assume the parameters of $\both{n, S, r}$ are $d$--semi--admissible for some $d$ with   $0 < d < r$.    Assume without loss of generality that $p_0(y_1) e_1 = 0$,  where
 $
 p_0(u) = (u - u_1) \cdots (u- u_d) = \sum_{j = 0}^d  b_j u^j.  
$
\end{assumption}

\begin{lemma}  \label{lemma: homomorphism from A n r to A n d}
There is a surjective homomorphism $\theta : 
  \both{n, S, r}(u_1, \dots, u_r) \to \both{n, S, d}(u_1, \dots, u_d)$  taking generators to generators.   Moreover, $\theta$ maps the ideal generated by $e_{n-1}$ in   $\both{n, S, r}(u_1, \dots, u_r)$ onto the ideal generated by $e_{n-1}$ in  $\both{n, S, d}(u_1, \dots, u_d)$. 
\end{lemma}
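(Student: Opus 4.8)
The plan is to construct $\theta$ at the level of presentations, and then check that the defining relations of $\both{n, S, r}(u_1, \dots, u_r)$ map to valid relations in $\both{n, S, d}(u_1, \dots, u_d)$. Both algebras are quotients of the same affine (resp.\ degenerate affine) BMW algebra $\widehat{\mathcal A}_{n, S}$ with the \emph{same} underlying parameters $\rho$, $q$, and $\Omega = (\omega_a)_{a \ge 0}$; they differ only in which cyclotomic polynomial is imposed on $y_1$. So to build $\theta$ it suffices to show that the cyclotomic relation of the source, $(y_1 - u_1) \cdots (y_1 - u_r) = 0$, is a consequence of the relations of the target. Since we have arranged $p_0(u) = (u - u_1) \cdots (u - u_d)$ to divide $p(u) = (u - u_1) \cdots (u - u_r)$ in $S[u]$ (with quotient $(u - u_{d+1}) \cdots (u - u_r) \in S[u]$), in $\both{n, S, d}(u_1, \dots, u_d)$ we have $p_0(y_1) = 0$, hence $p(y_1) = p_0(y_1) \cdot (y_1 - u_{d+1}) \cdots (y_1 - u_r) = 0$. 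Therefore the identity map on generators of $\widehat{\mathcal A}_{n, S}$ descends to a homomorphism $\theta$ from $\both{n, S, r}(u_1, \dots, u_r)$ onto $\both{n, S, d}(u_1, \dots, u_d)$, taking each generator to the generator of the same name; surjectivity is immediate because the images of the generators generate the target.

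The second assertion about ideals is then essentially formal. Write $I_r$ for the ideal generated by $e_{n-1}$ in $\both{n, S, r}(u_1, \dots, u_r)$ and $I_d$ for the ideal generated by $e_{n-1}$ in $\both{n, S, d}(u_1, \dots, u_d)$. Since $\theta$ is a surjective algebra homomorphism with $\theta(e_{n-1}) = e_{n-1}$, the image $\theta(I_r)$ is the ideal generated by $\theta(e_{n-1}) = e_{n-1}$ in the image $\theta(\both{n, S, r}) = \both{n, S, d}$; that is, $\theta(I_r) = I_d$. This is the general fact that a surjective ring homomorphism carries the ideal generated by a set onto the ideal generated by the image of that set. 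No further computation is needed here.

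I do not expect a genuine obstacle in this lemma; the only point requiring a little care is the divisibility $p_0(u) \mid p(u)$ in $S[u]$ (as opposed to merely $F[u]$), but this has already been established in the discussion immediately preceding the lemma statement, where the roots of $p$ were renumbered so that $p_0(u) = (u - u_1) \cdots (u - u_d)$ with $p_0$ monic, whence the complementary factor $(u - u_{d+1}) \cdots (u - u_r)$ also lies in $S[u]$. In the non-degenerate case one should also note that the $u_i$ are invertible in $S$, so $\both{n, S, d}(u_1, \dots, u_d)$ is a legitimate cyclotomic BMW algebra in the sense of Definition \ref{cyclo BMW}; this is inherited from the hypothesis on $\both{n, S, r}(u_1, \dots, u_r)$. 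With those remarks in place the proof is just the observation that $\theta$ respects the cyclotomic relation and that surjective homomorphisms push ideals forward to ideals.
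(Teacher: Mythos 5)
Your proof is correct and follows essentially the same route as the paper: the target algebra's generators satisfy the defining relations of the source (since $p_0(u)$ divides $p(u)$ in $S[u]$, the relation $p_0(y_1)=0$ forces $p(y_1)=0$), and a surjective algebra homomorphism carries the ideal generated by $e_{n-1}$ onto the ideal generated by its image. The paper states the first point as "evident"; your explicit verification of the divisibility and the descent of the cyclotomic relation is just a fuller writing-out of the same argument.
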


\begin{proof}  The existence of the surjective homomorphism $\theta$ is evident because  the generators of \break $\both{n, S,  d}(u_1, \dots, u_d)$  satisfy the defining relations of 
  $\both{n, S, r}(u_1, \dots, u_r)$.   
  
 In general, if $A$ and $B$ are algebras and $\varphi: A \to B$ is a surjective algebra homomorphism, then for any $e \in A$, we have $\varphi(AeA) = B \varphi(e) B$.  In particular, $\theta$ maps the ideal generated  by 
 $e_{n-1}$ in $\both{n, S r}(u_1, \dots, u_r)$ onto the ideal generated by $e_{n-1}$ in $\both{n, S, d}(u_1, \dots, u_e)$.  
\end{proof}

 \begin{lemma}  \label{lemma:   semi--admissibility implies admissibility in fewer variables}   \mbox{}
  \begin{enumerate}
 \item   \label{lemma:   semi--admissibility implies admissibility in fewer variables item 1}
 The sequence \ $\Omega = (\omega_a)_{a \ge 0}$   satisfies the recurrence relation
 $\sum_{j = 0}^d  b_j  \omega_{j + \ell} = 0  \text{ for all }  \ell \ge 0.$ 
 \item  \label{lemma:   semi--admissibility implies admissibility in fewer variables item 2}
 The parameters  
 $\Omega = (\omega_a)_{a \ge 0} \text{ and } u_1, \dots u_d$
   in the degenerate case 
 (respectively, 
  $\rho, q, \  \Omega = (\omega_a)_{a \ge 0}$,   and  $u_1, \dots u_d$
in the non-degenerate case)   are admissible.  That is,  the set \break $\{e_1, y_1 e_1, \dots,  y_1^{d-1} e_1\}$  is linearly independent
 over $S$  in $\both{2, S, d}(u_1, \dots, u_d)$.   
 \end{enumerate}
 \end{lemma}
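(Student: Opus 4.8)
The plan for the first assertion is to mimic the proof of Lemma~\ref{lemma: periodicity of omegas when e1 is torsion free}, with $p_0$ in place of $p$. Note first that $e_1\neq 0$ in $\both{2,S,r}$: since $d\ge 1$, the singleton $\{e_1\}$ is part of the linearly independent set $\{e_1,y_1e_1,\dots,y_1^{d-1}e_1\}$. Combined with Assumption~\ref{assumption: torsion free}, this shows $e_1$ is not a torsion element over $S$ in $\both{2,S,r}$. Now multiply the relation $p_0(y_1)e_1=\sum_{j=0}^d b_j y_1^j e_1=0$ on the left by $y_1^\ell$, then multiply from the left by $e_1$; using the compression relations $e_1y_1^ae_1=\omega_ae_1$ (for $a>0$) and the idempotent relation $e_1^2=\omega_0 e_1$ gives $\big(\sum_{j=0}^d b_j\,\omega_{j+\ell}\big)e_1=0$, and non-torsionality yields $\sum_{j=0}^d b_j\,\omega_{j+\ell}=0$ for every $\ell\ge 0$.

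For the second assertion the strategy is to verify that the parameters, now paired with the truncated list $u_1,\dots,u_d$ (and with the unchanged $\rho,q$ in the non-degenerate case), satisfy the explicit equations that characterize admissibility in $d$ variables, and then to invoke Theorem~\ref{theorem:  equivalent admissibility conditions for degenerate algebras} (degenerate case), respectively Theorem~\ref{theorem: equivalence of admissibility and u-admissibility in the non-degenerate case} or its Wilcox--Yu extension when $q-q\inv=0$ (non-degenerate case), applied to $\both{2,S,d}(u_1,\dots,u_d)$. The first assertion already supplies the recursion relation \refc{equation:  periodicity condition on omega's} in its $d$-variable form (with the $a_j$ replaced by the coefficients $b_j$ of $p_0$), so what remains is to produce the finitely many ``initial'' relations among $\omega_0,\dots,\omega_{d-1}$ and the $b_j$: equation \refc{equation:  admissibility relation degenerate} with $r$ replaced by $d$ in the degenerate case, equations \refc{equation: yu wilcox admissibility condition 1} and \refc{equation: yu wilcox admissibility condition 2} with $r$ replaced by $d$ in the non-degenerate case. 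The observation that makes this work is that the derivations of these relations from linear independence in \cite{goodman-degenerate-admissibility}, \cite{Wilcox-Yu}, and \cite{rui-2008} proceed in two stages: one first establishes universal identities inside the underlying affine algebra ($\affdegbmw{2,S}$ or $\abmw{2,S}$), referring to no cyclotomic relation, and then reduces the powers of $y_1$ occurring there by means of a relation $f(y_1)e_1=0$, reading off coefficients via the linear independence of $\{e_1,y_1e_1,\dots\}$. In the present setting $p_0(y_1)e_1=0$ holds (as established just before the statement, from $d$-semi-admissibility and torsion-freeness), so carrying out the reduction modulo $p_0$ rather than modulo $p$ brings all $y_1$-powers down to exponents $<d$, and only the linear independence of the $d$ elements $\{e_1,y_1e_1,\dots,y_1^{d-1}e_1\}$ is invoked --- exactly the hypothesis of $d$-semi-admissibility. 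Reading off coefficients then yields precisely the $d$-variable admissibility equations, and the cited equivalence theorems conclude that $\{e_1,y_1e_1,\dots,y_1^{d-1}e_1\}$ is linearly independent over $S$ in $\both{2,S,d}(u_1,\dots,u_d)$.

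In the generic case $\Omega\neq 0$ there is a more transparent argument worth recording. By $d$-semi-admissibility together with $p_0(y_1)e_1=0$ and torsion-freeness, the left ideal $M:=\both{2,S,r}(u_1,\dots,u_r)\,e_1$ is a free $S$-module with basis $\{e_1,y_1e_1,\dots,y_1^{d-1}e_1\}$ (multiplying $p_0(y_1)e_1=0$ repeatedly by $y_1$ reduces every higher power $y_1^k e_1$). Since $p_0(y_1)$ annihilates $M$, the left action of the affine algebra on $M$ factors through $\both{2,S,d}(u_1,\dots,u_d)$, making $M$ a module over it. Passing to the fraction field $F$ of $S$ and using the easy identity $e_1\both{2,F,r}\,e_1\subseteq Fe_1$, one finds that $\overline{y_1^j e_1}\in\both{2,F,d}$ acts on the basis vector $y_1^k e_1$ of $M\otimes_S F$ by $\overline{y_1^j e_1}\cdot y_1^k e_1=\omega_k\,y_1^j e_1$. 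Hence if $\sum_{j=0}^{d-1}c_j\,\overline{y_1^j e_1}=0$ in $\both{2,F,d}$, then applying this operator to $y_1^k e_1$ gives $\omega_k\sum_j c_j\,y_1^j e_1=0$; choosing $k<d$ with $\omega_k\neq 0$ (possible by the first assertion once $\Omega\neq 0$) and using the linear independence of $\{y_1^j e_1\}_{j<d}$ forces all $c_j=0$, which is the desired independence over $F$, hence over $S$. When $\Omega=0$, so $e_1^2=0$, the module $M$ is annihilated by $e_1$ and this shortcut fails; that case is covered by the previous paragraph.

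The step I expect to be the main obstacle is the bookkeeping in the second paragraph: one must check that the computations in \cite{goodman-degenerate-admissibility}, \cite{Wilcox-Yu}, and \cite{rui-2008} really do factor as ``universal identity in the affine algebra, then reduction modulo the cyclotomic polynomial, then linear independence'', so that replacing $p$ by $p_0$ (equivalently, $r$ by $d$) is harmless --- in particular where $r$ enters the index ranges of \refc{equation: yu wilcox admissibility condition 1}. This is most delicate in the non-degenerate case, where \refc{equation: yu wilcox admissibility condition 2} must reemerge as a constraint forcing $\rho\in\{b_0,-b_0\}$ when $d$ is odd and $\rho\in\{q\inv b_0,-qb_0\}$ when $d$ is even, with $b_0=(-1)^d\prod_{i=1}^d u_i$: this is a genuinely new relation between the fixed parameter $\rho$ and only the first $d$ of the roots $u_i$, which has to be extracted from $p_0(y_1)e_1=0$ together with the unwrapping and tangle relations.
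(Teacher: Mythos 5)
Your proof is correct and follows essentially the same route as the paper: part (1) is the identical compression--relation computation, and part (2) proceeds, as in the paper, by rerunning the derivation of the explicit admissibility relations with $p_0$ in place of $p$ (so $r$ replaced by $d$ and $a_j$ by $b_j$), using only the linear independence of $\{e_1, y_1e_1,\dots,y_1^{d-1}e_1\}$ in $\both{2,S,r}$, and then invoking the equivalence theorems (Theorem \ref{theorem:  equivalent admissibility conditions for degenerate algebras}, resp.\ Wilcox--Yu) for $\both{2,S,d}(u_1,\dots,u_d)$ --- the "bookkeeping" you flag is asserted rather than reverified in the paper as well. Your supplementary module-theoretic argument for the case $\Omega\neq 0$ (letting $\both{2,S,d}$ act on the free module $\both{2,S,r}\,e_1$) is a valid and more self-contained alternative not present in the paper, though as you note it does not cover $\Omega=0$.
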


 \begin{proof}   For part \refc{lemma:   semi--admissibility implies admissibility in fewer variables item 1},    multiply the equation $p_0(y_1) e_1 = 0$  by   $e_1  y_1^\ell$  on the left, and employ the idempotent and compression relations to get $\sum_{j = 0}^d  b_j  \omega_{j + \ell}\, e_1 = 0  $.  The conclusion follows since $e_1$ is not a torsion element over $S$.  
 
 We should pause to see why something needs to be proved for part \refc{lemma:   semi--admissibility implies admissibility in fewer variables item 2}.   We have assumed that \break $\{e_1, y_1 e_1, \dots, y_1^{d-1} e_1\} \subseteq \both{2, S, r}(u_1, \dots, u_r)$  is linearly independent, and we have to prove that  \break $\{e_1, y_1 e_1, \dots, y_1^{d-1} e_1\} \subseteq \both{2, S, d}(u_1, \dots, u_d)$ is linearly independent.    The latter set is the image of the former under the algebra homomorphism
 $\theta : 
  \both{2, S, r}(u_1, \dots, u_r) \to \both{2, S, d}(u_1, \dots, u_d).$

Consider the degenerate case.  Apply the proof of 
 $\refc{theorem:  equivalent admissibility conditions for degenerate algebras item 1} \implies \refc{theorem:  equivalent admissibility conditions for degenerate algebras item 3}$  
 in Theorem \ref{theorem:  equivalent admissibility conditions for degenerate algebras} 
to  the linearly independent set  
$\{e_1, y_1 e_1, \dots, y_1^{d-1} e_1\} \subseteq \degbmw{2, S, r}(u_1, \dots, u_r)$.
This yields the analogue of condition (\ref{equation:  admissibility relation degenerate}) with $r$ replaced by $d$ and $a_j$ by $b_j$, namely
\begin{equation}  \label{equation:  d--admissibility relation degenerate}
 \sum_{\mu = 0}^{d-j-1}  \omega_\mu  b_{\mu + j +1}  = - 2 \delta_{(\text{$d-j$ is odd})}\  b_j  + \delta_{(\text{$j$ is even})} \  b_{j+1},
\end{equation}
for $0 \le j \le d-1$.  Part 
 \refc{lemma:   semi--admissibility implies admissibility in fewer variables item 1} 
   of this lemma together with the implication
    $\refc{theorem:  equivalent admissibility conditions for degenerate algebras item 3} \implies \refc{theorem:  equivalent admissibility conditions for degenerate algebras item 1}$  
  in Theorem \ref{theorem:  equivalent admissibility conditions for degenerate algebras},  applied now to $\degbmw{2, S, d}(\Omega; u_1, \dots, u_d)$,  gives the conclusion  
  \eqref{lemma:   semi--admissibility implies admissibility in fewer variables item 2}.

  For the non-degenerate case, one uses the theorem of Wilcox and Yu  
  (Theorem \ref{theorem: equivalent conditions for admissibility} in the case $q - q\inv \ne 0$,   or~\cite{Wilcox-Yu3} in general)   in the same manner. 
 \end{proof}

 \subsection{A spanning set for ${ \bmw{n, S, r}}$}
 
 In this section,  write $\bmw{n}$  for $\bmw{n, S, r}(\rho, q, \Omega;  u_1, \dots, u_r)$.

 Define elements  $y'_j$  and $y''_j$ for $j \ge 1$ in the affine or cyclotomic BMW algebras
 by  
$$ \begin{aligned}
y'_1 &= y''_1 = y_1, \\
y'_j  &= g_{j-1} y'_{j-1} g_{j-1}\inv \quad   \text{ and } \quad
y_j'' = g_{j-1}\inv  y''_{j-1}  g_{j-1} \text{ for }  j \ge 2.
\end{aligned}
$$
Since the elements $y'_j$ and $y''_j$  are all conjugate, we have
$
p(y'_j) = (y'_j - u_1) \cdots (y'_j - u_r) = 0,
$ 
for all $j$,  and similarly for the elements $y''_j$.  

\begin{lemma}  \label{lemma:  commutation of y', y''}
In any affine or cyclotomic BMW algebra,
$e_i$  and $g_i$  commute with $y'_j$  and $y''_j$   if $j \not\in \{i, i+1\}$. 
\end{lemma}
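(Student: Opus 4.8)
The plan is to prove the commutation relations by induction on $j$, handling the two cases $j < i$ and $j > i+1$ together, since the structure of the argument is the same in both. The base of the induction is essentially the case where $y'_j$ (or $y''_j$) equals $y_1$, i.e.\ $j = 1$; then the claim is that $e_i$ and $g_i$ commute with $y_1$ whenever $i \ge 2$, which is immediate from the commutation relations in Definition \ref{definition: cyclotomic BMW}, parts (5)(b) and (2)(b): $y_1 e_j = e_j y_1$ and $y_1 g_j = g_j y_1$ for $j \ge 2$. For the degenerate algebras the corresponding relations are in Definition \ref{Waff relations}, but since the lemma is stated for affine or cyclotomic BMW algebras only, I would phrase everything in terms of Definition \ref{definition: cyclotomic BMW}.

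For the inductive step, recall $y'_j = g_{j-1} y'_{j-1} g_{j-1}\inv$. Suppose $j \notin \{i, i+1\}$ and that the statement holds for $y'_{j-1}$. I would split into subcases according to how $i$ and $j-1$ compare. If $j - 1 \notin \{i, i+1\}$ as well — which happens precisely when $j \ge i+3$ or $j \le i-1$ — then by induction $g_{j-1}$, $y'_{j-1}$, $g_{j-1}\inv$ each commute with $e_i$ and with $g_i$ (the fact that $g_{j-1}$ commutes with $e_i$ and $g_i$ follows from the commutation relations (2)(a) and (5)(a) since $|i-(j-1)| \ge 2$ in this range), so $y'_j$ does too. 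The only remaining subcase is $j - 1 \in \{i, i+1\}$ with $j \notin \{i, i+1\}$, which forces $j = i+2$ and $j - 1 = i+1$. Here $y'_{i+2} = g_{i+1} y'_{i+1} g_{i+1}\inv$, and I must show this commutes with $e_i$ and $g_i$. Now $g_{i+1}$ does not commute with $e_i$ or $g_i$, so the naive argument fails; instead I would use the tangle and braid relations. By induction $y'_{i+1}$ commutes with $e_{i+1}$ and $g_{i+1}$ — wait, that is not what induction gives; induction on $y'_{i+1}$ gives commutation with $e_k, g_k$ for $k \notin \{i, i+2\}$, which does not directly include $k = i+1$ or $k = i$.

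The main obstacle is exactly this last subcase $j = i+2$: one needs to show $y'_{i+2}$ commutes with $e_i$ and $g_i$, and the conjugating element $g_{i+1}$ genuinely interacts with both. The cleanest route I see is to observe that $y'_{i+1}$ lies in the subalgebra generated by $\{g_1,\dots,g_i, e_1,\dots,e_i, y_1\}$ — this follows by a secondary induction, since $y'_{i+1} = g_i y'_i g_i\inv$ and inductively $y'_i$ lies in the subalgebra on $\{g_1,\dots,g_{i-1},e_1,\dots,e_{i-1},y_1\}$ — and that all these generators commute with $g_{i+1}$ by the commutation relations ($|k - (i+1)| \ge 2$ for $k \le i-1$; and $g_i, e_i$ need the braid relation, so this needs more care). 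Alternatively, and more robustly, I would prove directly that $g_{i+1} y'_{i+1} g_{i+1}\inv$ commutes with $g_i$ using $g_i g_{i+1} g_i = g_{i+1} g_i g_{i+1}$ to rewrite $g_i g_{i+1} = g_{i+1} g_i g_{i+1} g_i\inv$ and pushing $g_i$ through, together with the fact (from the secondary induction) that $y'_{i+1}$ commutes with $g_i g_{i+1} g_i\inv$-type expressions; and for $e_i$ one uses the tangle relations $g_i g_{i+1} e_i = e_{i+1} e_i$ and $e_i g_{i+1} g_i = e_i e_{i+1}$ to move $e_i$ across the conjugation. The routine braid-relation manipulations I would leave to the reader, noting only that they are the standard computations showing $y'_j$ behaves like a Jucys–Murphy element — indeed this lemma is exactly the assertion that $y'_j$ (resp.\ $y''_j$) behaves, with respect to the generators indexed away from $j$, the way the Jucys–Murphy element $y_j$ behaves in the degenerate algebra, and the proof is the same bookkeeping as in the finite-dimensional BMW case.
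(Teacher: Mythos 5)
Your reduction is sound up to the critical case, but the critical case is left open. The induction on $j$ correctly disposes of $j\le i-1$ and $j\ge i+3$ (where every factor in the conjugation commutes with $e_i$ and $g_i$ either by the far-commutation relations or by the inductive hypothesis), and you correctly isolate $j=i+2$ as the only case with content. However, your treatment of that case does not close. The one-step unwinding $y'_{i+2}=g_{i+1}y'_{i+1}g_{i+1}^{-1}$ is the wrong starting point, because, as you yourself observe, neither the inductive hypothesis nor the statement of the lemma gives you any commutation between $y'_{i+1}$ and the elements $g_i, g_{i+1}, e_i, e_{i+1}$ that appear in the manipulation; the ``secondary induction'' on subalgebra membership does not repair this, since $g_i$ and $e_i$ lie in the relevant subalgebra and do not commute with $g_{i+1}$. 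Deferring the remaining work as ``routine braid-relation manipulations'' is not acceptable here: those manipulations are the entire content of the lemma.

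The paper closes the case $j=i+2$ by unwinding \emph{two} steps, $y'_{i+2}=(g_{i+1}g_i)\,y'_i\,(g_i^{-1}g_{i+1}^{-1})$, and using the conjugation identity $g_i^{-1}g_{i+1}^{-1}e_i\,g_{i+1}g_i=e_{i+1}$ (equivalently $e_i\,(g_{i+1}g_i)=(g_{i+1}g_i)\,e_{i+1}$), which follows from exactly the tangle relations you quote, namely $g_{i+1}g_i e_{i+1}=e_i e_{i+1}$ and $e_{i+1}g_ig_{i+1}=e_{i+1}e_i$. This shifts $e_i$ past the conjugating pair into $e_{i+1}$, and $e_{i+1}$ \emph{does} commute with $y'_i$ by the already-established easy case (since $i\notin\{i+1,i+2\}$); shifting back out on the other side gives $e_iy'_{i+2}=y'_{i+2}e_i$. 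The commutation with $g_i$ is handled the same way using the braid relation in place of the tangle relations. So the missing idea is precisely the two-step conjugation that converts the problem about $e_i$ versus $y'_{i+2}$ into the already-solved problem about $e_{i+1}$ versus $y'_i$; with that supplied, your induction scheme becomes a complete proof essentially identical to the paper's.
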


\begin{proof}  We will prove the commutation relations for the element $y'_j$; the proof for the elements $y''_j$ is essentially the same.

For $i \ge 2$,  $e_i$ and $g_i$  commute with $y_1$ and with
$g_1\ppm, \dots, g_{i-2}\ppm$, hence with $y'_j$  for $j < i$.       
One sees from the defining relations that 
\begin{equation} \label{equation: shift by two}
g_i g_{i+1} e_i g_{i+1}\inv g_i\inv = e_{i+1} \quad
\text{and} \quad
{g_i }\inv {g_{i+1}}\inv  e_i g_{i+1} g_i = e_{i+1}.
\end{equation}   
for all $i$.   
Using this,  and the already established commutation relation $[e_{i+1}, y'_i] = 0$, we have
\begin{equation}
\begin{aligned}
e_i y'_{i+2} &= e_i (g_{i+1} g_i)\, y'_i\, (g_i\inv g_{i+1}\inv) \\
&=  (g_{i+1} g_i)e_{i+1}\, y'_i\, (g_i\inv g_{i+1}\inv)  \\
&=  (g_{i+1} g_i)\, y'_i\, e_{i+1} (g_i\inv g_{i+1}\inv) \\
&=  (g_{i+1} g_i)\, y'_i\,  (g_i\inv g_{i+1}\inv) \,e_i = y'_{i+2}\,  e_i. 
\end{aligned}
\end{equation}
Similarly, using the braid relations and the commutation relation 
$[g_{i+1}, y'_i]  = 0$, we  obtain that $[g_i, y'_{i+2}]  = 0$.
 If $j \ge  i+3$, we have
 \begin{equation} \label{equation for commutation of y', y''}
y'_j =  (g_{j-1} \cdots g_{i+2}) y'_{i+2} (g_{i+2}\inv \cdots g_{j-1}\inv),
 \end{equation}
 and  we see that $g_i$ and $e_i$ commute with $y'_j$ because they commute with all the factors on the right hand side of (\ref{equation for commutation of y', y''}).
\end{proof}

\begin{lemma} \label{lemma:  reduction lemma for powers of y', y''}
In $\bmw{n}$, we have
  $p_0(y'_j)\, e_i = e_i  \,p_0(y'_j) = 0$  for all $j  \ne  i+ 1$.  
The same statement holds with $y'_j$  replaced by $y''_j$.  
\end{lemma}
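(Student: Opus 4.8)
The plan is to establish the single identity $p_0(y'_j)\,e_i = 0$ for all $i,j$ with $j \ne i+1$ (working in $\bmw{n}$ with the relations of Definition~\ref{definition:  cyclotomic BMW} and the standing hypothesis $p_0(y_1)\,e_1 = 0$), and then to read off the remaining three forms. Observe first that the involution $*$ is an $S$-algebra anti-automorphism fixing each $e_i$ and $g_i$ and fixing $y_1$, so that an induction on $j$ from $y'_1 = y''_1 = y_1$ and $y'_j = g_{j-1}\,y'_{j-1}\,g_{j-1}\inv$ gives $(y'_j)^* = y''_j$, hence $(y''_j)^* = y'_j$ and $(p_0(y'_j)\,e_i)^* = e_i\,p_0(y''_j)$. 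Thus, once $p_0(y'_j)\,e_i = 0$ is known, applying $*$ gives $e_i\,p_0(y''_j) = 0$; the identity $p_0(y''_j)\,e_i = 0$ follows by repeating the argument below with $g_k$ and $g_k\inv$ interchanged throughout (so that the recursions for $y'$ and $y''$, and the two identities in~\refc{equation: shift by two}, are swapped), and applying $*$ to it then gives $e_i\,p_0(y'_j) = 0$. So it suffices to treat $p_0(y'_j)\,e_i$.

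The crux is the ``diagonal'' case $j = i$, which I would prove by induction on $i$. The base $i = 1$ is exactly the hypothesis $p_0(y_1)\,e_1 = 0$. For the step, note that $g_{i+1}$ commutes with $y'_i$ by Lemma~\ref{lemma:  commutation of y', y''}, so conjugation by $g_i g_{i+1}$ sends $y'_i$ to $y'_{i+1}$ and, by the first identity in~\refc{equation: shift by two}, sends $e_i$ to $e_{i+1}$; applying this inner automorphism to the element $p_0(y'_i)\,e_i$ gives $p_0(y'_{i+1})\,e_{i+1} = g_i g_{i+1}\,\bigl(p_0(y'_i)\,e_i\bigr)\,g_{i+1}\inv g_i\inv = 0$. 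The key point is that, although $y'_i$ and $e_i$ do not commute---so the hypothesis cannot be transported by the commutation lemma alone---the pair $(y'_i,e_i)$ is \emph{simultaneously} conjugate, via $g_{i-1}g_i$, to $(y'_{i-1},e_{i-1})$.

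It remains to treat $j \ne i,i+1$, where $e_i$ (hence $p_0(y'_j)$) commutes with $y'_j$ by Lemma~\ref{lemma:  commutation of y', y''}. If $j < i$, I would induct on $i-j$: from $e_i = e_i e_{i-1} e_i$ and sliding $p_0(y'_j)$ past the leading $e_i$ one gets $p_0(y'_j)\,e_i = e_i\,p_0(y'_j)\,e_{i-1}\,e_i$, and $p_0(y'_j)\,e_{i-1}$ is $0$ by induction when $i-1 > j$ and by the diagonal case when $i-1 = j$. If $j > i+1$, I would induct on $j-i$; for $j \ge i+3$ the step is the mirror manipulation $p_0(y'_j)\,e_i = e_i\,p_0(y'_j)\,e_{i+1}\,e_i$ via $e_i = e_i e_{i+1} e_i$, reducing to $p_0(y'_j)\,e_{i+1}$. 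The base case $j = i+2$ requires one further relation: writing $y'_{i+2} = g_{i+1}g_i\,y'_i\,g_i\inv g_{i+1}\inv$ and using the rewriting $g_i\inv g_{i+1}\inv e_i = e_{i+1}\,g_i\inv g_{i+1}\inv$ of the second identity in~\refc{equation: shift by two}, one finds $p_0(y'_{i+2})\,e_i = g_{i+1}g_i\,\bigl(p_0(y'_i)\,e_{i+1}\bigr)\,g_i\inv g_{i+1}\inv = 0$, the inner factor vanishing by the case $j < i$ applied with ``$i$'' equal to $i+1$ (which lands on the diagonal).

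I expect the diagonal case $j = i$ to be the only substantial obstacle: every other pair $(i,j)$ with $j \ne i+1$ descends, through the absorption relations $e_i = e_i e_{i\pm1} e_i$ together with the commutation lemma, to a pair nearer the diagonal, whereas the diagonal itself yields only to the simultaneous-conjugation identity above, of which the neighbouring case $j = i+2$ is a minor variant. The remaining steps are routine rewriting with the relations of Definition~\ref{definition:  cyclotomic BMW}, the only bookkeeping being to check that the auxiliary indices occurring in these reductions lie in $\{1,\dots,n-1\}$, which holds precisely because $j = i+1$ is excluded and $j = i$ is handled separately.
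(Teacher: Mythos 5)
Your proposal is correct and follows essentially the same route as the paper's proof: the diagonal case $j=i$ by simultaneous conjugation of the pair $(y'_i,e_i)$ via $g_i g_{i+1}$, the remaining cases by reduction toward the diagonal using the absorption relations $e_i = e_i e_{i\pm1}e_i$ together with Lemma \ref{lemma:  commutation of y', y''}, and the other three identities via the involution $*$ and the mirrored argument for $y''_j$. The only (immaterial) difference is that the paper disposes of all $j\ge i+2$ at once by conjugating $y'_j$ down to $y'_i$ through the full string $g_{j-1}\cdots g_{i+1}g_i$, whereas you peel off one index at a time and conjugate only in the base case $j=i+2$.
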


\begin{proof}  We will verify explicitly that $p_0(y'_j) e_i= 0$ for $j  \ne  i+ 1$.  An identical argument shows the same with $y_j'$ replaced by $y_j''$, and the statement
$e_i p_0(y_j') = e_i p_0(y_j'') = 0$ for $j \ne i+1$ follows as well by applying the involution $*$.  

We first show  that $p_0(y'_j) e_j = 0$ for all $j$, by induction.   This is given for $j = 1$.       If 
 $p_0(y'_j) e_j = 0$  holds for some particular value of $j$,  then
$$
0 = g_j g_{j+1} p_0(y'_j) e_j g_{j+1}\inv g_j\inv  =  p_0(y'_{j+1}) e_{j+1},
$$
and our assertion follows.

Next we check that  $p_0(y'_j) e_i= 0$  for all $j \le i$, by induction on $i - j$.   We have already checked the case $j = i$.    If this holds for some particular $j \le i$,  then
$$
p_0(y'_j)  e_{i+1} = p_0(y'_j)  e_{i+1} e_{i} e_{i+1}  =  e_{i+1} p_0(y'_j)  e_{i} e_{i+1} = 0.   
$$
It remains to check that $p_0(y'_j) e_i= 0$ for  $i \le j-2$.    We have
$$
\begin{aligned}
p_0(y'_j) \, e_i &=   g_{j-1}\cdots     (g_{i+1} g_i) p_0(y'_i ) ({g_i}\inv {g_{i+1}}\inv) \cdots {g_{j-1}}\inv \, e_i \\
&=  g_{j-1}\cdots     (g_{i+1} g_i)  [p_0(y'_i ) e_{i+1}  ]  ({g_i}\inv {g_{i+1}}\inv) \cdots {g_{j-1}}\inv= 0,
\end{aligned}
$$
since $p_0(y'_i ) e_{i+1} = 0$ by the previous part of the proof.  
\end{proof}

We now describe a certain basis of the affine BMW algebra  $\abmw{n} = \abmw{n, S}(\rho, q, \Omega)$  that was introduced in 
~\cite{goodman-2008},  Section 3.2.    Given a permutation $\pi \in \S_n$,   with
reduced expression $\pi = s_{i_1} s_{i_2} \cdots s_{i_\ell}$,  let
$g_\pi =  g_{i_1} g_{i_2} \cdots g_{i_\ell}$  in $\abmw{n}$;  in fact,  $g_\pi$ is independent of the choice of the reduced expression of $\pi$,  see~\cite{goodman-2008}, Section 2.4.  Fix an integer $f$ with
$0 \le 2f \le n$, and let 
$\gamma$ be a Brauer diagram with $2f$ horizontal strands and $s = n-2f$ vertical strands.     Then $\gamma$ has a unique factorization in the Brauer algebra of the form
\begin{equation}\label{equation:  factorization of Brauer diagram}
\gamma =   \alpha (e_1 e_3 \cdots e_{2f-1})  \pi   \beta\inv,
\end{equation}
where  $\pi$ is a permutation of $\{2f+1, \dots, n-1, n\}$   and $\alpha$ and $\beta$ are in a certain subset  
 $\dfn$ of $\S_n$ described in~\cite{goodman-2008},  Section 3.2.   Consider a sequence of $n$ integers
 $$(\bm a, \bm b, \bm c) = (a_1, a_3, \dots, a_{2f-1},  b_1, b_3, \dots, b_{2f-1},  c_{2f + 1},  \dots, c_n).$$  
Corresponding to $\gamma$ and the sequence $(\bm a, \bm b, \bm c)$,  we let $T_{\gamma, \bm a, \bm b, \bm c}$
be the following element of  $\abmw{n} $, 
$$
T_{\gamma, \bm a, \bm b, \bm c} =  g_\alpha\, {y''}^{\,\bm a}\,  (e_1 e_3 \cdots e_{2f-1})  g_\pi   {y''}^{\,\bm c}  
 {y'}^{\,\bm b} (g_\beta)^*,
$$
where  
$$ {y''}^{\,\bm a} = {y''_1}^{\,a_1}{y''_3}^{\,a_3}\cdots {y''_{2f-1}}^{a_{2f-1}},$$
 $$ {y'}^{\,\bm b} =  {y'_{2f-1}}^{b_{2f-1}}\cdots {y'_3}^{\,b_3}  {y'_1}^{\,b_1},$$
 and
 $$
 {y''}^{\,\bm c} = {y''_{n}}^{c_{n}}      \cdots      {y''_{2f+2}}^{c_{2f + 2}}     {y''_{2f+1}}^{c_{2f + 1}}
 $$
 
 If $\gamma$ has no horizontal strands (i.e.\ $\gamma$ is a permutation diagram), the elements  $T_{\gamma, \bm a, \bm b, \bm c} $  still make sense, but then $f = 0$,  $\alpha$ and $\beta$ are trivial,  $\gamma = \pi$, and $\bm a$ and $\bm b$ are empty sequences.   We have
 $$
 T_{\gamma, \bm a, \bm b, \bm c}  =  T_{\gamma,  \bm c}  =  g_\gamma  y''^{\,\bm c}.  
 $$

It is shown in~\cite{goodman-2008},  Section 3.2 that the set of $T_{\gamma, \bm a, \bm b, \bm c}$, as $\gamma$ ranges over Brauer diagrams and $(\bm a, \bm b, \bm c)$ ranges over $n$--tuples of integers forms an $S$--basis of $\abmw{n}$, and, moreover,  the subset corresponding to Brauer diagrams with $2 f >0$ horizontal strands, forms  a basis of the ideal $\abmw{n} e_{n-1}  \abmw{n}$.  
 
 Let  $b'(n)$ denote the number of Brauer diagrams on $n$ strands with at least one horizontal strand, 
 $b'(n) =  (2n - 1) !!  -  n!$.   
 
 \begin{lemma}  \label{lemma:  spanning set for the ideal in cyclotomic algebra}
 The ideal $\bmw{n} e_{n-1} \bmw{n}$    is  spanned by a set of  $d^n b'(n)$ elements.   The algebra  $\bmw{n}$  is spanned by a set of 
  $d^n b'(n) + r^n n!$ elements. 
 \end{lemma}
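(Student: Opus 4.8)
The plan is to transport the $S$--basis $\{T_{\gamma, \bm a, \bm b, \bm c}\}$ of the affine algebra $\abmw{n}$ recalled above to the cyclotomic quotient $\bmw{n}$ and there reduce the exponents of the $y$--variables modulo the cyclotomic relation. Let $\theta_0 \colon \abmw{n} \to \bmw{n}$ denote the quotient map. Since the $T_{\gamma, \bm a, \bm b, \bm c}$ span $\abmw{n}$, their images span $\bmw{n}$; and since $\theta_0$ is surjective and fixes $e_{n-1}$, so that $\theta_0(A\, e_{n-1}\, A) = \bmw{n}\, e_{n-1}\, \bmw{n}$, the images of those $T_{\gamma, \bm a, \bm b, \bm c}$ with $\gamma$ having $2f > 0$ horizontal strands span the ideal $\bmw{n}\, e_{n-1}\, \bmw{n}$ (recall these form a basis of $\abmw{n}\, e_{n-1}\, \abmw{n}$, by~\cite{goodman-2008}, Section 3.2). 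Thus I would first reduce matters to the following claim: in $\bmw{n}$, every such image is an $S$--combination of elements $T_{\gamma, \bm a', \bm b', \bm c'}$ of the same Brauer shape $\gamma$ all of whose $y$--exponents lie in $\{0, 1, \dots, d-1\}$ when $\gamma$ has a horizontal strand, and in $\{0, 1, \dots, r-1\}$ when $\gamma$ is a permutation diagram. Granting the claim, the count is immediate: $\bm a$, $\bm b$, $\bm c$ together carry $n$ exponents, so each Brauer diagram with a horizontal strand contributes at most $d^n$ elements and there are $b'(n) = (2n-1)!! - n!$ of them, while each of the $n!$ permutation diagrams contributes at most $r^n$ elements; this gives $d^n b'(n)$ for the ideal and $d^n b'(n) + r^n n!$ for $\bmw{n}$.

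For a permutation diagram $\gamma$ the reduction is routine. Here $T_{\gamma, \bm c} = g_\gamma\, {y''_n}^{c_n}\cdots {y''_1}^{c_1}$, and since each $y''_j$ is conjugate to $y_1$ it satisfies $p(y''_j) = 0$ in $\bmw{n}$ and is invertible with ${y''_j}\inv$ a polynomial in $y''_j$; reducing the exponents one factor at a time, starting from the rightmost factor and leaving already--reduced factors untouched, rewrites $T_{\gamma, \bm c}$ as an $S$--combination of $g_\gamma\, {y''_n}^{c'_n}\cdots {y''_1}^{c'_1}$ with each $c'_j \in \{0, \dots, r-1\}$.

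For $\gamma$ with $2f > 0$ horizontal strands I would use Lemma~\ref{lemma:  reduction lemma for powers of y', y''}, which supplies $p_0(y''_{2k-1})\, e_{2k-1} = e_{2k-1}\, p_0(y''_{2k-1}) = 0$ and the analogue for $y'$, where $\deg p_0 = d$. In $T_{\gamma, \bm a, \bm b, \bm c} = g_\alpha\, {y''}^{\bm a}\, (e_1 e_3 \cdots e_{2f-1})\, g_\pi\, {y''}^{\bm c}\, {y'}^{\bm b}\, (g_\beta)^*$, each factor ${y''_{2k-1}}^{a_{2k-1}}$ of ${y''}^{\bm a}$ commutes with every $e_{2j-1}$, $j \ne k$, by Lemma~\ref{lemma:  commutation of y', y''}, hence can be slid next to its own idempotent $e_{2k-1}$, its exponent reduced to $\{0, \dots, d-1\}$ by the relation above, and the idempotents slid back together to restore the standard shape; thus $\bm a$ is brought into $\{0, \dots, d-1\}^f$, and $\bm b$ is dealt with identically using the involution $*$ (or the relations $e_{2k-1}\, p_0(y'_{2k-1}) = 0$ directly). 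The step I expect to be the main obstacle is the reduction of the exponents $\bm c$, which live on the ``vertical--strand'' variables $y''_{2f+1}, \dots, y''_n$: these are not adjacent to any idempotent, and a priori they satisfy only the degree--$r$ relation $p(y''_j) = 0$, so the argument just given only brings $\bm c$ into $\{0, \dots, r-1\}$ -- which, since $d < r$, is too large for the bound $d^n b'(n)$. To reach $\{0, \dots, d-1\}$ one must exploit that $\gamma$ genuinely has a horizontal strand: I would push each vertical--strand variable against the block $e_1 e_3 \cdots e_{2f-1}\, g_\pi$ and use the tangle and unwrapping relations, together with Lemma~\ref{lemma:  commutation of y', y''}, to re--express it through a horizontal--strand variable, after which Lemma~\ref{lemma:  reduction lemma for powers of y', y''} applies; checking that the outcome can be returned to the standard form $T_{\gamma, \bm a', \bm b', \bm c'}$ with all $n$ exponents in $\{0, \dots, d-1\}$ is the heart of the matter. (An alternative that would sidestep some of this bookkeeping is to induct on $n$, using a basic--construction description of $\bmw{n}\, e_{n-1}\, \bmw{n}$ in terms of $\bmw{n-1}$ and $\bmw{n-2}$ and feeding in Lemmas~\ref{lemma: homomorphism from A n r to A n d} and~\ref{lemma:   semi--admissibility implies admissibility in fewer variables}.) With the $\bm c$--reduction in hand, every $T_{\gamma, \bm a, \bm b, \bm c}$ with a horizontal strand becomes an $S$--combination of standard elements of the same shape with all exponents in $\{0, \dots, d-1\}$, and the counts above produce the claimed spanning sets.
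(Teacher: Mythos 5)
Your reduction of the exponents $\bm a$ and $\bm b$ matches the paper's argument, and your counting is correct. But the step you flag as ``the heart of the matter'' --- bringing the exponents $\bm c$ on the vertical--strand variables $y''_{2f+1},\dots,y''_n$ down into $\{0,\dots,d-1\}$ --- is exactly the step you have not done, and the route you sketch for it (pushing each vertical--strand variable against the block $e_1e_3\cdots e_{2f-1}\,g_\pi$ and using the tangle and unwrapping relations to ``re--express it through a horizontal--strand variable'') is both uncarried--out and unnecessary; it would moreover risk changing the Brauer shape $\gamma$, which you need to preserve to run the count. This is a genuine gap as written.

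The missing observation is that Lemma~\ref{lemma:  reduction lemma for powers of y', y''} is already a \emph{non--local} statement: it asserts $e_i\,p_0(y''_j)=p_0(y''_j)\,e_i=0$ for \emph{every} $j\ne i+1$, not only for $j=i$ (the ``adjacent'' case you use for $\bm a$ and $\bm b$). In particular $e_{2f-1}\,p_0(y''_{2f+j})=0$ for all $j\ge 1$, since $2f+j\ne 2f$. Combined with the facts that $e_{2f-1}$ commutes with $g_\pi$ (a word in $g_i$, $i\ge 2f+1$) and with every $y''_{2f+j}$ (Lemma~\ref{lemma:  commutation of y', y''}), you can slide $e_{2f-1}$ from the block $e_1e_3\cdots e_{2f-1}$ rightward until it sits next to ${y''_{2f+j}}^{c_{2f+j}}$, divide by $p_0$ to write ${y''_{2f+j}}^{c_{2f+j}}=h(y''_{2f+j})\,p_0(y''_{2f+j})+\text{(remainder of degree}<d)$, kill the first term against $e_{2f-1}$, and slide $e_{2f-1}$ back --- all without disturbing $\gamma$, $\bm a'$, or $\bm b'$. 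This is precisely how the paper disposes of $\bm c$ in one sentence. With that substitution your proposal closes up and coincides with the paper's proof.
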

 
 \begin{proof}  We also write  $T_{\gamma, \bm a, \bm b, \bm c}$  for the image of that element in 
the cyclotomic BMW algebra  $\bmw{n}$.       The set of all $T_{\gamma, \bm a, \bm b, \bm c}$ spans  $\bmw{n}$,  while those with
 $\gamma$ a Brauer diagram with $2f > 0$ horizontal strands span the ideal $\bmw{n} e_{n-1} \bmw{n}$.   
 
If $\gamma$ is a permutation diagram,  then we can write any element $
 T_{\gamma, \bm a, \bm b, \bm c}  = T_{\gamma, \bm c}$  as a linear combination of elements
 $T_{\gamma, \bm c'}$,  with $0 \le c'_i \le r$,  using the relations
 $
p(y''_j) = (y''_j - u_1) \cdots (y''_j - u_r) = 0.
$

In the following,  take $f >0 $ and let  $\gamma$  be a Brauer diagram with $2f$ horizontal strands.   
We claim that any element  $T_{\gamma, \bm a, \bm b, \bm c}$ can be written as a linear combination of
 elements  $T_{\gamma, \bm a', \bm b', \bm c'}$  where  $a'_i$,   $b'_i$,  and $c'_i$  lie in the interval $0, 1,  \dots, d-1$. 
 Using the commutation relations of Lemma \ref{lemma:  commutation of y', y''},   we can write
 $$
 T_{\gamma, \bm a, \bm b, \bm c} =  g_\alpha\, ({y''_1}^{a_1} e_1)( {y''_3}^{\,a_3} e_3) \cdots 
 ( {y''_{2f-1}}^{a_{2f-1}} e_{2f-1})  g_\pi  {y''}^{\,\bm c}  
 {y'}^{\,\bm b} (g_\beta)^*
 $$
Now, using Lemma \ref{lemma:  reduction lemma for powers of y', y''},  we can write any such element as a linear combination elements  $T_{\gamma, \bm a', \bm b, \bm c}$   with the $a'_i$ in the desired interval.    Using the commutation relations again, we can also write
$$
 T_{\gamma, \bm a', \bm b, \bm c} =  g_\alpha\, {y''}^{\,\bm a'}\,  g_\pi   {y''}^{\,\bm c}  
 ( e_{2f-1} {y'_{2f-1}}^{b_{2f-1}})\cdots (e_3 {y'_3}^{\,b_3})(  e_1 {y'_1}^{\,b_1})   (g_\beta)^*,
$$
and using Lemma \ref{lemma:  reduction lemma for powers of y', y''},  we can write any such element as a linear combination of elements  $T_{\gamma, \bm a', \bm b', \bm c}$  with the $b'_i$ in the desired interval.   Finally,  $e_{2f-1}$  commutes with
$g_\pi$ and with all $y''_{2f + j}$.   Using  $ e_{2f-1} p_0(y''_{2f + j}) = 0$,  we can reduce any
$
 T_{\gamma, \bm a', \bm b', \bm c}$  to a linear combination  elements $
 T_{\gamma, \bm a', \bm b', \bm c'}$  with the $c'_i$ in the desired interval.
 
 It follows that $\bmw{n}$ is spanned by elements $T_{\gamma, \bm c}$,  where $\gamma$ is a permutation diagram and $0 \le c_i \le r$,  and by elements $T_{\gamma, \bm a, \bm b, \bm c}$ where
 $\gamma$ is a Brauer diagram with at least $2$ horizontal strands and 
  $0 \le a_i, b_i , c_i \le d$.   Moreover, the latter set spans
$\bmw{n} e_{n-1}  \bmw{n}$.   
 \end{proof}  
 
 \subsection{A spanning set for $\degbmw{n, S, r}$}  \label{subsection: spanning set for degenerate algebra}
 \def\gr{{\rm gr}}
 In this section, write $\degbmw{n}$  for
 $\degbmw{n, S, r}(\Omega;  u_1, \dots, u_r)$.

 Consider first the
 free non-commutative polynomial algebra in the generators  $\{s_i,e_i, y_j : \ 1\le i<n \text{ and }1\le j\le n \} $.      Assign degrees to the generators,  $\deg(e_i) = \deg(s_i) = 0$,  $\deg(y_j) = 1$.    This makes the
 non-commutative polynomial algebra into a graded algebra.    As the homomorphic image of a graded algebra is a filtered algebra,   the degenerate cyclotomic BMW algebra $\degbmw{n}$ is filtered by degree, as is the ideal   $\degbmw{n} e_{n-1} \degbmw{n} $.    Let  $\mathcal G = \gr(\degbmw{n})$ denote the associated graded algebra.   We will write $e_i, s_i,   y_j$  also for the images of these elements in $\mathcal G$.   
 
 Note that  $(\degbmw{n})_0$,  the degree zero part of $\degbmw{n}$, is the unital subalgebra generated by $\{s_i,e_i : \ 1\le i<n \}$.  The canonical map from $\degbmw{n}$ to $\mathcal G$ restricts to an algebra isomorphism from $(\degbmw{n})_0$ to $\mathcal G_0$.

 To produce a spanning set in the ideal  $\degbmw{n, S, r} e_{n-1} \degbmw{n, S, r} $, it suffices to produce a spanning set in   $\mathcal G e_{n-1} \mathcal G$.

 \begin{lemma} \label{lemma:  reduction lemma for powers of y in G}
In $\mathcal G$, we have
  $p_0(y_j)\, e_i = e_i  \,p_0(y_j) = 0$  for all $j  \ne  i+ 1$.  
\end{lemma}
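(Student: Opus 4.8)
The plan is to transcribe, almost verbatim, the proof of Lemma~\ref{lemma:  reduction lemma for powers of y', y''} --- the analogous statement for the non-degenerate algebras --- with the composite conjugates $y'_j$ and $y''_j$ there replaced by the generators $y_j$ themselves, now working inside $\mathcal G$. What makes this possible is that the obstruction to an \emph{exact} conjugation, namely the term $e_i - 1$ in the skein relation, drops out in the associated graded algebra.

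First I would record how the relevant defining relations of $\degbmw{n}$ look in $\mathcal G$. The idempotent relations, the tangle relations, and the commutation relations $e_i y_j = y_j e_i$ and $s_i y_j = y_j s_i$ for $j \notin \{i,i+1\}$ are all homogeneous for the grading $\deg e_i = \deg s_i = 0$, $\deg y_j = 1$, so they hold unchanged in $\mathcal G$; in particular the identity $s_i s_{i+1} e_i s_{i+1} s_i = e_{i+1}$, which is already a consequence of the tangle relations in $\degbmw{n}$, holds in $\mathcal G$. By contrast the skein relation $s_i y_i - y_{i+1} s_i = e_i - 1$ has its left-hand terms in filtration degree $1$ and its right-hand side in filtration degree $0$, so in $\mathcal G$ it becomes simply $s_i y_i = y_{i+1} s_i$, and likewise $y_i s_i = s_i y_{i+1}$; together with $s_i^2 = 1$ this yields the exact conjugation rules $s_i y_i s_i = y_{i+1}$ and $s_i y_{i+1} s_i = y_i$ in $\mathcal G$. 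Thus in $\mathcal G$ the pair $(s_i, e_i)$ plays precisely the role that $(g_i, e_i)$ plays in $\abmw{n}$, and the $y_j$ play the role of the $y'_j$ (and of the $y''_j$): conjugation by $s_{j-1} \cdots s_{i+1} s_i$ carries $y_i$ to $y_j$ with no correction term.

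With these relations available, and with the base relation $p_0(y_1) e_1 = 0$ --- which holds in $\degbmw{n}$ under the standing $d$--semi--admissibility hypothesis (it holds in $\degbmw{2}$ by assumption, and $\degbmw{2}$ maps to $\degbmw{n}$ compatibly on generators) and hence passes to $\mathcal G$ --- the induction proceeds exactly as in Lemma~\ref{lemma:  reduction lemma for powers of y', y''}. One shows successively: (i) $p_0(y_j) e_j = 0$ for every $j$, by conjugating the relation for $j$ by $s_j s_{j+1}$ and using $s_j s_{j+1} y_j s_{j+1} s_j = y_{j+1}$ and $s_j s_{j+1} e_j s_{j+1} s_j = e_{j+1}$; (ii) $p_0(y_j) e_i = 0$ for $j \le i$, by induction on $i - j$, from $p_0(y_j) e_{i+1} = p_0(y_j)\, e_{i+1} e_i e_{i+1} = e_{i+1}\, p_0(y_j) e_i\, e_{i+1} = 0$, where $e_{i+1}$ is moved past $p_0(y_j)$ by the commutation relation and $p_0(y_j) e_i = 0$ by the inductive hypothesis; (iii) $p_0(y_j) e_i = 0$ for $i \le j-2$, by writing $y_j = (s_{j-1}\cdots s_i)\, y_i\, (s_i \cdots s_{j-1})$, moving the $s_k$ with $k \ge i+2$ past $e_i$, rewriting $s_i s_{i+1} e_i = e_{i+1} e_i$, and invoking $p_0(y_i) e_{i+1} = 0$ from step (ii). Finally $e_i\, p_0(y_j) = 0$ follows by applying the involution $*$, which fixes $e_i$ and $y_j$ and is filtration-preserving, hence descends to $\mathcal G$.

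The one genuinely new ingredient --- and therefore the point that needs care --- is the passage to $\mathcal G$ in the second paragraph: one must check that $s_i y_i$ really does have filtration degree exactly $1$ (so that its symbol is the expected product in $\mathcal G$, not $0$), so that the degenerate skein relation yields the clean conjugation $s_i y_i s_i = y_{i+1}$, and that the identity $s_i s_{i+1} e_i s_{i+1} s_i = e_{i+1}$ indeed holds. Once the clean conjugation is in place, everything else is the routine transcription described above, since none of the steps (i)--(iii) interacts with the filtration any further.
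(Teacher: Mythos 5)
Your proposal is correct and is essentially the paper's own proof: the paper's entire argument is the observation that the skein relation degenerates in $\mathcal G$ to the exact conjugation $s_iy_is_i=y_{i+1}$, after which the proof of Lemma \ref{lemma:  reduction lemma for powers of y', y''} carries over verbatim with $y'_j$ replaced by $y_j$ and $g_i$ by $s_i$, which is precisely the transcription you carry out. (Your worry about $s_iy_i$ having filtration degree exactly $1$ is moot --- the identity $s_iy_i=y_{i+1}s_i$ in $\mathcal G$ holds simply because the two elements differ by $e_i-1\in F_0$, whether or not either symbol vanishes.)
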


\begin{proof}  In $\mathcal G$,  the elements $y_i$  become conjugate,  $s_i y_i s_i = y_{i+1}$.  
It follows that the proof of Lemma \ref{lemma:  reduction lemma for powers of y', y''} carries over  unchanged (replacing $y'_j$ with $y_j$  and $g_i$  with $s_i$ everywhere).   
\end{proof}
 
 We have already discussed the
 surjective homomorphism from the Brauer algebra $\mathcal B_n(\omega_0)$  with parameter 
 $\omega_0$  to $(\degbmw{n})_0$  taking $s_i \mapsto s_i$  and $e_i \mapsto e_i$;
 see the discussion just before Proposition \ref{proposition: spanning by r regular monomials}.     For a Brauer diagram $\gamma$,  we will also write $\gamma$ for the image of $\gamma$ in $(\degbmw{n})_0$  and in $\mathcal G_0$.   According to Proposition \ref{proposition: spanning by r regular monomials},   $\degbmw{n}$  is spanned by the set of $r$--regular monomials
\begin{equation} \label{spanning elements of N_n 2}
 y^{\bm p}  \gamma  y^{\bm q}.
 \end{equation}
   Furthermore, the ideal $\degbmw{n} e_{n-1} \degbmw{n}$ is spanned by those elements $y^{\bm p}  \gamma  y^{\bm q}$ such that $\gamma$ has $2f >0 $ horizontal strands.  
 
 If $\gamma$ is a permutation diagram, then $p_i = 0$ for all $i$ and
 $$
  y^{\bm p}  \gamma  y^{\bm q} =  \gamma  y^{\bm q}  :=  T_{\gamma, \bm q}.
 $$
 
 If $\gamma$ is not a permutation diagram, then using the factorization of $\gamma$ in Equation (\ref{equation:  factorization of Brauer diagram}), and using
 $s_i y_i = y_{i+1} s_i$  in $\mathcal G$, the image of the element (\ref{spanning elements of N_n}) in $\mathcal G$ can be written as
 \begin{equation}
 \begin{aligned}
 y^{\bm p}  \gamma  y^{\bm q}  &= y^{\bm p}   \alpha (e_1 e_3 \cdots e_{2f-1})  \pi   \beta\inv  y^{\bm q}  \\
 &=    \alpha  y^{\alpha\inv(\bm  p)} (e_1 e_3 \cdots e_{2f-1})  \pi  y^{ \beta\inv(\bm q)}  \beta\inv ,
 \end{aligned}
 \end{equation}
 where $  y^{\alpha\inv(\bm  p)} =   {y_1}^{p_{\alpha(1)}} \cdots {y_n}^{p_{\alpha(n)}}$ and
  $  y^{\beta\inv(\bm  p)} =   {y_1}^{p_{\beta(1)}} \cdots {y_n}^{p_{\beta(n)}}$.   Taking into account the restrictions on
  $\bm p$ and $\bm q$, this can be written in the form
  \begin{equation}
   T_{\gamma, \bm a, \bm b, \bm c}   =     \alpha  y^{\bm a} (e_1 e_3 \cdots e_{2f-1})  \pi  y^{\bm c}  y^{\bm b}  \beta\inv 
  \end{equation}
  where $$ y^{\bm a} = {y_1}^{a_1} {y_3}^{a_3}\cdots  {y_{2f-1}}^{a_{2f-1}},$$
  $$y^{\bm c} =  \prod_{2f+1 \le j \le n} y_j^{c_j},$$  and 
  $$ y^{\bm b} = {y_1}^{b_1} {y_3}^{b_3}\cdots  {y_{2f-1}}^{b_{2f-1}}.$$

  \begin{lemma} \label{lemma:  spanning set for ideal in degenerate case}
  The ideal \ $\degbmw{n} e_{n-1} \degbmw{n}$    is  spanned by a set of  $d^n b'(n)$ elements.   The algebra  $\degbmw{n}$  is spanned by a set of 
  $d^n b'(n) + r^n n!$ elements. 
\end{lemma}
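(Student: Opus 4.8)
The plan is to mirror, in the degenerate setting, the argument already carried out for the cyclotomic BMW algebras in Lemma \ref{lemma:  spanning set for the ideal in cyclotomic algebra}, using the filtered/graded reduction as the bridge. Since the associated graded algebra $\mathcal G = \gr(\degbmw{n})$ surjects onto $\degbmw{n}$ in each filtration degree, and since $\mathcal G$ contains the images of the $r$--regular monomials as a spanning set (by Proposition \ref{proposition: spanning by r regular monomials}), it suffices to exhibit a spanning set of the required size inside $\mathcal G e_{n-1} \mathcal G$ and $\mathcal G$. Working in $\mathcal G$ is advantageous because there the elements $y_i$ are genuinely conjugate via $s_i y_i s_i = y_{i+1}$, which is exactly what makes Lemma \ref{lemma:  reduction lemma for powers of y in G} available.

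The key steps, in order, are as follows. First, separate the spanning $r$--regular monomials $y^{\bm p} \gamma y^{\bm q}$ into those where $\gamma$ is a permutation diagram and those where $\gamma$ has $2f > 0$ horizontal strands. For the permutation case, use the cyclotomic relation $p(y_j) = (y_j - u_1)\cdots(y_j - u_r) = 0$ (valid in each $\degbmw{n}$, since all $y_j$ are conjugate in $\mathcal G$ and the cyclotomic relation holds for $y_1$) to bound the exponents in $\bm q$ to $0, 1, \dots, r-1$; this contributes at most $r^n n!$ elements. For the case $2f > 0$, rewrite $T_{\gamma, \bm a, \bm b, \bm c} = \alpha\, y^{\bm a} (e_1 e_3 \cdots e_{2f-1}) \pi\, y^{\bm c}\, y^{\bm b}\, \beta\inv$ as already displayed, then successively apply Lemma \ref{lemma:  reduction lemma for powers of y in G}: the factors $y_1^{a_1}, y_3^{a_3}, \dots$ sit against $e_1, e_3, \dots$, so $p_0(y_j) e_j = 0$ lets us reduce each $a_i$ into the interval $0, \dots, d-1$; symmetrically for the $b_i$ by placing the $y'$--type factors against the same $e_j$; and finally $e_{2f-1}$ commutes with $\pi$ and with each $y_{2f+j}$, so $e_{2f-1} p_0(y_{2f+j}) = 0$ reduces the $c_j$ to $0, \dots, d-1$ as well. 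Counting: there are $b'(n) = (2n-1)!! - n!$ Brauer diagrams with at least one horizontal strand, and for each there are at most $d^n$ choices of the reduced exponent tuple $(\bm a, \bm b, \bm c)$ (at most $n$ positive exponents, each in a range of size $d$), giving the bound $d^n b'(n)$ for $\degbmw{n} e_{n-1} \degbmw{n}$ and hence $d^n b'(n) + r^n n!$ for $\degbmw{n}$.

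The one point requiring a little care — and the likely main obstacle — is the bookkeeping in passing to the associated graded algebra $\mathcal G$: one must check that a spanning set of $\mathcal G e_{n-1} \mathcal G$ of size $N$ lifts to a spanning set of the ideal $\degbmw{n} e_{n-1} \degbmw{n}$ of the same size $N$, and likewise for $\degbmw{n}$ itself. This is the standard fact that if a filtered module is spanned in its associated graded by $N$ homogeneous elements, then $N$ arbitrary lifts span the filtered module; it applies because $\degbmw{n}$ is finite-dimensional over a field, or more generally because the filtration is exhausted at finite stages and Proposition \ref{proposition: spanning by r regular monomials} already provides a global spanning set. The commutation manipulations (moving $\alpha, \beta\inv$ past the $y$'s via the substitutions $y^{\alpha\inv(\bm p)}$ etc., using $s_i y_i = y_{i+1} s_i$ in $\mathcal G$, and the commutation of $e_{2f-1}$ with $\pi$ and the $y_{2f+j}$) are all routine in $\mathcal G$ and need only be invoked, not re-derived. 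Everything else is a direct transcription of the proof of Lemma \ref{lemma:  spanning set for the ideal in cyclotomic algebra} with $g_i, y'_j, y''_j$ replaced by $s_i, y_j$ and with Lemma \ref{lemma:  reduction lemma for powers of y', y''} replaced by Lemma \ref{lemma:  reduction lemma for powers of y in G}.
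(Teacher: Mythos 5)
Your proposal is correct and follows essentially the same route as the paper: pass to the associated graded algebra $\mathcal G$, invoke the spanning set of $r$--regular monomials, and use Lemma \ref{lemma:  reduction lemma for powers of y in G} in place of Lemma \ref{lemma:  reduction lemma for powers of y', y''} to rerun the argument of Lemma \ref{lemma:  spanning set for the ideal in cyclotomic algebra} and cut the exponents down to the range $0,\dots,d-1$ on diagrams with horizontal strands. The only addition is your explicit remark about lifting a spanning set from $\gr(\degbmw{n})$ back to $\degbmw{n}$, which the paper leaves implicit and which you justify correctly.
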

 
 \begin{proof}  It is enough to work instead in the associated graded algebra $\mathcal G$.    We have that 
  $\mathcal G$ is spanned by the elements $T_{\gamma, \bm c}$,  where $\gamma$ is a permutation diagram and
  $0 \le c_i \le r-1$ for all $i$,   and by the elements
  $T_{\gamma, \bm a, \bm b, \bm c}$    where $\gamma$ is a Brauer diagram with at least 2 horizontal strands.
  The argument of Lemma \ref{lemma:  spanning set for the ideal in cyclotomic algebra}, with Lemma \ref{lemma:  reduction lemma for powers of y', y''} replaced by Lemma \ref{lemma:  reduction lemma for powers of y in G},  shows that any $T_{\gamma, \bm a, \bm b, \bm c}$,  where $\gamma$ has horizontal strands,  can be written as a linear combination of elements 
  $T_{\gamma, \bm a', \bm b', \bm c'}$, with $0 \le a_i, b_i, c_i \le d-1$.  Moreover, the latter set of elements spans
  $\mathcal G e_{n-1} \mathcal G$. 
  \end{proof}
 
 \subsection{Freeness of  $\both{n, S, r}$}  Let us recall from Lemma \ref{lemma: homomorphism from A n r to A n d}  that there is a surjective algebra homomorphism 
$\theta : 
  \both{n, S, r}(u_1, \dots, u_r) \to \both{n, S, d}(u_1, \dots, u_d)$ and that 
 $\theta$ maps the ideal generated by $e_{n-1}$ in   $\both{n, S, r}(u_1, \dots, u_r)$ onto the ideal generated by $e_{n-1}$ in  $\both{n, S, d}(u_1, \dots, u_d)$.

 \begin{proposition}  \label{proposition:  isomorphism for ideal in d admissible case}
  $\theta$ induces an isomorphism from the ideal generated by $e_{n-1}$ in    $\both{n, S, r}(u_1, \dots, u_r)$ onto the ideal generated by $e_{n-1}$ in  $\both{n, S, d}(u_1, \dots, u_d)$. 
  \end{proposition}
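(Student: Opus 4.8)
The plan is to compare the explicit $S$--spanning set, produced in Lemmas~\ref{lemma:  spanning set for the ideal in cyclotomic algebra} and~\ref{lemma:  spanning set for ideal in degenerate case}, of the ideal $J_r$ generated by $e_{n-1}$ in $\both{n, S, r}(u_1, \dots, u_r)$ with a known $S$--basis of the ideal $J_d$ generated by $e_{n-1}$ in $\both{n, S, d}(u_1, \dots, u_d)$, and then to observe that $\theta$ carries the one onto the other. Those lemmas show that $J_r$ is spanned over $S$ by the family $\mathcal T$ consisting of all elements $T_{\gamma, \bm a, \bm b, \bm c}$ (resp.\ their degenerate analogues), where $\gamma$ is a Brauer diagram with at least two horizontal strands and all the exponents $a_i, b_i, c_i$ lie in $\{0, 1, \dots, d - 1\}$; the index family $\mathcal T$ has $d^n b'(n)$ members. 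On the other hand, Lemma~\ref{lemma:   semi--admissibility implies admissibility in fewer variables}\refc{lemma:   semi--admissibility implies admissibility in fewer variables item 2} tells us that the parameters of $\both{n, S, d}(u_1, \dots, u_d)$ are admissible; so, by the freeness results for the admissible case --- Corollary~\ref{corollary:  freeness of deg BMW with admissible parameters} in the degenerate case, and the corresponding results of Wilcox and Yu~\cite{Wilcox-Yu2, Yu-thesis} and of Goodman and Graber~\cite{goodman-graber1, goodman-graber2} in the non--degenerate case --- the algebra $\both{n, S, d}(u_1, \dots, u_d)$ is $S$--free with basis the family of all $T_{\gamma, \bm a, \bm b, \bm c}$ having exponents in $\{0, \dots, d - 1\}$, and the sub--family of those for which $\gamma$ has at least two horizontal strands is an $S$--basis of the ideal $J_d$; this sub--family also has $d^n b'(n)$ members.

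I would then observe that $\theta$ is the identity on the generators --- $g_i$, $e_i$, $y_1$ in the non--degenerate case, $s_i$, $e_i$, $y_j$ in the degenerate case --- hence also fixes the derived elements $y'_j$, $y''_j$, $g_\alpha$, $g_\pi$, $g_\beta$ (resp.\ their degenerate counterparts), so that $\theta$ sends the spanning element of $J_r$ labelled by $(\gamma, \bm a, \bm b, \bm c)$ to the basis element of $J_d$ labelled by the same data. By Lemma~\ref{lemma: homomorphism from A n r to A n d}, $\theta$ maps $J_r$ onto $J_d$. Put $N = d^n b'(n)$ and let $\psi \colon S^N \to J_r$ be the surjection sending the standard basis of $S^N$ onto $\mathcal T$. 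Then $\theta \circ \psi \colon S^N \to J_d$ carries the standard basis of $S^N$ bijectively onto the $S$--basis of $J_d$ described above, so $\theta \circ \psi$ is an isomorphism; hence $\psi$ is injective, so an isomorphism, and $\theta|_{J_r} = (\theta \circ \psi) \circ \psi\inv$ is an isomorphism of $J_r$ onto $J_d$, which is exactly the assertion. (The general fact in play is that a surjection $f \colon M \to L$ onto a free $S$--module $L$ of rank $N$, from a module $M$ generated by $N$ elements, is an isomorphism as soon as there is a generating map $g \colon S^N \to M$ with $f \circ g$ an isomorphism, since then $g$ is injective, hence bijective, and $f = (f \circ g) \circ g\inv$.)

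The step that needs care is the matching used above: one must check that the $d^n b'(n)$ spanning elements of $J_r$ supplied by Lemmas~\ref{lemma:  spanning set for the ideal in cyclotomic algebra} and~\ref{lemma:  spanning set for ideal in degenerate case} are indexed by precisely the combinatorial data $(\gamma, \bm a, \bm b, \bm c)$ indexing the ideal--basis of $\both{n, S, d}(u_1, \dots, u_d)$ in the admissible case, and that $\theta$ respects this indexing --- which it does, since $\theta$ fixes the generators and each $T_{\gamma, \bm a, \bm b, \bm c}$ is the same word in the generators in the two algebras. In the degenerate case it is cleanest to run this argument first in the associated graded algebra $\mathcal G = {\rm gr}(\degbmw n)$, as in the proof of Lemma~\ref{lemma:  spanning set for ideal in degenerate case}: a lift to $\degbmw n$ of an $S$--spanning set of $\mathcal G\, e_{n-1}\, \mathcal G$ still spans $\degbmw n\, e_{n-1}\, \degbmw n$ and still consists of $d^n b'(n)$ elements, so the conclusion is unaffected.
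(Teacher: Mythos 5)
Your proposal is correct and follows essentially the same route as the paper: admissibility of the truncated parameters makes $\langle e_{n-1}\rangle_d$ free of rank $d^n b'(n)$, the spanning lemmas give a spanning set of $\langle e_{n-1}\rangle_r$ of the same cardinality, and surjectivity of $\theta$ then forces an isomorphism. The only difference is in the final step: the paper sidesteps your explicit matching of spanning elements with a specific basis of $\langle e_{n-1}\rangle_d$ (and hence the degenerate-case wrinkle you flag) by invoking the general fact that, over an integral domain, the image under $\theta$ of an $N$-element spanning set is an $N$-element spanning set of a free module of rank $N$ and is therefore automatically a basis, whatever it is.
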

  
  \begin{proof}Write $\langle e_{n-1}\rangle_r$  for the ideal generated by $e_{n-1}$ in   $\both{n, S, r}(u_1, \dots, u_r)$ and
   $\langle e_{n-1}\rangle_d$  for the ideal generated by $e_{n-1}$ in   $\both{n, S, d}(u_1, \dots, u_d)$. 
  
  The parameters of $\both{n, S, d}(u_1, \dots, u_d)$ are admissible, by Lemma \ref{lemma:   semi--admissibility implies admissibility in fewer variables}.  Hence,   we know that  \break $\both{n, S, d}(u_1, \dots, u_d)$  is a free $S$ module of rank
  $d^n (2n-1)!!$,  and    $\langle e_{n-1}\rangle_d$  is free of rank
  $$
  d^n ((2n-1)!! - n!) =  d^n b'(n),
  $$
  where $b'(n)$ denotes the number of Brauer diagrams on $n$ strands with at least one horizontal strand. 
  We know that    $\langle e_{n-1}\rangle_r$ has a spanning set of the same cardinality by Lemmas \ref{lemma:  spanning set for the ideal in cyclotomic algebra} and \ref{lemma:  spanning set for ideal in degenerate case}.
  Therefore, $\theta :  \langle e_{n-1}\rangle_r \to \langle e_{n-1}\rangle_d$ is an isomorphism.  (In fact, if $\mathbb B$ is spanning set of $ \langle e_{n-1}\rangle_r$ of cardinality $d^n b'(n)$, then $\theta(\mathbb B)$ spans $\langle e_{n-1}\rangle_d$.  Since $S$ is an integral domain and $\langle e_{n-1}\rangle_d$ is free over $S$ with a basis of the same cardinality, it follows that $\theta(\mathbb B)$ is a basis of $\langle e_{n-1}\rangle_d$.  Therefore, $\mathbb B$ is a basis of  $ \langle e_{n-1}\rangle_r$, and $\theta$ is an isomorphism.)
  \end{proof} 
 
   \begin{theorem}   For all $n \ge 0$,   $\both{n, S,r}$ is a free $S$--module of rank $d^n b'(n) + r^n n!$, 
   and $\both{n, S,r}$  imbeds in $\both{n+1, S,r}$.   
   \end{theorem}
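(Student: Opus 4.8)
Let $J$ be the two--sided ideal of $\both{n,S,r}$ generated by $e_{n-1}$; since all the $e_i$ generate the same ideal, $J$ is also the ideal generated by $e_1$, and so by \eqref{introduction short exact sequence 1} one has $\both{n,S,r}/J \cong H_n(u_1,\dots,u_r)$, the cyclotomic (resp.\ degenerate cyclotomic) Hecke algebra.  The plan is:  (1) to establish that $J$ is $S$--free of rank $d^n b'(n)$ and that $H_n(u_1,\dots,u_r)$ is $S$--free of rank $r^n n!$; (2) to deduce that $\both{n,S,r}$ is $S$--free of rank $d^n b'(n)+r^n n!$ by comparing the spanning sets of Lemmas \ref{lemma:  spanning set for the ideal in cyclotomic algebra} and \ref{lemma:  spanning set for ideal in degenerate case} against a dimension count over the field of fractions of $S$; and (3) to obtain the embedding by applying the five lemma to the morphism of short exact sequences induced by the natural map $\both{n,S,r}\to\both{n+1,S,r}$.

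\emph{Step (1).}  By Lemma \ref{lemma:   semi--admissibility implies admissibility in fewer variables}, the parameters of $\both{n,S,d}(u_1,\dots,u_d)$ are admissible, so this algebra is $S$--free of rank $d^n(2n-1)!!$ and the ideal it generates by $e_{n-1}$ is $S$--free of rank $d^n((2n-1)!!-n!)=d^n b'(n)$; by Proposition \ref{proposition:  isomorphism for ideal in d admissible case}, the homomorphism $\theta$ carries $J$ isomorphically onto that ideal, so $J$ is $S$--free of rank $d^n b'(n)$.  The quotient $H_n(u_1,\dots,u_r)$ is $S$--free of rank $r^n n!$ by the Ariki--Koike theorem \cite{ariki-koike}, respectively its degenerate analogue (see \cite{kleshchev-book}).

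\emph{Step (2).}  Let $F$ be the field of fractions of $S$.  Since $F$ is flat over $S$, tensoring the exact sequence $0\to J\to\both{n,S,r}\to H_n(u_1,\dots,u_r)\to 0$ with $F$ keeps it exact, whence $\dim_F(\both{n,S,r}\otimes_S F)=d^n b'(n)+r^n n!$.  By Lemmas \ref{lemma:  spanning set for the ideal in cyclotomic algebra} and \ref{lemma:  spanning set for ideal in degenerate case}, $\both{n,S,r}$ has an $S$--spanning set $\mathbb B$ of exactly $d^n b'(n)+r^n n!$ elements; the image of $\mathbb B$ spans the $F$--vector space $\both{n,S,r}\otimes_S F$ and has cardinality equal to its dimension, hence is an $F$--basis.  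It follows that $\mathbb B$ is $S$--linearly independent, so $\mathbb B$ is an $S$--basis and $\both{n,S,r}$ is $S$--free of rank $d^n b'(n)+r^n n!$.

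\emph{Step (3), and the main obstacle.}  The natural map $\iota_n\colon\both{n,S,r}\to\both{n+1,S,r}$ sending each generator to the like--named generator exists because the relations of $\both{n,S,r}$ occur among those of $\both{n+1,S,r}$; it carries $J$ into the corresponding ideal $J'$ of $\both{n+1,S,r}$, and therefore induces on the quotients the standard inclusion $H_n(u_1,\dots,u_r)\hookrightarrow H_{n+1}(u_1,\dots,u_r)$ of cyclotomic (resp.\ degenerate cyclotomic) Hecke algebras, which is injective.  Thus $\iota_n$ is a morphism between the two short exact sequences, and by the short five lemma it is enough to show $J\to J'$ is injective; this is the one point requiring input beyond what has already been assembled.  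Under the isomorphisms of Proposition \ref{proposition:  isomorphism for ideal in d admissible case} — which intertwine the maps $\iota_n$ because the relevant homomorphisms $\theta$ send generators to generators — the map $J\to J'$ becomes the restriction to ideals of the embedding $\both{n,S,d}\hookrightarrow\both{n+1,S,d}$ (known in the admissible case), hence is injective.  (Alternatively, one can check directly that $\iota_n$ carries the spanning set of Lemma \ref{lemma:  spanning set for the ideal in cyclotomic algebra} or \ref{lemma:  spanning set for ideal in degenerate case} injectively into the corresponding set for $\both{n+1,S,r}$: a Brauer diagram on $n$ strands acquires a through strand, the construction of $T_{\gamma,\bm a,\bm b,\bm c}$ commutes with this via the factorization \eqref{equation:  factorization of Brauer diagram}, and the appended exponent $0$ stays in range, so a basis is carried to a linearly independent set.)
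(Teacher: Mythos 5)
Your proof is correct and follows essentially the same route as the paper: the ideal $\langle e_{n-1}\rangle_r$ is free of rank $d^n b'(n)$ by the isomorphism with the ideal in the admissible algebra $\both{n,S,d}(u_1,\dots,u_d)$, the Hecke quotient is free of rank $r^n n!$, and the spanning sets of the stated cardinality are therefore bases. The only divergence is in the embedding step, where the paper simply checks that the generators-to-generators map carries the basis of $\both{n,S,r}$ injectively into the basis of $\both{n+1,S,r}$ (exactly your parenthetical alternative), while your primary argument instead runs the short five lemma through the two exact sequences, reducing to the known embeddings in the admissible and cyclotomic Hecke cases --- both arguments are valid.
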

   
  \begin{proof}    The ideal  $\langle e_{n-1}\rangle_r$ is free of rank $d^n b'(n)$, by Proposition \ref{proposition:  isomorphism for ideal in d admissible case}, and the quotient
  $\both{n, S, r}/\langle e_{n-1}\rangle_r$ is isomorphic to the cyclotomic Hecke algebra or degenerate cyclotomic Hecke algebra, which is free of rank $r^n n$!.  Therefore,   $\both{n, S, r}$ is free of rank $d^n b'(n) + r^n n!$.
  
  We have given  spanning sets of the same cardinality in Lemmas \ref{lemma:  spanning set for the ideal in cyclotomic algebra} and \ref{lemma:  spanning set for ideal in degenerate case}, and hence those spanning sets are actually $S$--bases.  It is straightforward to check that the homomorphism from $\both{n, S ,r}$ to $\both{n+1, S ,r}$ taking
  generators to generators maps the given basis of  $\both{n, S ,r}$ injectively into the basis of $\both{n+1, S ,r}$.  Therefore the map is injective. 
  \end{proof} 
   
 \section{Cellularity}  \label{section: cellularity}

 The following is a slight weakening of the original definition of cellularity from Graham and Lehrer~\cite{Graham-Lehrer-cellular}.     
 
 \begin{definition}[\cite{Graham-Lehrer-cellular}]  \label{gl cell}  Let $R$ be an integral domain and $A$ a unital $R$--algebra.  A {\em cell datum} for $A$ consists of  an algebra involution $*$ of $A$; a partially ordered set $(\Lambda, \ge)$ and 
for each $\la \in \Lambda$  a set $\mathcal T(\lambda)$;  and   a subset $
\mathcal C = \{ c_{s, t}^\la :  \la \in \Lambda \text{ and }  s, t \in \mathcal T(\la)\} \subseteq A$; 
with the following properties:
\begin{enumerate}
\item  \label{gl cell item 1} 
$\mathcal C$ is an $R$--basis of $A$.
\item   \label{gl cell mult rule} For each $\la \in \Lambda$,  let $\breve A^\la$  be the span of the  $c_{s, t}^\mu$  with
$\mu > \la$.   Given $\la \in \Lambda$,  $s \in \mathcal T(\la)$, and $a \in A$,   there exist coefficients 
$r_v^s( a) \in R$ such that for all $t \in \mathcal T(\la)$:
$$
a c_{s, t}^\la  \equiv \sum_v r_v^s(a)  c_{v, t}^\la  \mod  \breve A^\la.
$$
\item  \label{gl cell conjugates} 
$(c_{s, t}^\la)^* \equiv c_{t, s}^\la   \mod  \breve A^\la$ for all $\la\in \Lambda$ and, $s, t \in \mathcal T(\lambda)$.

\end{enumerate}
$A$ is said to be a {\em cellular algebra} if it has a  cell datum.  
\end{definition}

For brevity,  we will write that  $(\mathcal C, \La)$ is a cellular basis of $A$.  
In the original definition in 
~\cite{Graham-Lehrer-cellular} it is required that  $(c_{s, t}^\la)^* = c_{t, s}^\la$.    All the conclusions of
~\cite{Graham-Lehrer-cellular} remain valid with the weaker definition, and, in fact, the two definitions are equivalent if $2$ is invertible in $R$.   The main advantage of the weaker definition is that it allows a graceful treatment of extensions.

\begin{definition} Let $A$ be an algebra with involution and let $J$ be a $*$--invariant ideal.
Say that  $J$ is a {\em cellular ideal}  if it satisfies the axioms for a  cellular algebra (except for being unital) with cellular basis 
$$ \{ c_{s, t}^\la :  \la \in \Lambda_J \text{ and }  s, t \in \mathcal T(\la)\} \subseteq J$$
and we have, as in point \refc{gl cell mult rule} of the definition of cellularity, 
$$
a c_{s, t}^\la  \equiv \sum_v r_v^s(a)  c_{v, t}^\la  \mod  \breve J^\la
$$
not only for $a \in J$ but also for $a \in A$.
\end{definition}

\begin{lemma}  \label{remark on extensions of cellular algebras} (On extensions of cellular algebras.) 
If $J$ is a cellular ideal in $A$, and 
$H = A/J$ is  cellular  (with respect to the involution induced from the involution on $A$), then $A$ is cellular.  
\end{lemma}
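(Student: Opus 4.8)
The statement is the standard "cellularity lifts through cellular extensions" lemma, so I would follow the well-known argument (essentially due to Graham--Lehrer, adapted to the weakened definition stated above). First I would fix a cell datum $(\mathcal C_J, \Lambda_J, \{\mathcal T_J(\lambda)\}, *)$ for the cellular ideal $J$ and a cell datum $(\mathcal C_H, \Lambda_H, \{\mathcal T_H(\mu)\}, \bar *)$ for the quotient $H = A/J$, where $\bar *$ is the involution induced on $H$ by $*$. Since $J$ is $*$-invariant, $*$ descends to $H$, so this makes sense. For each basis element $\bar c_{s,t}^{\,\mu} \in \mathcal C_H$ I would choose a lift $c_{s,t}^{\,\mu} \in A$; to make the involution axiom work cleanly I would like these lifts to satisfy $(c_{s,t}^{\,\mu})^* \equiv c_{t,s}^{\,\mu}$ modulo lower terms plus $J$, which can be arranged by choosing the lifts compatibly in pairs (or simply noting that the weakened axiom \refc{gl cell conjugates} only requires congruence modulo $\breve A^\lambda$, and $J$ will sit at the bottom of the order).

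**Construction of the cell datum for $A$.** I would set $\Lambda = \Lambda_J \sqcup \Lambda_H$ with the partial order that restricts to the given orders on each piece and declares every element of $\Lambda_J$ to be strictly less than every element of $\Lambda_H$. (This is the crucial ordering choice: the ideal $J$ must be "at the bottom.") For $\mu \in \Lambda_H$ put $\mathcal T(\mu) = \mathcal T_H(\mu)$ and use the chosen lifts $c_{s,t}^{\,\mu}$; for $\lambda \in \Lambda_J$ put $\mathcal T(\lambda) = \mathcal T_J(\lambda)$ and use $c_{s,t}^{\,\lambda} \in \mathcal C_J$. Let $\mathcal C$ be the union. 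Then $\mathcal C$ is an $R$-basis of $A$: the images of the lifts form a basis of $H$ and $\mathcal C_J$ is a basis of $J$, so a short exact sequence / counting argument over the integral domain $R$ gives that $\mathcal C$ is a basis of $A$. This verifies axiom \refc{gl cell item 1}.

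**Verifying the multiplication and involution axioms.** For axiom \refc{gl cell mult rule} I would take $a \in A$ and split into two cases. If $\lambda \in \Lambda_J$, then $c_{s,t}^{\,\lambda} \in J$, so $a c_{s,t}^{\,\lambda} \in J$; the ideal-cellularity hypothesis says precisely that $a c_{s,t}^{\,\lambda} \equiv \sum_v r_v^s(a) c_{v,t}^{\,\lambda} \bmod \breve J^\lambda$ holds for all $a \in A$ (not just $a \in J$), and $\breve J^\lambda = \breve A^\lambda$ for $\lambda \in \Lambda_J$ by our ordering, so this is exactly what is needed. If $\mu \in \Lambda_H$, I would project to $H$: the cell axiom for $H$ gives $\bar a\, \bar c_{s,t}^{\,\mu} \equiv \sum_v \bar r_v^s(\bar a) \bar c_{v,t}^{\,\mu} \bmod \breve H^\mu$; lifting, this says $a c_{s,t}^{\,\mu} - \sum_v r_v^s \, c_{v,t}^{\,\mu}$ lies in the preimage of $\breve H^\mu$, which is spanned by $J$ together with the lifts $c_{v,w}^{\,\nu}$ for $\nu > \mu$ — and that preimage is contained in $\breve A^\mu$ because everything in $\Lambda_J$ is below $\mu$. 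This gives \refc{gl cell mult rule}. Axiom \refc{gl cell conjugates} is checked the same way in two cases: on $\Lambda_J$ it is inherited from the cell datum of $J$; on $\Lambda_H$ it is inherited from $H$ after lifting, again using that the discrepancy lands in $\breve A^\mu$.

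**Main obstacle.** The only real subtlety — and the reason the weakened definition of cellularity is introduced in the excerpt — is handling the involution on the lifted basis elements: one cannot in general lift so that $(c_{s,t}^{\,\mu})^* = c_{t,s}^{\,\mu}$ on the nose, but the weakened axiom \refc{gl cell conjugates} only demands congruence modulo $\breve A^\mu$, and since the "error" of the lift lies in $J \subseteq \breve A^\mu$, this is automatic. Everything else is bookkeeping about the ordered set $\Lambda_J \sqcup \Lambda_H$ and the fact that $R$ is an integral domain so that bases of $J$ and of $A/J$ combine to a basis of $A$. I expect no further difficulty.
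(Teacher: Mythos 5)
Your overall strategy --- take $\Lambda = \Lambda_J \sqcup \Lambda_H$, keep the cellular basis of $J$, lift the cellular basis of $H$ arbitrarily, and use the defining property of a cellular ideal to absorb the action of all of $A$ on the $J$-part --- is exactly the paper's argument. However, your partial order is inverted relative to the convention this paper uses, and with the order as you have written it the axioms fail. Recall that in Definition \ref{gl cell} the ``lower order terms'' module $\breve A^\la$ is the span of the $c_{s,t}^\mu$ with $\mu > \la$. You declare every element of $\Lambda_J$ to be strictly \emph{less} than every element of $\Lambda_H$, and then assert that for $\mu \in \Lambda_H$ the preimage of $\breve H^\mu$ (which contains $J$) is contained in $\breve A^\mu$ ``because everything in $\Lambda_J$ is below $\mu$.'' That is backwards: with your order, $\breve A^\mu$ is spanned only by the lifted elements $c_{v,w}^{\nu}$ with $\nu \in \Lambda_H$, $\nu > \mu$, and contains no part of $J$ at all, so the error term produced by lifting the multiplication rule from $H$ --- which lives in $J$ --- does not land in $\breve A^\mu$, and axiom \refc{gl cell mult rule} (and likewise \refc{gl cell conjugates}) fails for the lifted elements. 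The paper's order is the opposite one: $\la > \mu$ whenever $\la \in \Lambda_J$ and $\mu \in \Lambda_H$, so that $J \subseteq \breve A^\mu$ for every $\mu \in \Lambda_H$, which is precisely what makes the lifting step work. (With that order one also gets $\breve A^\la = \breve J^\la$ for $\la \in \Lambda_J$, so the cellular-ideal hypothesis gives the multiplication axiom on the $J$-labels verbatim.)

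This is a directional slip rather than a conceptual one --- you clearly identified that the whole point is to place $J$ at the correct end of the order and that the error of the lift must be absorbed into $\breve A^\mu$ --- and the proof is repaired simply by reversing the order between $\Lambda_J$ and $\Lambda_H$. (You may have been thinking of the original Graham--Lehrer convention, in which the lower-order-terms ideal is spanned by the $c_{s,t}^\mu$ with $\mu < \la$ and $J$ would indeed sit at the bottom; but the definition in force here uses $\mu > \la$.) Everything else --- the basis statement via the split short exact sequence of free $R$-modules, and the observation that the weakened involution axiom makes the choice of lifts harmless --- is correct and agrees with the paper.
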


\begin{proof}
 Let $(\La_J, \ge)$ be the partially ordered set in the cell datum for $J$ and $\mathcal C_J$ the cellular basis.
Let $(\La_H, \ge)$  be the partially ordered set in the cell datum for $H$ and $\{\bar h_{u, v}^\mu\}$ the cellular basis.  Let  $\La = \La_J \cup \La_H$,  with partial order agreeing with the original partial orders on $\La_J$ and on $\La_H$ and with $\la > \mu$ if $\la \in \La_J$ and $\mu \in \La_H$.
A cellular basis of $A$ is $\mathcal C_J \cup \{h_{s, t}^\mu\}$, where $h_{s, t}^\mu$ is any lift of $\bar h_{s, t}^\mu$.
\end{proof}

 \begin{theorem}  Consider the sequence $\both{n, S, r}$  of cyclotomic or degenerate cyclotomic BMW algebras over an integral domain $S$.    Suppose that Assumption \ref{assumption: torsion free} holds.  
 Then 
 \begin{enumerate}
 \item  $\both{n, S, r}$ imbeds in $\both{n+1, S, r}$  for all $n \ge 0$.
 \item   $\both{n, S, r}$ is a cellular algebra.
 \end{enumerate}
 \end{theorem}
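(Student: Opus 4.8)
The plan is to reduce the assertion to the two cases that are already understood --- $e_1 = 0$ and admissible parameters --- by combining the isomorphism of ideals in Proposition~\ref{proposition:  isomorphism for ideal in d admissible case} with the extension principle for cellular algebras, Lemma~\ref{remark on extensions of cellular algebras}. By Assumption~\ref{assumption: torsion free} and the trichotomy recorded after it, exactly one of the following holds: (a) $e_1 = 0$ in $\both{2, S, r}$; (b) the parameters are admissible; (c) the parameters are $d$--semi--admissible for some $d$ with $0 < d < r$. In case (a), $\both{n, S, r}$ is the cyclotomic (respectively degenerate cyclotomic) Hecke algebra, which is free over $S$, imbeds in the algebra on $n+1$ strands, and is cellular for $*$; in case (b) these are exactly the results for admissible parameters recalled in the introduction. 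So I may assume we are in case (c), with the standing notation of Section~\ref{section: semi admissibility}: $p_0(y_1) e_1 = 0$, $p_0(u) = (u - u_1)\cdots(u - u_d)$, and $\both{n, S, d}(u_1, \dots, u_d)$ admissible by Lemma~\ref{lemma:   semi--admissibility implies admissibility in fewer variables}.

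Assertion (1) is precisely the freeness theorem proved at the end of Section~\ref{section: semi admissibility}, so nothing further is needed for it. For assertion (2) I would proceed as follows. Write $J$ for the ideal generated by $e_{n-1}$ in $\both{n, S, r}(u_1, \dots, u_r)$ and $J_d$ for the ideal generated by $e_{n-1}$ in $\both{n, S, d}(u_1, \dots, u_d)$; both are $*$--invariant, and by Proposition~\ref{proposition:  isomorphism for ideal in d admissible case} the surjection $\theta$ restricts to an algebra isomorphism of $J$ onto $J_d$, which intertwines the two involutions because $\theta$ carries generators to generators. I would then invoke the known cellular structure of the admissible algebra $\both{n, S, d}(u_1, \dots, u_d)$: its cellular basis can be indexed so that the basis elements lying in $J_d$ span $J_d$ and form a subset closed under passing to larger cell indices --- which is exactly the statement that $J_d$ is a cellular ideal in the sense of the definition preceding Lemma~\ref{remark on extensions of cellular algebras}. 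Transporting the cell poset, the index sets $\mathcal T(\lambda)$, and the cell basis through $\theta^{-1}$ equips $J$ with a candidate cellular ideal structure. The basis axiom and the conjugation axiom transfer at once since $\theta|_J$ is a $*$--isomorphism; and the multiplication axiom --- in the strong form required of a cellular ideal, with arbitrary multiplier $a \in \both{n, S, r}$ --- follows by applying $\theta$, using the cellular ideal property of $J_d$ with $\theta(a)$ in the ambient role, and then applying $\theta^{-1}$, noting that $\theta|_J$ carries the span of the higher-indexed cell elements of $J_d$ onto the corresponding span $\breve J^\lambda$ in $J$.

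Finally, $\both{n, S, r}(u_1, \dots, u_r)/J$ is the cyclotomic (respectively degenerate cyclotomic) Hecke algebra $H_n(u_1, \dots, u_r)$, which is cellular for the induced involution, so Lemma~\ref{remark on extensions of cellular algebras} gives cellularity of $\both{n, S, r}$. The step I expect to require the most care is the claim that $J_d$ is genuinely a cellular \emph{ideal} of the admissible algebra --- i.e.\ that the multiplication rule for the cell basis of $J_d$ holds with multipliers from the whole algebra rather than only from $J_d$. This is implicit in the iterated--inflation cellular bases in the cited literature (the ideal generated by $e_{n-1}$ being spanned by the cells indexed by Brauer diagrams with strictly positive defect), but it has to be extracted explicitly; once that is granted, the transport of structure through $\theta^{-1}$ and the appeal to Lemma~\ref{remark on extensions of cellular algebras} are routine.
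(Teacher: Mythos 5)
Your proposal is correct and follows essentially the same route as the paper: the same trichotomy ($e_1=0$ / admissible / semi--admissible), the same use of Proposition~\ref{proposition:  isomorphism for ideal in d admissible case} to transport the cellular ideal structure of $\langle e_{n-1}\rangle_d$ through $\theta^{-1}$ via the identity $a\,\theta^{-1}(c) = \theta^{-1}(\theta(a)c)$, and the same appeal to Lemma~\ref{remark on extensions of cellular algebras} with the cyclotomic Hecke quotient. The one point you flag as needing care --- that $\langle e_{n-1}\rangle_d$ is genuinely a cellular ideal of the admissible algebra --- is simply asserted in the paper with reference to the cited literature, so your explicit attention to it is if anything a slight improvement in rigor.
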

 
 \begin{proof}   In the case $e_1 = 0$ in $\both{2, S, r}$,  the cyclotomic or degenerate cyclotomic BMW algebras reduce to cyclotomic or degenerate cyclotomic Hecke algebras; in this case the results are known. 
 If the parameters are admissible, these results are obtained in the papers  cited in the introduction.   
 
 \def\mf{\mathfrak}
It  remains to verify the results in the semi--admissible case.  
We already have shown in the semi--admissible case that $\both{n, S, r}$ is a free $S$ module, and that 
$\both{n, S, r}$  imbeds in $\both{n+1, S, r}$.
Adopt the notation and conventions of Section \ref{section: semi admissibility}.   We know that  
$\both{n, S, d}(u_1, \dots, u_d)$   has admissible parameters by Lemma \ref{lemma:   semi--admissibility implies admissibility in fewer variables}, and therefore is a cellular algebra
by the papers cited in the introduction.  Moreover, $\langle e_{n-1} \rangle_d$ is a cellular ideal in $\both{n, S, d}(u_1, \dots, u_d)$.    It follows that $\langle e_{n-1} \rangle_r$ is a cellular ideal in $\both{n, S, r}$, with cellular basis
$\{\theta\inv(c_{\mf s, \mf t}^\la)\}$,   where $\{c_{\mf s, \mf t}^\la\}$ is a cellular basis of $\langle e_{n-1} \rangle_d$.  
The crucial point regarding the expansion of $a \theta\inv(c_{\mf s, \mf t}^\la)$ in terms of basis elements, for 
$a \in \both{n, S, r}$  follows because  $a \theta\inv(c_{\mf s, \mf t}^\la) = \theta\inv(\theta(a) c_{\mf s, \mf t}^\la)$.  

Since $\both{n, S, r}/ \langle e_{n-1} \rangle_r$ is isomorphic  to the cyclotomic Hecke algebra, or degenerate cyclotomic Hecke algebra, which is cellular, it follows from Lemma \ref{remark on extensions of cellular algebras} that $\both{n, S, r}$ is cellular. 
 \end{proof}
 
 \begin{corollary}  Any cyclotomic or degenerate cyclotomic BMW algebra over a field is cellular. 
 \end{corollary}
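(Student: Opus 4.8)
The plan is to obtain this as an immediate corollary of the preceding theorem; the only point that requires comment is that Assumption \ref{assumption: torsion free} is automatically in force when the ground ring $S$ is a field.

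First I would observe that a field is in particular an integral domain, so the first clause of Assumption \ref{assumption: torsion free} holds without further ado. For the second clause, note that over a field every module is a vector space, hence free, and therefore torsion free; in particular the left ideal $\both{2, S, r}\, e_1 \subseteq \both{2, S, r}$ is torsion free as an $S$--module. Thus Assumption \ref{assumption: torsion free} is satisfied. The preceding theorem then applies verbatim and yields that $\both{n, S, r}$ is cellular for all $n \ge 0$ (and moreover that $\both{n, S, r}$ imbeds in $\both{n+1, S, r}$), which is precisely the assertion of the corollary.

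I do not expect any genuine obstacle here: the reduction was already anticipated in the sentence immediately following the statement of Assumption \ref{assumption: torsion free}, and the substantive work has all been carried out in Sections \ref{section: semi admissibility} and \ref{section: cellularity}. If one wished to make the argument self--contained at this point, one could recall the trichotomy driving the theorem's proof --- namely that over a field exactly one of the possibilities ``$e_1 = 0$'', ``the parameters are admissible'', or ``the parameters are $d$--semi--admissible for some $0 < d < r$'' occurs --- and note that cellularity of $\both{n, S, r}$ is either classical (the Hecke case), part of the admissible theory cited in the introduction, or established by the extension argument of Lemma \ref{remark on extensions of cellular algebras} via Proposition \ref{proposition:  isomorphism for ideal in d admissible case}. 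But this merely repeats the proof of the theorem, so the clean route is simply to invoke it.
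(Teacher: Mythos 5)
Your argument is exactly the paper's: the corollary follows by noting that Assumption \ref{assumption: torsion free} holds automatically over a field (a field is an integral domain, and every vector space is torsion free) and then invoking the preceding theorem. Correct and complete.
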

 
 \begin{proof} In case the ground ring is a field, Assumption \ref{assumption: torsion free} holds automatically.  
 \end{proof}
 
 \begin{corollary}  Let $F$ be an algebraically closed field and consider an 
 affine (resp.\   degenerate affine)  BMW algebra $\affboth{n, F}$ over $F$.   
    Let $M$ be a simple finite dimensional     $\affboth{n, F}$--module.   If $e_1 M = 0$,  then $M$ factors through a cyclotomic (resp.\  degenerate cyclotomic) Hecke algebra.  If $e_1 M \ne 0$,  then $M$ factors through  
    cyclotomic (resp.\  degenerate cyclotomic) BMW algebra with admissible parameters.  
 \end{corollary}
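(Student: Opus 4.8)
The plan is to replace the infinite-dimensional affine algebra by a finite-dimensional cyclotomic quotient tailored to $M$, and then to feed that quotient into the semi-admissibility analysis of Section \ref{section: semi admissibility}. We may assume $n\ge 2$; for $n\le 1$ the algebra $\affboth{n,F}$ has no generator $e_1$ and the statement is trivial. Since $M$ is finite-dimensional over the algebraically closed field $F$, the minimal polynomial of the action of $y_1$ on $M$ splits into linear factors; choosing its roots with multiplicities as $u_1,\dots,u_r$ --- all nonzero in the non-degenerate case, as $y_1$ acts invertibly on $M$ --- we obtain $(y_1-u_1)\cdots(y_1-u_r)=0$ on $M$, so the $\affboth{n,F}$-action on $M$ factors through $A:=\both{n,F,r}$, the cyclotomic (resp.\ degenerate cyclotomic) BMW algebra with the given parameters $\rho,q,\Omega$ (resp.\ $\Omega$) together with $u_1,\dots,u_r$. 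Since $F$ is a field, Assumption \ref{assumption: torsion free} holds for $A$, so exactly one of the three possibilities of Section \ref{section: semi admissibility} holds for $A$.

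Suppose first $e_1M=0$. In $A$ the idempotents $e_1,\dots,e_{n-1}$ are pairwise conjugate by invertible elements --- explicitly $e_{i+1}=(g_ig_{i+1})\,e_i\,(g_ig_{i+1})\inv$ in the non-degenerate case, and $e_{i+1}=s_is_{i+1}\,e_i\,s_{i+1}s_i$ (a relation already valid in the Brauer algebra) in the degenerate case --- so $e_iM=0$ for all $i$. Hence the ideal $\langle e_1\rangle$ of $A$ generated by $e_1$ annihilates $M$, and $M$ factors through $A/\langle e_1\rangle$, which by the short exact sequence \refc{introduction short exact sequence 1} is the cyclotomic Hecke algebra (resp.\ degenerate cyclotomic Hecke algebra) $H_n(u_1,\dots,u_r)$.

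Now suppose $e_1M\ne 0$, so that $e_1\ne 0$ in $A$. Possibility (1) of the trichotomy is thereby excluded, for it would force $e_{n-1}=0$ in $A$, hence $e_1=0$ in $A$ (the two being conjugate). So the parameters of $A$ are either admissible or $d$-semi-admissible for some $d$ with $0<d<r$. If they are admissible, then $M$ already factors through $A$ and we are done. If they are $d$-semi-admissible, renumber the $u_i$ so that $(y_1-u_1)\cdots(y_1-u_d)\,e_1=0$ in $A$, and set $A_d:=\both{n,F,d}(u_1,\dots,u_d)$. Then the parameters of $A_d$ are admissible by Lemma \ref{lemma:   semi--admissibility implies admissibility in fewer variables}; there is a surjection $\theta:A\to A_d$ carrying generators to generators by Lemma \ref{lemma: homomorphism from A n r to A n d}; and by Proposition \ref{proposition:  isomorphism for ideal in d admissible case}, $\theta$ restricts to an isomorphism of the ideal $J_r:=\langle e_{n-1}\rangle$ of $A$ (which equals $\langle e_1\rangle$, since $e_1$ and $e_{n-1}$ are conjugate by invertible elements) onto the corresponding ideal $J_d$ of $A_d$.

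The remaining point is to show $M$ factors through $\theta$, i.e.\ that $\ker\theta$ annihilates $M$; this is the crux of the argument. Because $M$ is simple and $0\ne e_1M\subseteq J_rM$, the submodule $J_rM$ must be all of $M$. Because $\theta$ is injective on $J_r$, we have $\ker\theta\cap J_r=0$; and since $\ker\theta$ and $J_r$ are both two-sided ideals, $(\ker\theta)\,J_r\subseteq\ker\theta\cap J_r=0$. Consequently
$$(\ker\theta)\,M=(\ker\theta)(J_r M)=\bigl((\ker\theta)\,J_r\bigr)M=0,$$
so the $A$-action on $M$ --- and hence the original $\affboth{n,F}$-action --- factors through $A/\ker\theta\cong A_d$, a cyclotomic (resp.\ degenerate cyclotomic) BMW algebra with admissible parameters. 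The main obstacle is exactly this last step: we are reducing not the algebra $A$, whose parameters may be genuinely non-admissible, but its action on the \emph{simple} module $M$, and the reduction succeeds only because simplicity forces the ideal $J_r$ to act as the whole of $M$ while $\theta$ is injective on $J_r$.
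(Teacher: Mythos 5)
Your argument is correct, and it follows the paper's proof in its outer structure (split the minimal polynomial of $y_1$ to land in a cyclotomic quotient $\both{n,F,r}(u_1,\dots,u_r)$; dispose of the $e_1M=0$ case via the Hecke quotient; invoke the trichotomy of Section 5), but it diverges at the one step that actually requires an idea, namely why a simple module with $e_1M\ne 0$ over a $d$--semi--admissible quotient factors through $\theta:\both{n,F,r}(u_1,\dots,u_r)\to\both{n,F,d}(u_1,\dots,u_d)$. The paper argues through cellularity: $M$ is the simple head of a cell module, the cell module belongs to the ideal $\langle e_{n-1}\rangle_r$ because $e_1M\ne 0$, and the cellular basis of that ideal was constructed as $\theta\inv$ of a cellular basis downstairs, so those cell modules factor through $\theta$. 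You instead give a purely ideal--theoretic argument: $\theta$ is injective on $J_r=\langle e_{n-1}\rangle_r$ (the proposition on the isomorphism of ideals), so $\ker\theta\cap J_r=0$, hence $(\ker\theta)J_r=0$; simplicity and $e_1M\ne0$ force $J_rM=M$; therefore $(\ker\theta)M=((\ker\theta)J_r)M=0$. This is a genuine simplification: it needs nothing from Graham--Lehrer theory, only the freeness/rank count behind the ideal isomorphism, and it isolates a clean general principle (a surjection that is injective on an ideal $J$ is faithful on every simple module not killed by $J$). What the paper's route buys in exchange is finer information --- it tells you which cell modules of $\both{n,F,r}$ carry the simples with $e_1M\ne0$ and identifies them with cell modules of the admissible algebra --- which is the form of the statement needed for classifying simples. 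Both arguments rest on the same load--bearing input, the isomorphism $\langle e_{n-1}\rangle_r\cong\langle e_{n-1}\rangle_d$.
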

 
 \begin{proof}   In the degenerate case, this result is contained in~\cite{ariki-mathas-rui},  Theorem 7.19 and Proposition 3.11 (but with the  hypothesis that the characteristic of the field is $\ne 2$.)
 
 Because the field is algebraically closed,  the minimal polynomial of $y_1$ on $M$ factors
 over $F$.    Hence $M$ factors through some cyclotomic (resp.\  degenerate cyclotomic) BMW algebra.  If $e_1 M = 0$, then $M$ factors through the corresponding cyclotomic (resp.\  degenerate cyclotomic) Hecke algebra.  If $e_1 M \ne 0$,
 then the parameters of the cyclotomic (resp.\  degenerate cyclotomic) BMW algebra must be either admissible or semi--admissible. 
 
 Let us assume a cyclotomic (resp.\  degenerate cyclotomic) BMW algebra $\both{n, F, r}  = 
 \both{n F, r}(u_1, \dots, u_r)$ with $d$--semi--admissible parameters  ($d < r$).   Then $M$ is the simple head of a cell module
 $\Delta^\la$, and since $e_1 M \ne 0$,   the cell module belongs to the ideal $\langle e_{n-1} \rangle_r$.  
   But the cell modules belonging to $\langle e_{n-1} \rangle_r$ factor through
 $\theta : \both{n F, r}(u_1, \dots, u_r) \to \both{n F, d}(u_1, \dots, u_d)$,  and the latter algebra has admissible parameters.
\end{proof}
 
The following proposition depends only on the material in this paper up through Lemma \ref {lemma:   semi--admissibility implies admissibility in fewer variables}.

\begin{proposition} \label{proposition:  existence of fd module with e1 not zero}
 Let $F$ be an algebraically closed field and consider an 
 affine (resp.\   degenerate affine)  BMW algebra $\affboth{n, F}$ over $F$.     The following are equivalent:
 \begin{enumerate}
 \item  \label{proposition:  existence of fd module with e1 not zero item 1}
 There exist $r >0$  and $u_1, \dots, u_r \in F$  such that the parameters of  $\affboth{n, F}$ together with
 $u_1, \dots, u_r$  are admissible.
  \item  \label{proposition:  existence of fd module with e1 not zero item 2}
   $\affboth{n, F}$  admits a finite dimensional module on which $e_1$ is non--zero.
 \end{enumerate}
\end{proposition}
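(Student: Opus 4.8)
The plan is to prove the two implications separately, relying only on the structure theory of cyclotomic BMW algebras with admissible parameters and on Lemma~\ref{lemma:   semi--admissibility implies admissibility in fewer variables}; as noted, nothing later in the paper is needed. Throughout I take $n \ge 2$, since for $n \le 1$ the generator $e_1$ does not occur.

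For $\refc{proposition:  existence of fd module with e1 not zero item 1} \implies \refc{proposition:  existence of fd module with e1 not zero item 2}$ I would start from a choice of $u_1, \dots, u_r \in F$ making the parameters admissible. By definition of admissibility, $\{e_1, y_1 e_1, \dots, y_1^{r-1} e_1\}$ is linearly independent in $\both{2, F, r}(u_1, \dots, u_r)$, so $e_1 \ne 0$ there, and hence $e_1 \ne 0$ in $\both{n, F, r}(u_1, \dots, u_r)$ as well: one uses that $\both{2, F, r}$ maps to $\both{n, F, r}$ sending $e_1$ to $e_1$, together with the observation that $e_1 = 0$ in $\both{2}$ would force every $e_i$ to vanish in $\both{n}$ via $e_{i+1} e_i e_{i+1} = e_{i+1}$. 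Since $\both{n, F, r}(u_1, \dots, u_r)$ is finite dimensional over $F$, its left regular module is a finite dimensional module on which $e_1$ acts nontrivially, and restricting it along the quotient map $\affboth{n, F} \to \both{n, F, r}(u_1, \dots, u_r)$ yields the required $\affboth{n, F}$--module.

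The other direction carries the content. Given a finite dimensional $\affboth{n, F}$--module $M$ with $e_1 M \ne 0$, the first step is to exploit that $F$ is algebraically closed: the minimal polynomial of $y_1$ acting on $M$ splits into linear factors, say $(y - u_1) \cdots (y - u_r)$ with $u_i \in F$ and $r \ge 1$, and in the non--degenerate case the $u_i$ are automatically invertible because $y_1$ is, so that the cyclotomic quotient $\both{n, F, r}(u_1, \dots, u_r)$ is legitimately defined. Then $M$ factors through $\both{n, F, r}(u_1, \dots, u_r)$, and $e_1 \ne 0$ in this algebra because $e_1 M \ne 0$; as above, this forces $e_1 \ne 0$ in $\both{2, F, r}(u_1, \dots, u_r)$ too. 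Now $F$ is a field, so Assumption~\ref{assumption: torsion free} holds and the trichotomy recorded after it applies to $\both{2, F, r}(u_1, \dots, u_r)$: either $e_1 = 0$, which is excluded; or the parameters are admissible, in which case $\refc{proposition:  existence of fd module with e1 not zero item 1}$ holds with $u_1, \dots, u_r$; or the parameters are $d$--semi--admissible for some $0 < d < r$. In that last case, after renumbering the roots one has $(y_1 - u_1) \cdots (y_1 - u_d) e_1 = 0$, and Lemma~\ref{lemma:   semi--admissibility implies admissibility in fewer variables}, part~\refc{lemma:   semi--admissibility implies admissibility in fewer variables item 2}, states precisely that the parameters of $\affboth{n, F}$ together with $u_1, \dots, u_d$ are admissible; so $\refc{proposition:  existence of fd module with e1 not zero item 1}$ holds with $d$ in place of $r$.

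I do not expect a genuine obstacle, since the substantive work is already packaged in Lemma~\ref{lemma:   semi--admissibility implies admissibility in fewer variables}. The only steps that need care --- and the closest thing to a difficulty --- are the elementary bookkeeping identifying ``$e_1$ nonzero on $M$'', ``$e_1 \ne 0$ in $\both{n, F, r}$'' and ``$e_1 \ne 0$ in $\both{2, F, r}$'' (all equivalent, because a vanishing $e_1$ in $\both{2}$ forces all $e_i$ to vanish in $\both{n}$), and, in the non--degenerate setting, checking that the eigenvalues of $y_1$ on a finite dimensional module are invertible, so that passing to a cyclotomic quotient is meaningful.
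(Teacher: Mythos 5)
Your argument is correct and coincides with the paper's own proof: the substantive direction \refc{proposition:  existence of fd module with e1 not zero item 2}$\implies$\refc{proposition:  existence of fd module with e1 not zero item 1} factors $M$ through the cyclotomic quotient determined by the (split) minimal polynomial of $y_1$, passes to $e_1\ne 0$ in $\both{2,F,r}$, invokes Assumption~\ref{assumption: torsion free} and the ensuing trichotomy, and finishes with Lemma~\ref{lemma:   semi--admissibility implies admissibility in fewer variables} in the semi--admissible case, exactly as the paper does. One small repair for \refc{proposition:  existence of fd module with e1 not zero item 1}$\implies$\refc{proposition:  existence of fd module with e1 not zero item 2}: your justification that admissibility forces $e_1\ne 0$ in $\both{n,F,r}$ does not work as written (the homomorphism $\both{2,F,r}\to\both{n,F,r}$ only yields the reverse implication, and $e_{i+1}e_ie_{i+1}=e_{i+1}$ merely propagates vanishing among the $e_i$ inside $\both{n,F,r}$); the clean argument is that for admissible parameters $\both{n,F,r}$ is free with the regular monomials as basis (Corollary~\ref{corollary:  freeness of deg BMW with admissible parameters} and its non--degenerate analogue), and $e_1$ is one of those basis elements.
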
 

\begin{proof}  If \refc{proposition:  existence of fd module with e1 not zero item 1} holds, then $\both{n, F, r}(u_1, \dots, u_r)$ is a finite dimensional $\affboth{n, F}$ module on which $e_1 \ne 0$.   If \refc{proposition:  existence of fd module with e1 not zero item 2} holds,  let $u_1, \dots, u_r$ be the roots of the minimal polynomial of $y_1$ acting on $M$.   The module $M$ factors through the cyclotomic algebra $\both{n, F, r}(u_1, \dots, u_r)$.  Since $e_1 M \ne 0$,  it follows that $e_1 \ne 0$ in $\both{n, F, r}(u_1, \dots, u_r)$ and hence also in 
$\both{2, F, r}(u_1, \dots, u_r)$.    Since $F$ is a field, Assumption \ref{assumption: torsion free} holds for $\both{2, F, r}(u_1, \dots, u_r)$.  Therefore, the parameters of $\both{2, F, r}(u_1, \dots, u_r)$ are either admissible or 
$d$--semi--admissible for some $d$ with $0 < d < r$.  In the latter case, after renumbering the roots $u_i$, 
$\both{2, F, d}(u_1, \dots, u_d)$ has admissible parameters, by Lemma \ref {lemma:   semi--admissibility implies admissibility in fewer variables}.  Thus \refc{proposition:  existence of fd module with e1 not zero item 1} holds.  
\end{proof}

 \section{Rationality of parameters for affine algebras}
 
 \subsection{Rationality of parameters for degenerate affine BMW algebras}
 Ariki, Mathas, and Rui call the parameter set $\Omega$ of a degenerate affine (or cyclotomic) BMW algebra {\em rational} if the generating function $\sum_{a \ge 0} \omega_a t^{-a}$ is a rational function. 
 They prove the following theorem, under the additional hypothesis that the characteristic of the field is different from 2.
 
 \begin{theorem}  \label{theorem:  ARM theorem on deg. affine algebras with rational coefficient sequence}
 Consider the degenerate affine BMW algebra $\affdegbmw{n}$, $n \ge 2$,   over an algebraically closed field $F$, with parameters  $\Omega = (\omega_a)_{a \ge 0}$. Suppose that $e_1 \ne 0$ in $\affdegbmw{n}.$  The following are equivalent.
 \begin{enumerate}
  \item  \label{armthm item: rational function}
  The generating function  $\sum_{a \ge 0} \omega_a t^{-a}$  is a rational function in $F(t)$.  
 \item  \label{armthm item: recursion}
 $\Omega$ satisfies a linear homogeneous recursion; i.e.\    there exist $r>0$, $N \ge 0$ and $a_0, a_1, \dots, a_{r-1} \in F$  such that
$\omega_{r + \ell} +    \sum_{j = 0}^{r-1}  a_j  \omega_{j + \ell}  = 0,$ for all  $\ell \ge N$.
\item   \label{armthm item: global recursion}
There exist $r>0$  and $a_0, a_1, \dots, a_{r-1} \in F$  such that 
$\omega_{r + \ell} +    \sum_{j = 0}^{r-1}  a_j  \omega_{j + \ell}  = 0,$ for all  $\ell \ge 0$.
 \item  \label{armthm item: admissible}
 There exist $r >0$  and $u_1, \dots, u_r \in F$  such that  the parameters  \ $\Omega$  and  $u_1, \dots, u_r$ are admissible.
 \item   \label{armthm item: fd repns}
 $\affdegbmw{n}$  admits a finite dimensional module on which $e_1$ is non--zero.
 \end{enumerate}
 \end{theorem}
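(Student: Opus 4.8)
The plan is to establish the cycle $\refc{armthm item: global recursion}\Rightarrow\refc{armthm item: admissible}\Leftrightarrow\refc{armthm item: fd repns}\Rightarrow\refc{armthm item: global recursion}$ together with the equivalences $\refc{armthm item: rational function}\Leftrightarrow\refc{armthm item: recursion}\Leftrightarrow\refc{armthm item: global recursion}$, since these together cover all five conditions. Several of these implications are soft. The equivalence $\refc{armthm item: admissible}\Leftrightarrow\refc{armthm item: fd repns}$ is exactly Proposition \ref{proposition: existence of fd module with e1 not zero} for the degenerate affine algebra, and does not even use the standing hypothesis $e_1\ne0$. For $\refc{armthm item: rational function}\Leftrightarrow\refc{armthm item: recursion}$ I would quote the elementary fact that, with $z=t\inv$, a formal power series $\sum_{a\ge0}\omega_a z^a$ represents a rational function of $z$ precisely when its coefficient sequence is eventually linearly recurrent --- the recurrence being read off the denominator in lowest terms, and conversely. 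The implication $\refc{armthm item: global recursion}\Rightarrow\refc{armthm item: recursion}$ is immediate; for the converse, if $p_0(u)=\sum_{k=0}^{d}b_ku^k$ with $b_d=1$ is a characteristic polynomial of an eventual recurrence holding for $\ell\ge N$, then $p(u):=u^{N}p_0(u)$, of degree $r=N+d$ with coefficients $a_0,\dots,a_r$, satisfies $\sum_{j=0}^{r}a_j\omega_{j+\ell}=\sum_{k=0}^{d}b_k\omega_{k+N+\ell}=0$ for every $\ell\ge0$, since $N+\ell\ge N$. Thus $\refc{armthm item: rational function}$, $\refc{armthm item: recursion}$, $\refc{armthm item: global recursion}$ are equivalent with no appeal to $e_1\ne0$.

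Next I would prove $\refc{armthm item: fd repns}\Rightarrow\refc{armthm item: global recursion}$. Let $M$ be a finite dimensional $\affdegbmw{n}$--module with $e_1M\ne0$. Since $F$ is algebraically closed and $\dim_F M<\infty$, the minimal polynomial of $y_1$ on $M$ splits over $F$, say as $(u-u_1)\cdots(u-u_r)$ with $u_i\in F$, $r\ge1$; hence $M$ is a module over $\degbmw{n,F,r}(\Omega;u_1,\dots,u_r)$, in which $e_1\ne0$, and therefore $e_1\ne0$ in $\degbmw{2,F,r}(\Omega;u_1,\dots,u_r)$, as the natural homomorphism $\degbmw{2,F,r}\to\degbmw{n,F,r}$ sends $e_1$ to $e_1$. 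Over the field $F$ a nonzero $e_1$ is not a torsion element, so Lemma \ref{lemma: periodicity of omegas when e1 is torsion free} gives $\sum_{j=0}^{r}a_j\omega_{j+\ell}=0$ for all $\ell\ge0$, where $(u-u_1)\cdots(u-u_r)=\sum_j a_ju^j$; that is $\refc{armthm item: global recursion}$.

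The remaining implication $\refc{armthm item: global recursion}\Rightarrow\refc{armthm item: admissible}$ is the substance of the theorem, and the step I expect to be the main obstacle. Given $\refc{armthm item: global recursion}$, I would factor the characteristic polynomial $p(u)=\sum_{j=0}^{r}a_ju^j=(u-u_1)\cdots(u-u_r)$ over the algebraically closed $F$ and pass to $\degbmw{2,F,r}(\Omega;u_1,\dots,u_r)$. The recurrence $\refc{armthm item: global recursion}$ says precisely that $e_1y_1^{\ell}p(y_1)e_1=0$ for all $\ell\ge0$, so the evident obstruction to $e_1$ surviving the cyclotomic relation is absent; the real point is to deduce from $e_1\ne0$ in $\affdegbmw{n}$ (hence in $\affdegbmw{2, F}$) that $e_1\ne0$ in $\degbmw{2,F,r}(\Omega;u_1,\dots,u_r)$ --- equivalently, that $e_1$ does not lie in the two--sided ideal of $\affdegbmw{2, F}$ generated by $p(y_1)$. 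I would verify this using the explicit basis of $\affdegbmw{2, F}$ from~\cite{ariki-mathas-rui} (the two--strand case of the $r$--regular monomial basis of Proposition \ref{proposition: spanning by r regular monomials}, equivalently of the basis recalled in Section \ref{subsection: spanning set for degenerate algebra}): the ideal $\affdegbmw{2, F}\,e_1\,\affdegbmw{2, F}$ is free on $\{y_1^{i}e_1y_1^{j}:i,j\ge0\}$, its quotient is the degenerate affine Hecke algebra of rank two, the relations $e_1y_1^{a}e_1=\omega_ae_1$ and $e_1(y_1+y_2)=0$ pin down the left and right $y$--action on this ideal, and tracking the reduction of $y_1^{i}e_1y_1^{j}$ modulo $p(y_1)$ shows the coefficient of $e_1$ is not forced to vanish exactly because $\refc{armthm item: global recursion}$ holds. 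Once $e_1\ne0$ in $\degbmw{2,F,r}$ is in hand, Assumption \ref{assumption: torsion free} holds over $F$, so its parameters are admissible or $d$--semi--admissible for some $0<d<r$; in the latter case, after renumbering the roots, $\degbmw{2,F,d}(\Omega;u_1,\dots,u_d)$ has admissible parameters by Lemma \ref{lemma: semi--admissibility implies admissibility in fewer variables}. Either way $\refc{armthm item: admissible}$ follows.

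The overall strategy follows that of~\cite{ariki-mathas-rui}, with the semi--admissibility reduction providing a clean endgame. The only place the characteristic of $F$ intervenes is through the quantities $\eta_a^{\pm}(u_1,\dots,u_r)$ that govern admissibility; by Lemma \ref{lemma: why 1/2 is not needed} these already lie in $\Z[\ubold_1,\dots,\ubold_r]$, so admissibility, and the universal admissible ring $\mathcal Z=\Z[\ubold_1,\dots,\ubold_r]$ entering Theorem \ref{theorem: generic semisimplicity for deg bmw from ARM}, make sense without adjoining $\tfrac12$, taking in characteristic $2$ the explicit form of Example \ref{example: evaluation of eta's when characteristic is 2}. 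Consequently every step above is valid over an arbitrary algebraically closed field, which is the improvement over the original statement of the theorem in~\cite{ariki-mathas-rui}.
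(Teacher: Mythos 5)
Your treatment of the peripheral implications is sound: the equivalence of rationality with an eventual recurrence, the trick of multiplying the characteristic polynomial by $u^N$ to upgrade an eventual recurrence to a global one (a correct shortcut the paper obtains only indirectly, via the cycle through admissibility), and the equivalence of admissibility with the existence of a finite dimensional module with $e_1M\ne0$ all check out. The gap is in the one implication that carries the weight of the theorem, namely that the global recursion implies the existence of admissible parameters. You propose to factor the characteristic polynomial $p(u)=(u-u_1)\cdots(u-u_r)$ of the given recursion, pass to $\degbmw{2,F,r}(\Omega;u_1,\dots,u_r)$, and argue that $e_1\ne0$ survives this quotient ``exactly because'' the recursion holds. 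That is false: the recursion is a necessary condition for $e_1$ to survive, but far from sufficient, and the admissible parameters need not be among the roots of whatever recursion you are handed. A concrete counterexample: take $\Omega=0$ over $F=\C$. Then $e_1\ne0$ in $\affdegbmw{2,\C}(0)$, since $\Omega=0$ is $(w,-w)$--admissible for any $w\ne0$ (one checks $\eta_a^+(w,-w)=0$ for all $a$), so $\degbmw{2,\C,2}(0;w,-w)$ is a quotient in which $e_1\ne0$. The zero sequence satisfies the global recursion $\omega_{\ell+1}=0$ with characteristic polynomial $p(u)=u$; but in $\degbmw{2,\C,1}(0;0)$ the skein relation gives $y_2=(1-e_1)s_1$, and then $0=e_1(y_1+y_2)=e_1(1-e_1)s_1=e_1s_1=e_1$, using $e_1^2=\omega_0e_1=0$. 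So $e_1$ dies in exactly the quotient your argument constructs, no sub-multiset of $\{0\}$ carries admissible parameters, and your method yields nothing --- even though the conclusion is true here, via the parameter $(-1/2)$ or the pair $(w,-w)$, neither of which is a root of the chosen recursion.

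What your proposal omits is the second family of constraints that $e_1\ne0$ in the \emph{affine} algebra imposes on $\Omega$: the quadratic relations $2\omega_{2a+1}=-\omega_{2a}+\sum_{b}(-1)^{b-1}\omega_{b-1}\omega_{2a+1-b}$ recorded in the paper's lemma on weak admissibility conditions. The actual proof of the hard implication combines these with rationality to force the generating function $\sum_a\omega_at^{-a}$ into the specific shape $\frac12-t+(t\pm\frac{(-1)^{r-1}}{2})\prod_i\frac{t+u_i}{t-u_i}$ that characterizes admissibility; for $\char(F)\ne2$ this is Proposition 3.11 of \cite{ariki-mathas-rui}, which the paper simply cites. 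The case $\char(F)=2$ --- the paper's genuine new content --- cannot be dispatched by your closing remark that $\eta_a^{\pm}$ lies in $\Z[\ubold_1,\dots,\ubold_r]$: there one uses the degenerate form $\omega_{2a}=\omega_a^2$ of the quadratic constraints together with the Buzzard lemma to show $\omega_a=\sum_iu_i^a$ for distinct $u_i$ and $\omega_0\in\{0,1\}$, and then the characteristic--two evaluation of $\eta_a^+$ to recognize $(u_1,\dots,u_d)$-- or $(u_1,\dots,u_d,0)$--admissibility. None of this appears in your proposal, so the central implication remains unproved.
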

 
 \begin{proof}  \refc{armthm item: rational function} $\iff$ \refc{armthm item: recursion} $\Longleftarrow$ \refc{armthm item: global recursion} is easy, and  \refc{armthm item: global recursion} $\Longleftarrow \refc{armthm item: admissible}$ holds by Lemma \ref{lemma: periodicity of omegas when e1 is torsion free}. 
  Proposition \ref{proposition:  existence of fd module with e1 not zero} gives  \refc{armthm item: admissible} $\iff$ \refc{armthm item: fd repns}.  
 The implication \refc{armthm item: rational function} $\implies$ \refc{armthm item: admissible}  is proved in~\cite{ariki-mathas-rui}, Proposition 3.11,  under the assumption that the characteristic of the field is not equal to 2.  So it remains only to prove this implication for a field of characteristic 2.  This will be done with the aid of two  lemmas.
  \end{proof}
  
  \begin{lemma}   \label{lemma:  ARM weak admissibility conditions}
  Consider the degenerate affine BMW algebra $\affdegbmw{2, S}$  over a ring  $S$, with parameters  $\Omega = (\omega_a)_{a \ge 0}$. Suppose that $e_1$  is not a torsion element over $S$.  Then:
  \begin{enumerate}
  \item   \label{lemma:  ARM weak admissibility conditions item 1}
  $2 \omega_{2a+1}= -\omega_{2a}
            +\sum_{b=1}^{2a+1}(-1)^{b-1}\omega_{b-1}\omega_{2a+1-b}$ \ for $a \ge 0$.
  \item   \label{lemma:  ARM weak admissibility conditions item 2}
  If the characteristic of $S$ is $2$,  then  $\omega_{2a} = \omega_a^2$ \  for $a \ge 0$.  
  \end{enumerate}
  \end{lemma}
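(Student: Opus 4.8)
Both parts will be obtained by evaluating the element $e_1 s_1 y_1^k e_1 \in \affdegbmw{2,S}$ in two ways, for $k\ge 1$, and then cancelling $e_1$ (legitimate because $e_1$ is not torsion over $S$). On one side, the tangle relation $e_1 s_1 = e_1$ together with the compression and idempotent relations give $e_1 s_1 y_1^k e_1 = e_1 y_1^k e_1 = \omega_k e_1$. On the other side, one pushes $y_1^k$ across $s_1$ by iterating the skein relation.

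The first step of the plan is to prove, by induction on $k$ using $s_1 y_1 - y_2 s_1 = e_1 - 1$, the identity
\begin{equation*}
s_1 y_1^k = y_2^k s_1 + \sum_{i + j = k - 1} y_2^i (e_1 - 1) y_1^j
\end{equation*}
in $\affdegbmw{2,S}$ (the sum over non-negative $i,j$); the inductive step multiplies on the right by $y_1$ and reapplies the skein relation to the leading term $y_2^k s_1 y_1$. Next I would record the elementary consequences of the anti-symmetry relations $e_1(y_1+y_2)=0=(y_1+y_2)e_1$ and the commutativity $y_1y_2=y_2y_1$, namely $e_1 y_2^i = (-1)^i e_1 y_1^i$ and $y_2^i e_1 = (-1)^i y_1^i e_1$ (immediate inductions), which combined with the compression and idempotent relations give
\begin{equation*}
e_1 y_2^i e_1 y_1^j e_1 = (-1)^i \omega_i \omega_j\, e_1 \qquad\text{and}\qquad e_1 y_2^i y_1^j e_1 = (-1)^i \omega_{i+j}\, e_1 .
\end{equation*}

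Then I would multiply the first displayed identity on both sides by $e_1$ and use $s_1 e_1 = e_1 = e_1 s_1$: the left-hand side is $\omega_k e_1$, the term $e_1 y_2^k s_1 e_1 = e_1 y_2^k e_1$ equals $(-1)^k \omega_k e_1$, and the sum collapses to $\sum_{i+j=k-1}(-1)^i(\omega_i\omega_j - \omega_{k-1})e_1$. Cancelling $e_1$ yields
\begin{equation*}
\omega_k = (-1)^k \omega_k + \sum_{i+j=k-1}(-1)^i\omega_i\omega_j - \omega_{k-1}\sum_{i=0}^{k-1}(-1)^i .
\end{equation*}
Specializing to $k = 2a+1$, where $(-1)^k = -1$ and $\sum_{i=0}^{2a}(-1)^i = 1$, and reindexing $i = b-1$, produces precisely the relation of part \refc{lemma:  ARM weak admissibility conditions item 1}. (For even $k$ the identity degenerates to the automatically true $\sum_{i+j=k-1}(-1)^i\omega_i\omega_j = 0$, so nothing is lost there.)

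Finally, part \refc{lemma:  ARM weak admissibility conditions item 2} will follow from part \refc{lemma:  ARM weak admissibility conditions item 1} by a parity argument in characteristic $2$: the left-hand side becomes $0$ and all signs disappear, so $\omega_{2a} = \sum_{i+j=2a}\omega_i\omega_j$; in that sum the off-center terms cancel in pairs $\omega_i\omega_{2a-i}+\omega_{2a-i}\omega_i = 2\omega_i\omega_{2a-i} = 0$, leaving only the middle term $\omega_a^2$. The one delicate point in the whole argument is getting the combinatorial identity for $s_1 y_1^k$ right and keeping track of which $y_2$-powers sit immediately adjacent to $e_1$ (so that the anti-symmetry relation can convert them to $\pm y_1$-powers, repeatedly using $y_1y_2=y_2y_1$); once that bookkeeping is in place, the rest is routine manipulation with the defining relations.
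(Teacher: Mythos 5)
Your argument is correct, and every step checks out against the defining relations: the induction giving $s_1 y_1^k = y_2^k s_1 + \sum_{i+j=k-1} y_2^i(e_1-1)y_1^j$ is right, the sign bookkeeping via $e_1 y_2^i = (-1)^i e_1 y_1^i$ and $y_2^i e_1 = (-1)^i y_1^i e_1$ is legitimate (anti-symmetry plus $y_1y_2=y_2y_1$), and the final cancellation of $e_1$ is exactly where the no-torsion hypothesis is used. The specialization to odd $k$ reproduces part (1) verbatim, and your observation that even $k$ yields a tautology is also correct. Where you differ from the paper: the paper does not prove part (1) at all — it simply cites Corollary 2.4 of Ariki, Mathas and Rui — and then derives part (2) from part (1) by the same characteristic-$2$ simplification you give (the $2\omega_{2a+1}$ term and the off-center pairs $2\omega_i\omega_{2a-i}$ all vanish, leaving $\omega_{2a}=\omega_a^2$). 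So your proposal supplies a self-contained derivation of the quoted identity from the presentation of $\affdegbmw{2,S}$, essentially reconstructing the computation behind the cited result (evaluating $e_1 s_1 y_1^k e_1$ two ways), at the cost of more length; the paper's version buys brevity by outsourcing part (1). Both are valid, and for part (2) the two arguments coincide.
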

  
  \begin{proof} Part \refc{lemma:  ARM weak admissibility conditions item 1} is~\cite{ariki-mathas-rui},  Corollary 2.4.     If the characteristic is $2$,  then
  the equation in part \refc{lemma:  ARM weak admissibility conditions item 1} simplifies to  $\omega_{2a} = \omega_a^2$.  
  \end{proof}
  
  \vbox{
  The proof of the following lemma was suggested by Kevin Buzzard, via {\tt mathoverflow.net}.
  
  \begin{lemma}  \label{lemma:  Kevin Buzzard lemma}
  Let $F$ be an algebraically closed field of characteristic $2$.  Suppose that\ 
  $\ \Omega = (\omega_a)_{a \ge 0}$ satisfies a linear homogeneous recursion, as in Theorem \ref{theorem:  ARM theorem on deg. affine algebras with rational coefficient sequence} 
  \refc{armthm item: recursion}
   and
  $\omega_{2a} = \omega_a^2$  for $a \ge 0$.
 Then there exist distinct $u_1, \dots, u_d \in F$  such that $\omega_a = \sum_{i = 1}^d u_i^a$ for all 
 $a \ge 1$, and $\omega_0 \in  \{0, 1\}$.    
\end{lemma}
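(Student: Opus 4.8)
The plan is to determine the sequence completely for large indices and then bootstrap downward. First, $\omega_0 \in \{0,1\}$ is immediate: putting $a=0$ in $\omega_{2a}=\omega_a^2$ gives $\omega_0=\omega_0^2$, and $F$ is a domain. Now, since $\Omega$ satisfies a linear homogeneous recursion for $a \ge N$, over the algebraically closed field $F$ there is a bound $M$ past which
\[
\omega_a \;=\; \sum_{i=1}^{s} Q_i(a)\,\mu_i^{\,a},
\]
where the $\mu_i \in F^\times$ are distinct, each $Q_i \not\equiv 0$, and each $Q_i$ is an $F$--linear combination of the binomial functions $\binom a0,\binom a1,\dots$ with its top term genuinely present --- this is the description of the solution space of a recursion with characteristic polynomial $\prod_i(x-\mu_i)^{m_i}$ valid in any characteristic. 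Applying $\omega_{2a}=\omega_a^2$ together with $\mathrm{char}\,F=2$ gives $\omega_{2a}=\sum_i Q_i(a)^2(\mu_i^2)^a$; since $x\mapsto x^2$ is injective on $F$ the $\mu_i^2$ are again distinct, so comparing this with $\omega_{2a}=\sum_i Q_i(2a)(\mu_i^2)^a$ and using linear independence of the functions $a\mapsto\binom ak(\mu_i^2)^a$ yields $Q_i(2a)=Q_i(a)^2$ for all $a\ge M$, and hence identically, for each $i$.

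The main obstacle is to show that a function $Q$ lying in the $F$--span of $\binom a0,\dots,\binom a{m-1}$, with $\binom a{m-1}$ appearing with a nonzero coefficient $c_{m-1}$, and satisfying $Q(2a)=Q(a)^2$, must be constant. I would argue in the binomial basis. From $(1+x)^{2a}=(1+x^2)^a$ over $\mathbb F_2$ one reads off $\binom{2a}{k}\equiv\binom a{k/2}\pmod 2$ for $k$ even and $\equiv 0$ for $k$ odd, so $Q(2a)$ involves only the functions $\binom aj$ with $2j\le m-1$; in particular it does \emph{not} involve $\binom a{m-1}$ once $m\ge 2$. On the other hand each $\binom ak$ reduces modulo $2$ into the prime field $\mathbb F_2\subseteq F$, on which squaring is the identity, so $Q(a)^2=\sum_k c_k^2\binom ak$ (reductions understood), which still involves $\binom a{m-1}$ with coefficient $c_{m-1}^2$. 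Since the $\binom ak$ are $F$--linearly independent as functions (extract coefficients by forward differences), equating the $\binom a{m-1}$ coefficients forces $c_{m-1}^2=0$, a contradiction. Hence each $Q_i$ is a constant $\alpha_i\ne 0$, and one more use of $\omega_{2a}=\omega_a^2$ gives $\sum_i\alpha_i(\mu_i^2)^a=\sum_i\alpha_i^2(\mu_i^2)^a$, whence $\alpha_i=\alpha_i^2$ and so $\alpha_i=1$. Thus $\omega_a=\sum_{i=1}^s\mu_i^{\,a}$ for all $a\ge M$, with the $\mu_i$ distinct.

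Finally, to reach every $a\ge1$: if $1\le a<M$ while $2a\ge M$, then $\omega_a^2=\omega_{2a}=\sum_i\mu_i^{2a}=\bigl(\sum_i\mu_i^{\,a}\bigr)^2$, so injectivity of Frobenius gives $\omega_a=\sum_i\mu_i^{\,a}$; iterating (replacing $M$ by $\lceil M/2\rceil$ and repeating finitely often) yields $\omega_a=\sum_{i=1}^s\mu_i^{\,a}$ for all $a\ge1$. Taking $d=s$ and $u_i=\mu_i$ completes the argument, the case $s=0$ meaning the sequence vanishes identically for $a\ge 1$.
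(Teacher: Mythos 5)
Your proposal is correct and follows the same overall strategy as the paper's proof: decompose $\omega_a$ for large $a$ according to the distinct roots of the characteristic polynomial, use the Frobenius together with the functional equation $\omega_{2a}=\omega_a^2$ and the injectivity of squaring to force the coefficient functions to be constants in $\{0,1\}$, and then bootstrap down to all $a\ge 1$ by taking unique $2^k$-th roots. Where you differ is in the treatment of repeated roots, and your treatment is actually the more careful one. The paper writes the solution as $\omega_a=\sum_i h_i(a)v_i^a$ with $h_i$ polynomials and observes that $h_i(2a)=h_i(0)$ in characteristic $2$, so that only the constant terms survive along the even subsequence; but in characteristic $2$ the functions $a\mapsto \overline{a}^k v^a$ span only a $2$-dimensional space for each root $v$, so this polynomial form is not the general solution of a recursion whose characteristic polynomial has a root of multiplicity greater than $2$, and moreover with the correct basis $\binom{a}{k}v^a$ the terms $\binom{2a}{k}$ for $k$ even and positive do not collapse to constants. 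Your argument in the binomial basis --- using Lucas to see that $Q(2a)$ lives in the span of $\binom{a}{j}$ with $2j\le m-1$ while $Q(a)^2=\sum_k c_k^2\binom{a}{k}$ retains the top term, forcing $c_{m-1}=0$ unless $m=1$ --- closes exactly this gap and shows honestly that each coefficient function is a nonzero constant, after which the two proofs coincide. In short: same route, but your version of the key "coefficients are constants" step is a genuine repair of an imprecision in the published argument.
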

}

\begin{proof}  Our assumptions include $\omega_0 = \omega_0^2$.  Thus $\omega_0 \in  \{0, 1\}$.
Let $v_1, \dots, v_m$  be the distinct roots of the characteristic polynomial of the linear recursion relation satisfied by $\Omega$.    Then there exist polynomials $h_1, \dots, h_m$ such that
$\omega_a = \sum_{i = 1} ^m  h_i(a) v_i^a $  for $a \ge N$.    Let $\alpha_i$ be the constant term of 
$h_i$ for each $i$.   Since $\char(F) = 2$,  we have $h_i(2a) = \alpha_i$  for all $a$.  
For $a \ge N$,  
\begin{equation} \label{equation: linear recurrance in characteristic two 1}
 \sum_i  \alpha_i v_i^{4a} = \omega_{4 a}   =  \omega_{2a}^2 = \sum_i \alpha_i^2 v_i^{4a}.
\end{equation}
Because the characteristic of $F$ is $2$,  each element has a unique  $2^k$--th root for all $k \ge 1$;  in particular all the $v_i^4$ are distinct, so Equation (\ref{equation: linear recurrance in characteristic two 1}) implies that $\alpha_i^2 = \alpha_i$ for all $i$, i.e.\  $\alpha_i \in \{0, 1\}$.    Let $u_1, \dots, u_d$ be the list of those $v_j$ such that $\alpha_j = 1$.  Then we have $\omega_{2a} = \sum_i u_i^{2a}$ for
$a \ge N$.   For an arbitrary $a \ge 1$,  chose $k$ such that $2^{k-1} a \ge N$.  Then
$\omega_a$ is the unique $2^k$--th root of $\omega_{2^k a} = \sum_i u_i^{2^k a}$, namely
$\omega_a = \sum_i u_i^a$.  
\end{proof}

\noindent{\em Conclusion of the proof of Theorem \ref{theorem:  ARM theorem on deg. affine algebras with rational coefficient sequence}.}  Let us prove 
\refc{armthm item: rational function} $\implies$ \refc{armthm item: admissible} 
 when the characteristic of the field is $2$.   Since the ground ring is a field and $e_1 \ne 0$,  we have $\omega_{2a} = \omega_a^2$ for
$a \ge 0$, by Lemma \ref{lemma:  ARM weak admissibility conditions}.   Hence, by Lemma \ref{lemma:  Kevin Buzzard lemma},   there exist $u_1, \dots, u_d \in F$  such that $$\omega_a = p_a(u_1, \dots, u_d) = p_a(u_1, \dots, u_d, 0)$$ for $a \ge 1$  and $\omega_0 \in \{0, 1\}$.    Using Example 
\ref{example:  evaluation of eta's when characteristic is 2} and Definition \ref{definition: u admissibility for degenerate case},   $\Omega$ is either $(u_1, \dots, u_d)$--admissible or $(u_1, \dots, u_d, 0)$--admissible, so by Theorem \ref{theorem:  equivalent admissibility conditions for degenerate algebras}, on equivalent conditions for admissibility, condition \refc{armthm item: admissible}  holds.  \qed
 
 \begin{corollary}[Rui and Si \cite{rui-si-degenerate}]   Assume $\char(F) \ne 2$.  
 The conditions of  Theorem \ref{theorem:  ARM theorem on deg. affine algebras with rational coefficient sequence} are equivalent to the existence of a simple finite dimensional module on which $e_1$ is non--zero, as long as \ $\Omega$ is not the zero sequence or
 $n \ne 2$.  
 \end{corollary}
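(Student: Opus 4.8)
The forward implication is immediate and I would dispatch it first: a simple finite-dimensional module on which $e_1$ acts non-trivially is in particular a finite-dimensional module on which $e_1$ acts non-trivially, which is condition \refc{armthm item: fd repns} of Theorem \ref{theorem:  ARM theorem on deg. affine algebras with rational coefficient sequence}; hence all of its equivalent conditions hold. So the real content is the reverse implication, and here is the plan.

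Assume the conditions of Theorem \ref{theorem:  ARM theorem on deg. affine algebras with rational coefficient sequence}; by condition \refc{armthm item: fd repns} there is a finite-dimensional $\affdegbmw{n}$--module $M$ with $e_1 M \ne 0$. The first step is to replace $\affdegbmw{n}$ by its image $\bar A$ in $\mathrm{End}_F(M)$: this is a finite-dimensional $F$--algebra in which all the defining relations of $\affdegbmw{n}$ hold, $e_1 \ne 0$, and $M$ is a faithful module of finite length. Put $J = \bar A\, e_1\, \bar A$, a non-zero two-sided ideal, and note that $J M = \bar A\, e_1\, M \ne 0$. The key step is to show that $J$ is idempotent, $J^2 = J$. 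Granting this, a composition series argument finishes the proof: along a composition series $0 = M_0 \subset M_1 \subset \cdots \subset M_k = M$, if $J$ annihilated every factor $M_j / M_{j-1}$ then $J M_j \subseteq M_{j-1}$ for all $j$, whence $J^k M = 0$; but $J^k = J$ and $J M \ne 0$, a contradiction. Therefore some composition factor $L = M_j/M_{j-1}$ has $J L = \bar A\, e_1\, L \ne 0$, i.e.\ $e_1 L \ne 0$; pulled back along the surjection $\affdegbmw{n} \to \bar A$, this $L$ is the desired simple finite-dimensional $\affdegbmw{n}$--module.

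For the key step it suffices to prove $e_1 \in J^2$, since then $J = \bar A\, e_1\, \bar A \subseteq \bar A\, J^2\, \bar A = J^2 \subseteq J$. This is precisely where the hypothesis ``$\Omega$ is not the zero sequence or $n \ne 2$'' enters, and it splits into exactly two cases. If $n \ge 3$, the tangle relation $e_1 e_2 e_1 = e_1$ exhibits $e_1 = (e_1 e_2)(e_1)$ as a product of two elements of $J$. If $n = 2$, the assumption $\Omega \ne 0$ provides an index $a \ge 0$ with $\omega_a \ne 0$, and then the idempotent and compression relations ($e_1^2 = \omega_0 e_1$, and $e_1 y_1^a e_1 = \omega_a e_1$ for $a > 0$) give $e_1 = \omega_a\inv (e_1 y_1^a)(e_1) \in J^2$. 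The excluded case $n = 2$, $\Omega = 0$ is genuinely exceptional: there $e_1 \bar A e_1 = 0$, so $J^2 = 0$, the ideal $J$ is nilpotent, and no simple module $L$ can have $e_1 L \ne 0$ (otherwise $J L = L$ and hence $L = J^2 L = 0$).

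I expect the only delicate point to be the verification that $J^2 = J$ --- equivalently, recognizing $e_1 \in J^2$ --- which is exactly where the precise form of the hypothesis gets pinned down; everything else is a routine module-theoretic reduction. The argument sketched above does not appear to require $\char(F) \ne 2$; that hypothesis is retained only to match the statement in~\cite{rui-si-degenerate}.
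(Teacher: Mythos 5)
Your proof is correct, but it takes a genuinely different route from the paper. The paper disposes of the nontrivial direction in one line by invoking the results of Rui and Si on cyclotomic Nazarov--Wenzl algebras: the conditions of the theorem yield (via condition \refc{armthm item: admissible}) a cyclotomic quotient with admissible parameters, and Rui--Si's classification shows that such a quotient already has a simple module with $e_1$ acting nontrivially, provided $\Omega \ne 0$ or $n \ne 2$; this is precisely why the hypothesis $\char(F) \ne 2$ appears in the statement, and the paper explicitly remarks that it has not checked whether Rui--Si's results survive in characteristic $2$. You instead start from condition \refc{armthm item: fd repns} and run a self-contained ring-theoretic argument: the ideal $J$ generated by $e_1$ is idempotent (via $e_1 = e_1 e_2 e_1$ when $n \ge 3$, and via $e_1 y_1^a e_1 = \omega_a e_1$ with $\omega_a \ne 0$ when $n = 2$ and $\Omega \ne 0$), so it cannot annihilate every composition factor of a finite-dimensional module it does not annihilate; your case split reproduces exactly the exclusion ``$\Omega = 0$ and $n = 2$,'' and your identification of that case as the one where $J$ is nilpotent is accurate. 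The trade-off: the paper's citation-based proof comes bundled with Rui--Si's finer information (an explicit classification of the simple modules) but inherits their characteristic restriction; your argument proves only the existence statement of the corollary, but is elementary, avoids the detour through admissibility, and --- as you correctly note --- makes no use of $\char(F) \ne 2$, so it actually removes a hypothesis that the paper retains only for the sake of the citation.
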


 \begin{proof}  By the results of~\cite{rui-si-degenerate},  a degenerate cyclotomic BMW algebra
 $\degbmw{n, F, r}(\Omega;  u_1, \dots, u_r)$  with admissible parameters has a simple module on which $e_1$ is non--zero, as long as $\Omega$ is not the zero sequence or $n \ne 2$.  
 (Rui and Si assumed $\char(F) \ne 2$,  and I have not checked whether their results remain valid in characteristic $2$.) 
 \end{proof}
 
 \subsection{Rationality of parameters for affine BMW algebras}  We are going to obtain a result analogous to Theorem \ref{theorem:  ARM theorem on deg. affine algebras with rational coefficient sequence} for the affine BMW algebras.

 \begin{lemma}  \label{lemma: recursion for omegas with negative index and generating function identity}
 Consider an affine BMW algebra
 $\abmw{n, S}$ with parameters $\rho$, $q$, and $\Omega = (\omega_a)_{a \ge 0}$.   
  \begin{enumerate}
  \item \label{lemma: recursion for omegas with negative index and generating function identity item 1}
 There exist elements $\omega_{-a} \in S$  such that
 $e_1 y_1^{-a} e_1 = \omega_{-a} e_1$  for $a \ge 1$.  
 \item  \label{lemma: recursion for omegas with negative index and generating function identity item 2}
 Suppose that  $e_1$ is not a torsion element over $S$.  Then:
\begin{equation} \label{equation:  recursion for the omegas with negative index}
-\omega_a + \omega_{-a} + \rho (q - q\inv) \sum_{i = 1}^a (\omega_{a-i} \omega_{-i} - \omega_{a - 2i}) = 0
\quad \text{for }  a \ge 1.
\end{equation}
\item \label{lemma: recursion for omegas with negative index and generating function identity item 3}
Suppose that $S$ is an integral domain, that $q - q\inv \ne 0$, and that $e_1$ is not a torsion element over $S$.  Then:  
\begin{equation}  \label{equation:  identity for wplus and wminus}
\begin{aligned}
\left [ \sum_{a \ge 0}  \omega_a t^{-a} - \frac{t^2}{t^2-1} + \frac{\rho\inv}{q - q\inv} \right ] &
\left [   \sum_{b \ge 1} \omega_{-b} t^{-b}  - \frac{1}{t^2-1} - \frac{\rho\inv}{q - q\inv} \right ] \\ &=  
   \frac{t^2}{(t^2 -1)^2}  - \frac{1}{(q - q\inv)^2}.  
\end{aligned}
\end{equation} 
\end{enumerate}
 \end{lemma}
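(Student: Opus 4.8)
The plan is to prove the first two assertions simultaneously by induction on $a$, the only real input being an explicit evaluation of $e_1 y_1^{m} g_1 y_1^{k} e_1$, and then to deduce the third from the second by summing into generating functions.

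\emph{The key identity.} Put $\widehat y = g_1 y_1 g_1$. The affine braid relation $y_1 g_1 y_1 g_1 = g_1 y_1 g_1 y_1$ says $y_1 \widehat y = \widehat y\, y_1$, and the unwrapping relation $e_1 y_1 g_1 y_1 g_1 = e_1$, multiplied on the right by $g_1\inv$ (using $e_1 g_1\inv = \rho e_1$) and then by $y_1\inv$, gives $e_1 y_1 g_1 = \rho\, e_1 y_1\inv$, whence $e_1 \widehat y = \rho\inv e_1 y_1 g_1 = e_1 y_1\inv$. Writing $g_1 y_1 = \widehat y\, g_1\inv$, commuting $\widehat y$ to the left past $y_1^{m}$, and absorbing it into $e_1$, one obtains the \emph{swap identity}
\[
e_1 y_1^{m} g_1 y_1^{k} e_1 \;=\; e_1 y_1^{m-1} g_1\inv y_1^{k-1} e_1 \qquad (m,k \in \Z),
\]
valid in any affine BMW algebra. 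Substituting the Kauffman skein relation $g_1\inv = g_1 - (q-q\inv)(1-e_1)$ on the right and using $e_i^2 = \omega_0 e_i$ gives the recursion
\[
e_1 y_1^{m} g_1 y_1^{k} e_1 \;=\; e_1 y_1^{m-1} g_1 y_1^{k-1} e_1 \;-\;(q-q\inv)\bigl(e_1 y_1^{m+k-2} e_1 - e_1 y_1^{m-1} e_1 y_1^{k-1} e_1\bigr).
\]
Finally, by the swap identity $e_1 y_1 g_1 y_1^{a+1} e_1 = e_1 g_1\inv y_1^{a} e_1 = \rho\, e_1 y_1^{a} e_1 = \rho\,\omega_{a} e_1$ for $a \ge 0$ (the last equality by the compression relations), and likewise $e_1 y_1^{a+1} g_1 y_1 e_1 = \rho\,\omega_{a} e_1$.

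\emph{The first two assertions.} Assume $e_1$ is not torsion over $S$, so that a factor lying in $S e_1$ determines a scalar; by the compression relations $e_1 y_1^{j} e_1 = \omega_{j} e_1$ for $j \ge 0$. We induct on $a \ge 1$, assuming $e_1 y_1^{-j} e_1 = \omega_{-j} e_1$ for $1 \le j < a$. Telescoping the recursion above $a$ times, from $e_1 y_1 g_1 y_1^{a+1} e_1$ down to $e_1 y_1^{1-a} g_1 y_1 e_1$, yields
\[
e_1 y_1^{1-a} g_1 y_1 e_1 \;=\; \Bigl(\rho\,\omega_{a} \;-\;(q-q\inv)\sum_{\ell=1}^{a}\bigl(\omega_{\ell-a}\,\omega_{\ell} - \omega_{2\ell-a}\bigr)\Bigr) e_1 ;
\]
every $\omega$ appearing on the right has index $\ge 0$ or of absolute value $< a$, so the right side is a definite element of $S e_1$ by the inductive hypothesis. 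On the other hand, the swap identity gives $e_1 y_1^{1-a} g_1 y_1 e_1 = e_1 y_1^{-a} g_1\inv e_1 = \rho\, e_1 y_1^{-a} e_1$; hence $e_1 y_1^{-a} e_1 \in S e_1$, which proves the first assertion and defines $\omega_{-a}$. Re-indexing the sum by $i = a - \ell$ to get $\sum_{i=1}^{a}$ (the boundary terms being absorbed by the ground-ring relation $(q-q\inv)(\omega_0 - 1) = \rho - \rho\inv$) turns the displayed equation into $-\omega_{a} + \omega_{-a} + \rho(q-q\inv)\sum_{i=1}^{a}\bigl(\omega_{a-i}\,\omega_{-i} - \omega_{a-2i}\bigr) = 0$, which is the second assertion.

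\emph{The generating-function identity.} Assume in addition that $S$ is an integral domain with $q - q\inv \ne 0$, and work over its field of fractions, so $q-q\inv$ is invertible. Set $W_{+} = \sum_{a \ge 0}\omega_{a} t^{-a}$ and $W_{-} = \sum_{b \ge 1}\omega_{-b} t^{-b}$, formal power series in $t\inv$. Multiply the recursion of the second assertion by $t^{-a}$ and sum over $a \ge 1$: reindexing the two double sums gives $\sum_{a \ge 1}\bigl(\sum_{i}\omega_{a-i}\omega_{-i}\bigr) t^{-a} = W_{+} W_{-}$ and $\sum_{a \ge 1}\bigl(\sum_{i}\omega_{a-2i}\bigr) t^{-a} = \frac{1}{t^2-1}W_{+} + \frac{t^2}{t^2-1}W_{-}$, while the linear part contributes $-(W_{+} - \omega_0) + W_{-}$. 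Dividing by $\rho(q-q\inv)$ and completing the square in $W_{\pm}$ puts the identity in the form
\[
\Bigl(W_{+} - \frac{t^2}{t^2-1} + \frac{\rho\inv}{q-q\inv}\Bigr)\Bigl(W_{-} - \frac{1}{t^2-1} - \frac{\rho\inv}{q-q\inv}\Bigr) \;=\; \frac{t^2}{(t^2-1)^2} \;+\; c,
\]
where $c \in F$ is the constant $\frac{\rho\inv(1-\omega_0)}{q-q\inv} - \frac{\rho^{-2}}{(q-q\inv)^2}$; the ground-ring relation, in the form $\rho\inv(1-\omega_0) = (\rho^{-2}-1)/(q-q\inv)$, gives $c = -1/(q-q\inv)^2$, which is the asserted identity.

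\emph{Main obstacle.} The non-formal step is the swap identity together with the ``wrap-around'' telescoping: a direct attempt to reduce $e_1 y_1^{-a} e_1$ by pushing $g_1$'s through the unwrapping relation only reproduces tautologies, and the recursion emerges solely from running the skein relation between the two closed evaluations $e_1 y_1^{1-a} g_1 y_1 e_1 = \rho\,\omega_{-a} e_1$ and $e_1 y_1 g_1 y_1^{a+1} e_1 = \rho\,\omega_{a} e_1$. The passage to generating functions is then bookkeeping, the points to watch being the reindexing of the $\omega_{a-2i}$-sum (which produces the coefficients $\frac{1}{t^2-1}$ and $\frac{t^2}{t^2-1}$) and the final use of the ground-ring relation to evaluate the constant.
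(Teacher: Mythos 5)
Your proof is correct, and I have checked the computations in detail: the swap identity $e_1 y_1^{m} g_1 y_1^{k} e_1 = e_1 y_1^{m-1} g_1^{-1} y_1^{k-1} e_1$ does follow from the unwrapping and affine braid relations; the telescoping from $(m,k)=(1,a+1)$ down to $(1-a,1)$ gives $\rho\,\omega_{-a} = \rho\,\omega_a - (q-q^{-1})\sum_{\ell=1}^a(\omega_{\ell-a}\omega_\ell-\omega_{2\ell-a})$; and the shift of summation index together with $(q-q^{-1})(\omega_0-1)=\rho-\rho^{-1}$ collapses the coefficients of $\omega_{\pm a}$ to $\rho^{-1}$ and yields exactly the recursion of part (2). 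The generating-function bookkeeping and the evaluation of the constant term via the ground-ring relation are also correct. Note that the paper does not actually prove this lemma: parts (1) and (2) are quoted from Goodman--Hauschild and Rui--Xu, and for part (3) the paper only remarks that the product identity is equivalent to the recursion by comparing coefficients of $t^{-n}$. Your argument is therefore a genuinely self-contained derivation, in the spirit of the cited sources, and it is the right one. Two small points. First, part (1) of the lemma carries no torsion hypothesis, and your induction does not need one: the identity $\rho\, e_1 y_1^{-a} e_1 = \bigl(\rho\,\omega_a - (q-q^{-1})\sum_{\ell}(\cdots)\bigr)e_1$ already exhibits $e_1 y_1^{-a}e_1$ as an element of $S e_1$ because $\rho$ is invertible; torsion-freeness is needed only to read off the scalar identity of part (2), so you should not impose it at the head of the whole induction. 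Second, in part (3) the fractions $1/(t^2-1)$ and $t^2/(t^2-1)$ must be read as power series in $t^{-1}$ for your interchange of summations to produce them; that is the intended reading, but it is worth saying explicitly.
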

 
 \begin{proof}  Statement 
 \refc{lemma: recursion for omegas with negative index and generating function identity item 1}
  is from~\cite{GH1}, Corollary 3.13.   Statement  
  \refc{lemma: recursion for omegas with negative index and generating function identity item 2} is proved in ~\cite{rui-2008}, Lemma 2.17 and  (in a different but equivalent form) in ~\cite{GH1}, Corollary 3.13 and ~\cite{GH2}, Lemma 2.6.
  The equation (\ref{equation:  identity for wplus and wminus}) appears as (2.30) in 
  ~\cite{rui-2008}.
   If $S$ is integral and  $q - q\inv \ne 0$,  then (\ref{equation:  recursion for the omegas with negative index})  is equivalent to
 (\ref{equation:  identity for wplus and wminus}).  To see this, expand the left side of 
  (\ref{equation:  identity for wplus and wminus}) and isolate the coefficient of $t^{-n}$ for each $n \ge 0$.    
  
  \end{proof}
 
\begin{remark}  The equivalence of (\ref{equation:  recursion for the omegas with negative index})  and   (\ref{equation:  identity for wplus and wminus})  seems to be implicit in ~\cite{rui-2008}.    The left side of  (\ref{equation:  identity for wplus and wminus})   can also be written as:
$$
\left [ \sum_{a \ge 0}  \omega_a t^{-a} - \frac{t^2}{t^2-1} + \frac{\rho\inv}{q - q\inv} \right ] 
\left [   \sum_{b \ge 0} \omega_{-b} t^{-b}  - \frac{t^2}{t^2-1} - \frac{\rho}{q - q\inv} \right ].
$$
Ram et. al. ~\cite{Ram-Daugherty-Virk} have given an interesting non-inductive direct proof of (\ref{equation:  identity for wplus and wminus}).

 \end{remark}
 
 \begin{lemma}  \label{lemma:  periodicity of the omegas in the affine BMW}
 Consider an cyclotomic  BMW algebra
 $\bmw{n, S, r}$ with parameters $\rho$, $q$, and $\Omega = (\omega_a)_{a \ge 0}$   and
 $u_1, \dots, u_r$.   Let $a_i$ be given by equation (\ref{equation:  relate a's to u's}).   If $e_1$ is not a torsion element over $S$, then  
 $
 \sum_{j = 0}^r  a_j  \omega_{j + \ell}  = 0$  for all $\ell  \in \Z$.
 \end{lemma}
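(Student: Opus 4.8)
The plan is to reduce to Lemma \ref{lemma: periodicity of omegas when e1 is torsion free}, which already yields the relation for $\ell \ge 0$, and to cover the remaining negative shifts by exploiting a feature absent in the degenerate world: in the non-degenerate setting $y_1$ is an \emph{invertible} element of $\bmw{n, S, r}$.

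First I would record that $e_1 y_1^m e_1 = \omega_m e_1$ holds for every integer $m$. For $m \ge 1$ this is the compression relation, for $m = 0$ it is the idempotent relation, and for $m \le -1$ it is Lemma \ref{lemma: recursion for omegas with negative index and generating function identity}\refc{lemma: recursion for omegas with negative index and generating function identity item 1}, which furnishes the elements $\omega_{-a} \in S$ with $e_1 y_1^{-a} e_1 = \omega_{-a} e_1$. All of these are identities already in $\abmw{n, S}$ and hence hold in the cyclotomic quotient $\bmw{n, S, r}$. Next, for a fixed $\ell \in \Z$, I would start from the cyclotomic relation $\sum_{j=0}^r a_j y_1^j = 0$ and multiply by $y_1^\ell$ --- legitimate precisely because $y_1$ is invertible --- to obtain $\sum_{j=0}^r a_j y_1^{j+\ell} = 0$. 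Multiplying this by $e_1$ on both sides and substituting the compression identities gives
\begin{equation*}
\Big( \sum_{j=0}^r a_j\, \omega_{j+\ell} \Big) e_1 \;=\; \sum_{j=0}^r a_j\, e_1 y_1^{j+\ell} e_1 \;=\; 0 .
\end{equation*}
Since $e_1$ is not a torsion element over $S$, the coefficient $\sum_{j=0}^r a_j \omega_{j+\ell}$ must vanish, which is the assertion.

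I do not foresee a real obstacle. The one point that deserves care is that the identity $e_1 y_1^m e_1 = \omega_m e_1$ for negative $m$ is not among the defining relations but is provided by Lemma \ref{lemma: recursion for omegas with negative index and generating function identity}\refc{lemma: recursion for omegas with negative index and generating function identity item 1}; one should make explicit that the resulting $\omega_{-a}$ are bona fide elements of $S$ (so that $\sum_{j} a_j \omega_{j+\ell}$ is meaningful for every $\ell$) and that, being relations in $\abmw{n, S}$, they pass to the cyclotomic quotient.
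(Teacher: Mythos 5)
Your proposal is correct and follows essentially the same route as the paper, which simply says the proof is the same as that of Lemma \ref{lemma: periodicity of omegas when e1 is torsion free}: multiply the cyclotomic relation by $y_1^\ell$ (using invertibility of $y_1$ for negative $\ell$), sandwich with $e_1$, apply the compression identities, and cancel the non-torsion element $e_1$. Your explicit appeal to Lemma \ref{lemma: recursion for omegas with negative index and generating function identity}\refc{lemma: recursion for omegas with negative index and generating function identity item 1} for the negative-index identities $e_1 y_1^{-a} e_1 = \omega_{-a} e_1$ is exactly the point the paper leaves implicit.
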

 
 \begin{proof}  Same as the proof of Lemma \ref{lemma: periodicity of omegas when e1 is torsion free}. 
 \end{proof}
 
  \begin{lemma}  \label{lemma:  recursion relation in affine bmw and generating functions}
  Consider an affine BMW algebra
 $\abmw{n, F}$ with parameters $\rho$, $q$, and $\Omega = (\omega_a)_{a \ge 0}$ over a field $F$.     Suppose that there exist  $r>0$  and $a_0, a_1, \dots, a_{r-1} \in S$ such that $
\omega_{a + r} +  \sum_{j = 0}^{r-1}  a_j  \omega_{j + a}  = 0$  for all $a  \in \Z$.  Then  $w^+(t) = \sum_{a \ge 0}  \omega_a t^{-a} $  and $w^-(t) = \sum_{b \ge 1}  \omega_{-b} t^{-b} $ are rational functions in $F(t)$  and 
 $w^-(t) = - w^+(t\inv)$.  Moreover, $w^+(0) = 0$  and $w^+(\infty) = \omega_0$.  
 \end{lemma}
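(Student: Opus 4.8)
The plan is to regard $w^+(t)$ and $w^-(t)$ as elements of the ring $F[[t\inv]]$ of formal power series in $t\inv$ and to clear denominators against the characteristic polynomial of the recursion. Put $p(t)=\sum_{j=0}^r a_j t^j$ with $a_r=1$, and let $\tilde p(t)=t^r p(t\inv)=\sum_{j=0}^r a_{r-j}t^j$ be the reversed polynomial. First I would note that we may assume $a_0\ne 0$: if $a_0 = 0$ the hypothesized two-sided recursion can be shortened by one step (replace the index $\ell$ by $\ell-1$ to drop the $a_0$-term), and if this shortening ever reaches $r=1$ with $a_0=0$, then $\Omega$ is the zero sequence and every assertion of the lemma is trivial.

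Next I would multiply $p(t)$ against $w^+(t)=\sum_{a\ge 0}\omega_a t^{-a}$. For $m\ge 0$ the coefficient of $t^{-m}$ in $p(t)w^+(t)$ is $\sum_{j=0}^r a_j\omega_{j+m}$, which vanishes by the recursion evaluated at $\ell=m$. Hence $p(t)w^+(t)=f^+(t)$, where $f^+(t)=\sum_{k=1}^r\bigl(\sum_{j=k}^r a_j\omega_{j-k}\bigr)t^k$ is a polynomial supported in degrees $1,\dots,r$ whose $t^r$-coefficient is $\omega_0$. Thus $w^+=f^+/p$ is a rational function; since $f^+(0)=0$ and $p(0)=a_0\ne 0$ we get $w^+(0)=0$, and reading off the constant term of the series $w^+(t)=\omega_0+\omega_1 t\inv+\cdots$ (equivalently, comparing leading terms of $f^+$ and $p$) gives $w^+(\infty)=\omega_0$.

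Then I would repeat this for $w^-(t)=\sum_{b\ge 1}\omega_{-b}t^{-b}$, but multiplying by $\tilde p$ rather than $p$. For $m\ge 1$ the coefficient of $t^{-m}$ in $\tilde p(t)w^-(t)$ equals, after re-indexing $j=r-i$, the sum $\sum_{j=0}^r a_j\omega_{j-r-m}$, i.e.\ the recursion at $\ell=-r-m$, hence $0$. So $\tilde p(t)w^-(t)=\tilde f^-(t)$ for a polynomial $\tilde f^-$ of degree $\le r-1$, and $w^-=\tilde f^-/\tilde p$ is rational. To identify $w^-$ with $-w^+(t\inv)$, observe that from $w^+=f^+/p$ one has $w^+(t\inv)=t^r f^+(t\inv)/\bigl(t^r p(t\inv)\bigr)=\tilde f^+(t)/\tilde p(t)$, where $\tilde f^+(t):=t^r f^+(t\inv)$ is a polynomial of degree $\le r-1$ (using that $f^+$ is supported in degrees $1,\dots,r$). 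It then suffices to prove the polynomial identity $\tilde f^-(t)=-\tilde f^+(t)$; matching the coefficient of $t^m$ for $0\le m\le r-1$ on each side, this collapses to $\sum_{j=0}^r a_j\omega_{j-r+m}=0$, which is precisely the recursion at index $m-r$ and therefore holds.

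I do not expect any serious obstacle: the whole argument reduces to the single two-sided recursion, invoked at a nonnegative shift for $w^+$, at a large negative shift for $w^-$, and at an intermediate shift for the identity $\tilde f^-=-\tilde f^+$. The one point deserving a remark is the normalization $w^+(0)=0$, which is what forces the preliminary reduction to $a_0\ne 0$; the remainder is routine index bookkeeping with the two polynomials $p$ and $\tilde p$.
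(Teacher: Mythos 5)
Your proof is correct and takes essentially the same route as the paper's: multiply $w^{+}$ by the characteristic polynomial $p$ and $w^{-}$ by its reversal (the paper equivalently computes $p(t)\,w^-(t\inv)$), use the two-sided recursion to see that the products are polynomials, and compare coefficients to obtain $w^-(t)=-w^+(t\inv)$ together with the values at $0$ and $\infty$. Your preliminary reduction to $a_0\ne 0$ addresses a detail the paper's one-line argument glosses over (its conclusion $w^+(0)=q_1(0)/p(0)=0$ tacitly requires $p(0)\ne 0$), so that extra care is a refinement rather than a different approach.
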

 
 \begin{proof}  Let $p(t) = t^r + \sum_{j = 0}^{r-1}  a_j t^j$.   Then one computes, using the recursion on
 $(\omega_a)_{a \in \Z}$,  that
 $p(t) w^+(t) = q_1(t)$,  where $q_1$ is an explicit polynomial of degree $\le r$.  Similarly,
 $p(t) w^-(t\inv) = q_2(t)$.  Using the recursion again, one sees that $q_1 = - q_2$.    The coefficient of 
 $t^r$ in $q_1(t)$ is $\omega_0$ and the constant term is zero;  this gives $w^+(0) = 0$  and $w^+(\infty) = \omega_0$. 
 
 \end{proof}

 \begin{theorem}  \label{theorem:  theorem on  affine bmw algebras with rational coefficient sequence}
  Consider an  affine BMW algebra $\affbmw{n}$ over an algebraically closed field $F$, with parameters  $\rho$, $q$, and  $\Omega = (\omega_a)_{a \ge 0}$. 
 Suppose that $e_1 \ne 0$ in $\affbmw{n}.$  Consider the following statements:
  \begin{enumerate}
   \item  \label{thm item: w+ equals w- at t inverse}
   $w^+(t) = \sum_{a \ge 0}  \omega_a t^{-a} $  and $w^-(t) = \sum_{b \ge 1}  \omega_{-b} t^{-b} $ are rational functions in $F(t)$  and 
 $w^-(t) = - w^+(t\inv)$.    Moreover, $w^+(t)$ does not have a pole at $0$ or at $\infty$.  
 \item   \label{thm item: linear recurrence over all of Z}
 There exist $r>0$  and $a_0, a_1, \dots, a_{r-1} \in F$  such that 
$\omega_{r + \ell} +    \sum_{j = 0}^{r-1}  a_j  \omega_{j + \ell}  = 0,$ for all  $\ell \in \Z$.
 \item   \label{thm item: all admissible parameters}
 There exist $r >0$  and $u_1, \dots, u_r \in F$  such that  the parameters  $\rho$, $q$,  
 $\Omega$, and  $u_1, \dots, u_r$ are admissible.   
   \item   \label{thm item: allow fd repn with e1 not zero}
   $\affbmw{n}$  admits a finite dimensional module on which $e_1$ is non--zero.
  \end{enumerate}
  The following implications hold:    
  $$
  \refc{thm item: w+ equals w- at t inverse}  \Longleftarrow \refc {thm item: linear recurrence over all of Z} \Longleftarrow \refc{thm item: all admissible parameters} \iff \refc{thm item: allow fd repn with e1 not zero}.
  $$ 
  If $q - q\inv \ne 0$,  then all the conditions  are equivalent.  
  \end{theorem}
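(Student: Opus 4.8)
The plan is to establish the cycle of implications in the stated order, drawing on the lemmas assembled immediately above the theorem. The implications $\refc{thm item: linear recurrence over all of Z} \Longrightarrow \refc{thm item: w+ equals w- at t inverse}$ and $\refc{thm item: all admissible parameters} \Longrightarrow \refc{thm item: linear recurrence over all of Z}$ are essentially done already: the first is exactly Lemma \ref{lemma:  recursion relation in affine bmw and generating functions} (note that $e_1 \ne 0$ over the field $F$ means $e_1$ is not a torsion element, so the hypotheses of that lemma and of Lemma \ref{lemma:  periodicity of the omegas in the affine BMW} are met), and the second follows from Lemma \ref{lemma:  periodicity of the omegas in the affine BMW} applied to the cyclotomic quotient $\bmw{n, F, r}$ determined by admissible parameters $\rho, q, \Omega, u_1, \dots, u_r$, since admissibility forces $e_1 \ne 0$ there. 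For $\refc{thm item: all admissible parameters} \iff \refc{thm item: allow fd repn with e1 not zero}$ I would invoke Proposition \ref{proposition:  existence of fd module with e1 not zero} directly: condition \refc{proposition:  existence of fd module with e1 not zero item 1} there is precisely \refc{thm item: all admissible parameters}, and \refc{proposition:  existence of fd module with e1 not zero item 2} is \refc{thm item: allow fd repn with e1 not zero}.

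The substantive content is the reverse implication $\refc{thm item: w+ equals w- at t inverse} \Longrightarrow \refc{thm item: all admissible parameters}$ (equivalently, closing the loop), which I would only claim under the hypothesis $q - q\inv \ne 0$. Here the strategy mirrors the degenerate case but using the Rui--Xu generating-function formulation of admissibility (Definition \ref{definition: RX admissibility} and Theorem \ref{theorem: equivalence of admissibility and u-admissibility in the non-degenerate case}). Starting from rationality of $w^+(t)$, one first produces a linear recursion: a rational function with no pole at $0$ or $\infty$ and satisfying $w^-(t) = -w^+(t\inv)$ can be written as $w^+(t) = N(t)/D(t)$ with $\deg N \le \deg D = r$ and $D$ a polynomial whose reciprocal-symmetry is forced by the functional equation; the denominator then supplies the coefficients $a_0, \dots, a_{r-1}$ of a recursion valid for all $\ell \in \Z$, which is condition \refc{thm item: linear recurrence over all of Z}. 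Next, the roots of $D$ — say their inverses — become the candidate parameters $u_1, \dots, u_r$; one checks $u_i \ne 0$ because $w^+$ has no pole at $\infty$ forces $D(0) \ne 0$. The remaining and main task is to verify that the specific shape of $Z(t)$ in equations (\ref{equation: formula for Z of t}) and (\ref{equation: formula for Z of t 2}) is automatically matched: this is where Lemma \ref{lemma: recursion for omegas with negative index and generating function identity}, especially the quadratic identity (\ref{equation:  identity for wplus and wminus}) relating $w^+$ and $w^-$, does the heavy lifting. The idea is that the functional equation $w^-(t) = -w^+(t\inv)$ together with (\ref{equation:  identity for wplus and wminus}) pins down $w^+(t)$ to be of the form forced by $(q-q\inv)w^+(t) = Z(t; u_1,\dots,u_r,\rho,q)$ for a suitable choice of $\rho$ among the two (or two sign) options in (\ref{equation: conditions on rho in u-admissibility}), and equation (\ref{equation:  basic relation in ground ring}) then forces consistency. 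Once $(q-q\inv)\sum \omega_a t^{-a} = Z(t)$ holds, $\Omega$ is $(u_1,\dots,u_r)$--admissible by definition, and Theorem \ref{theorem: equivalence of admissibility and u-admissibility in the non-degenerate case} converts this to admissibility, giving \refc{thm item: all admissible parameters}.

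I expect the main obstacle to be precisely this last verification: extracting from the abstract rationality hypothesis the rigid Rui--Xu normal form for $Z(t)$, including the correct determination of $\rho$. The subtlety is that $w^+$ alone does not determine $\rho$; one must genuinely use the pair $(w^+, w^-)$, the functional equation between them coming from Lemma \ref{lemma:  recursion relation in affine bmw and generating functions}, and the quadratic constraint (\ref{equation:  identity for wplus and wminus}) coming from the unwrapping relation in the affine BMW algebra. There may be a case analysis according to the parity of $r$ (matching the two cases of $A(t)$ in (\ref{equation: formula for Z of t 2})) and according to which root of the $\rho$-equation (\ref{equation: conditions on rho in u-admissibility}) is selected; I would handle this by observing that changing $\rho$ to the other allowed value corresponds to a transformation of the list $u_1, \dots, u_r$ (e.g. replacing some $u_i$ by $u_i\inv$ or adjoining/removing a trivial factor), so that \emph{some} admissible parameter set is always obtained. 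I would also note that the case $q - q\inv = 0$ is left open by the theorem statement — the Rui--Xu machinery is unavailable there and the Wilcox--Yu conditions of \cite{Wilcox-Yu3} would need to be used instead — so no argument is required for that case, only the one-directional implications already recorded, which hold without restriction on $q$.
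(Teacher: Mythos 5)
Your proposal is correct and takes essentially the same route as the paper: the three easy implications are dispatched by exactly the lemmas you cite, and the substantive implication $(1)\implies(3)$ for $q-q\inv\ne 0$ is proved by substituting $w^-(t)=-w^+(t\inv)$ into the quadratic identity relating $w^+$ and $w^-$ and forcing $w^+$ into the Rui--Xu normal form, just as you outline. The step you flag as the main obstacle is carried out in the paper by writing $w^+(t)+h(t)=B(t)R(t)$, where $-B(t)B(t\inv)$ equals the right-hand side of the quadratic identity, so that the constraint becomes $R(t)R(t\inv)=1$ and (together with the absence of poles at $0$ and $\infty$) forces $R(t)=\pm\prod_j (tu_j-1)/(t-u_j)$; the value $\rho=(-1)^\alpha\prod_j u_j$ then falls out of $w^+(\infty)=\omega_0$ and the basic parameter relation, and the four parity cases are resolved by adjoining $\pm1$ to the list of $u_j$'s --- exactly the adjustment of the parameter list you anticipate.
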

  
  \begin{proof}   The implication $\refc{thm item: w+ equals w- at t inverse}  \Longleftarrow \refc {thm item: linear recurrence over all of Z}$ is from Lemma \ref{lemma:  recursion relation in affine bmw and generating functions}, and 
  $\refc {thm item: linear recurrence over all of Z} \Longleftarrow \refc{thm item: all admissible parameters}$  from Lemma \ref{lemma:  periodicity of the omegas in the affine BMW}.  The equivalence 
$ \refc{thm item: all admissible parameters} \iff \refc{thm item: allow fd repn with e1 not zero}$ comes from Proposition \ref{proposition:  existence of fd module with e1 not zero}.   
  
It remains to prove $\refc{thm item: w+ equals w- at t inverse} \implies \refc{thm item: all admissible parameters}$ if $q - q\inv \ne 0$.  
Assume \refc{thm item: w+ equals w- at t inverse}.  Since the ground ring is a field and $e_1$ is assumed to be non--zero,  (\ref{equation:  identity for wplus and wminus}) holds.   But by assumption, we have that $w^+(t) = \sum_{a \ge 0} \omega_a t^{-a}$  and $w^-(t) = \sum_{b \ge 1} \omega_{-b} t^{-b}$ are rational functions, and
$w^-(t) = -w^+(t\inv)$.    Substituting in (\ref{equation:  identity for wplus and wminus}),  and writing
$$
h(t) = -\frac{t^2}{t^2 - 1}  + \frac{\rho\inv}{q - q\inv}, 
$$

we have
\begin{equation} \label{equation:  constraint equation on w+}
- \left [ w^+(t) + h(t) \right ] \left [ w^+(t\inv ) + h(t\inv ) \right ]
=   \frac{t^2}{(t^2 -1)^2}  - (q - q\inv)^{-2}  .
\end{equation}
Define $$B(t) = (q-q\inv)\inv +  \frac{t}{t^2 -1} =  \frac{(t+q)(t - q\inv)}{(q - q\inv)(t^2 - 1)}.$$  
Note that 
\begin{equation} \label{equation: B of t equation}
-B(t) B(t\inv) =  \frac{t^2}{(t^2 -1)^2}  - (q - q\inv)^{-2}  .
\end{equation}
We can write $w^+(t)$ in the form
$$
w^+(t) = - {h(t)} + B(t) A_0 t^m \frac{\prod_{\ell = 1}^s (t u_\ell - 1)}{\prod_{j = 1}^r (t - v_j)}, 
$$
where $m \in \Z$, $A_0 \in F$,  no $u_\ell$ or $v_j$ is zero, and $u_\ell \ne v_j\inv$ for all $j, \ell$.  
Then, taking into account equations (\ref{equation:  constraint equation on w+}) and (\ref{equation: B of t equation}) we have
\begin{equation}
1 = A_0^2   \frac{\prod_{\ell = 1}^s (t u_\ell - 1) (t\inv u_\ell - 1)}{\prod_{j = 1}^r (t - v_j)(t\inv - v_j)}  
= A_0^2 (-t)^{r-s}  \frac{\prod_\ell u_\ell}{\prod_j v_j}\frac{\prod_{\ell = 1}^s (t  - u_\ell\inv) (t - u_\ell )}{\prod_{j = 1}^r (t - v_j\inv)(t - v_j)} .
\end{equation}
Considering the restrictions placed on the $u_\ell$ and $v_j$, we must have $r = s$,  $A_0^2 = 1$, and
the multisets $\{u_1, \dots, u_s\}$  and $\{v_1, \dots, v_s\}$ coincide.  Thus
\begin{equation} \label{equation: expression for w+}
w^+(t) = - {h(t)} + (-1)^\alpha  B(t) t^m  \prod_{j = 1}^s  \frac{tu_j - 1}{t - u_j},
\end{equation}
with $\alpha \in \{0, 1\}$  and $m \in \Z$.  Because $w^+$ does not have a pole at $0$ or $\infty$,  we have $m = 0$.   Using the definition of $h(t)$, we have finally
\begin{equation} \label{equation: expression for w+ 2}
w^+(t) = \frac{t^2}{t^2-1} - \rho\inv (q - q\inv)\inv + (-1)^\alpha  B(t)   \prod_{j = 1}^s  \frac{tu_j - 1}{t - u_j}.
\end{equation}
Moreover,  using $w^+(\infty) = \omega_0$, we obtain that
$$
(\omega_0 - 1)(q - q\inv) = -\rho\inv + (-1)^\alpha \textstyle \prod_j u_j,
$$
and  (\ref{equation:  basic relation in ground ring}) implies  that $\rho = (-1)^\alpha \prod_j u_j$.   Now there are four cases to consider, according to the parity of $\alpha$ and of $s$.

{\em Case 1,  $\alpha = 0$ and $s$ is odd.}   Then $\rho = \prod_j u_j$.    Comparing the expression (\ref{equation: expression for w+ 2})  for $w^+(t)$ with the formulas  (\ref{equation: formula for Z of t}) and (\ref{equation: formula for Z of t 2}) and Definition  \ref{definition: RX admissibility}, we see that the parameters $\rho$, $q$, $\Omega$ are
$(u_1, \dots, u_s)$--admissible.

{\em Case 2, $\alpha = 1$ and $s$ is odd.}  Then $\rho = -\prod_j u_j$.  Let $v = (u_1, \dots, u_s, -1, 1)$.
Then  
$$
\begin{aligned}
w^+(t) &= \frac{t^2}{t^2-1} - \rho\inv (q - q\inv)\inv - B(t) \, \left ({\textstyle \prod_{j = 1}^{s} u_j }\right )\prod_{j =1}^s 
\frac{t - u_j\inv}{t- u_j} \\
&= \frac{t^2}{t^2-1} - \rho\inv (q - q\inv)\inv  +  B(t)  \, \left ({\textstyle \prod_{j = 1}^{s+2} v_j }\right )
 \prod_{j =1}^{s+2} 
\frac{t - v_j\inv}{t- v_j},
\end{aligned}
$$
and $\rho = - \prod_{j = 1}^s u_j = \prod_{j = 1}^{s+2} v_j$.  
Again, comparing with the formulas of Section \ref{subsection: admissibility criterion of Rui and Xu}, we see that the parameters $\rho$, $q$, $\Omega$ are  $(u_1, \dots, u_s, -1, 1)$--admissible.  

{\em Case 3, $\alpha = 0$ and $s$ is even.}  Then $\rho = \prod_j u_j$.   
By a similar calculation as in Case 2, one checks that the parameters $\rho$, $q$, $\Omega$ are
$(u_1, \dots, u_s, 1)$--admissible.

{\em Case 4, $\alpha = 1$ and $r$ is even.}  Then $\rho = -\prod_j u_j$.   
By a similar calculation again, one checks that the parameters $\rho$, $q$, $\Omega$ are
$(u_1, \dots, u_s, -1)$--admissible.

In each of the four cases, there exists $r > 0$ and $v_1, \dots, v_r$ such that
$\rho$, $q$, $\Omega$ and $v_1, \dots, v_r$ satisfy the Rui-Xu criterion for admissibility. Thus we have shown $\refc{thm item: w+ equals w- at t inverse} \implies \refc{thm item: all admissible parameters}$ when $q - q\inv \ne 0$.  
  \end{proof}

  \begin{corollary}[Rui and Si~\cite{Rui-Si-Cyclotomic-II}] Assume $q - q\inv \ne 0$.
The conditions   of  Proposition \ref{theorem:  theorem on  affine bmw algebras with rational coefficient sequence}
  are equivalent to the existence of a simple finite dimensional module on which $e_1$ is non--zero, as long as \ $\Omega$ is not the zero sequence or
 $n \ne 2$.   
\end{corollary}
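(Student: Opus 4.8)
The plan is to obtain this as a routine combination of Theorem~\ref{theorem:  theorem on  affine bmw algebras with rational coefficient sequence} with the construction of simple modules for cyclotomic BMW algebras with admissible parameters due to Rui and Si~\cite{Rui-Si-Cyclotomic-II}, mirroring the proof of the analogous statement in the degenerate case given just above.  Throughout we keep the standing hypotheses $q - q\inv \ne 0$ and $e_1 \ne 0$ in $\affbmw{n}$.

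First I would dispose of the easy implication: if $\affbmw{n}$ admits a simple finite dimensional module $M$ with $e_1 M \ne 0$, then $M$ is in particular a finite dimensional module on which $e_1$ is non--zero, so statement~\refc{thm item: allow fd repn with e1 not zero} of Theorem~\ref{theorem:  theorem on  affine bmw algebras with rational coefficient sequence} holds; since $q - q\inv \ne 0$, all four statements of that theorem then hold.  For the reverse direction, assume the conditions of Theorem~\ref{theorem:  theorem on  affine bmw algebras with rational coefficient sequence}.  By statement~\refc{thm item: all admissible parameters} there exist $r > 0$ and $u_1, \dots, u_r \in F$ such that the parameters $\rho$, $q$, $\Omega$, $u_1, \dots, u_r$ are admissible.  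The cyclotomic BMW algebra $\bmw{n, F, r}(\rho, q, \Omega; u_1, \dots, u_r)$ is then a finite dimensional quotient of $\affbmw{n}$ with admissible parameters, and by~\cite{Rui-Si-Cyclotomic-II} it possesses a simple module on which $e_1$ acts non--trivially, provided $\Omega$ is not the zero sequence or $n \ne 2$.  Inflating such a module along the surjection $\affbmw{n} \to \bmw{n, F, r}$ produces a simple finite dimensional $\affbmw{n}$--module with $e_1 \ne 0$, as required.

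I do not anticipate a genuine obstacle: the mathematical content is already packaged in Theorem~\ref{theorem:  theorem on  affine bmw algebras with rational coefficient sequence} and in~\cite{Rui-Si-Cyclotomic-II}, and the corollary merely records their combination.  The only points needing attention are bookkeeping: confirming that the exceptional locus ``$\Omega$ is not the zero sequence or $n \ne 2$'' is exactly the hypothesis under which Rui and Si guarantee a simple module with $e_1 \ne 0$ for an admissible cyclotomic BMW algebra, and checking that the hypothesis $q - q\inv \ne 0$ --- required both to invoke the full equivalence in Theorem~\ref{theorem:  theorem on  affine bmw algebras with rational coefficient sequence} and already imposed in~\cite{Rui-Si-Cyclotomic-II} --- is in force throughout, as it is.
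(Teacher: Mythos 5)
Your proposal is correct and follows essentially the same route as the paper: the trivial direction is that a simple finite dimensional module with $e_1$ acting nontrivially witnesses condition \refc{thm item: allow fd repn with e1 not zero}, and the substantive direction passes through condition \refc{thm item: all admissible parameters} to an admissible cyclotomic quotient and then invokes the Rui--Si existence result, inflating the resulting simple module back to $\affbmw{n}$. The paper's own proof is just a one-sentence citation of Rui--Si for the second direction, so your write-up merely makes the same bookkeeping explicit.
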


\begin{proof}  By the results of~\cite{Rui-Si-Cyclotomic-II},  a  cyclotomic BMW algebra
 $\bmw{n, S, r}(\rho, q, \Omega; u_1, \dots, u_r)$  with admissible parameters and $q - q\inv \ne 0$ has a simple module on which $e_1$ is non--zero, as long as $\Omega$ is not the zero sequence or $n \ne 2$.   
\end{proof}

\begin{conjecture} Theorem \ref{theorem:  theorem on  affine bmw algebras with rational coefficient sequence} remains valid when  $q - q\inv = 0$.    
\end{conjecture}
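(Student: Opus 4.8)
Since Theorem \ref{theorem:  theorem on  affine bmw algebras with rational coefficient sequence} already establishes the implications $\refc{thm item: w+ equals w- at t inverse}\Longleftarrow\refc{thm item: linear recurrence over all of Z}\Longleftarrow\refc{thm item: all admissible parameters}\iff\refc{thm item: allow fd repn with e1 not zero}$ with no hypothesis on $q$, and since the only place $q - q\inv \ne 0$ is used in its proof is the implication $\refc{thm item: w+ equals w- at t inverse}\implies\refc{thm item: all admissible parameters}$ (via the generating--function identity \refc{equation:  identity for wplus and wminus}), the conjecture reduces to proving that implication when $q - q\inv = 0$: if $F$ is algebraically closed, $q - q\inv = 0$, $e_1 \ne 0$ in $\affbmw{n}$, and $w^+(t), w^-(t)$ are rational with $w^-(t) = -w^+(t\inv)$ and $w^+$ having no pole at $0$ or $\infty$, then $\rho, q, \Omega$ are $(u_1, \dots, u_r)$--admissible for some $u_1, \dots, u_r \in F$. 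The plan is to imitate the characteristic $2$ part of the proof of Theorem \ref{theorem:  ARM theorem on deg. affine algebras with rational coefficient sequence}, whose two ingredients were the obligatory relations satisfied by the $\omega_a$ whenever $e_1$ is not a torsion element, specialised to the case at hand (the analogue of Lemma \ref{lemma:  ARM weak admissibility conditions}), and a rigidity result for rational sequences satisfying those relations (the analogue of Lemma \ref{lemma:  Kevin Buzzard lemma}).

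I would first record the structural reductions available when $q - q\inv = 0$. Over a field this forces $q = \pm 1$, the Kauffman skein relation degenerates to $g_i^2 = 1$, and \refc{equation:  basic relation in ground ring} forces $\rho^2 = 1$. Putting $q - q\inv = 0$ into \refc{equation:  recursion for the omegas with negative index} of Lemma \ref{lemma: recursion for omegas with negative index and generating function identity} gives $\omega_{-a} = \omega_a$ for all $a \ge 1$, hence $w^-(t) = w^+(t) - \omega_0$; combined with $w^-(t) = -w^+(t\inv)$ this yields the functional equation
\begin{equation*}
w^+(t) + w^+(t\inv) = \omega_0,
\end{equation*}
with $w^+$ rational, $w^+(0) = 0$, and $w^+(\infty) = \omega_0$. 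I would then extract any further obligatory relations on $(\omega_a)_{a\in\Z}$ valid whenever $e_1$ is non--torsion --- these arise from the unwrapping relation $e_1 y_1 g_1 y_1 g_1 = e_1$ together with $g_1^2 = 1$, in the spirit of \cite{GH1}, Section 3, and of Lemma \ref{lemma:  ARM weak admissibility conditions} --- and specialise them at $q = \pm 1$, hoping, as in characteristic $2$, to obtain a quadratic constraint rigid enough to exploit. Finally I would compare the resulting description of $w^+(t)$ with the Wilcox--Yu admissibility conditions for the case $q - q\inv = 0$ \cite{Wilcox-Yu3, Yu-thesis}, and, exactly as in Cases~2--4 of the proof of Theorem \ref{theorem:  theorem on  affine bmw algebras with rational coefficient sequence}, read off a $(u_1, \dots, u_r)$--admissible parameter set, if necessary after appending one or two of the values $\pm 1$ to the list $u_1, \dots, u_s$ so that the normalisation relating $\rho$ to $\prod_j u_j$ holds with the required parity.

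The main obstacle is the analogue of Lemma \ref{lemma:  Kevin Buzzard lemma}. In characteristic $2$ its proof used that $x\mapsto x^2$ is a bijection of the algebraically closed field, so that every element has a unique $2^k$--th root and the constant terms in the expansion of the sequence are forced to be idempotent, which rigidified $w^+$ completely. When $q - q\inv = 0$ but $\char(F)\ne 2$ this device is unavailable, and a different rigidification of $w^+(t)$ is needed. I would therefore split the argument: for $\char(F) = 2$ the perfect--field argument should carry over almost verbatim, and I expect matters then to reduce essentially to the degenerate case already treated; for $q = \pm 1$ with $\char(F)\ne 2$ one must show that the functional equation $w^+(t) + w^+(t\inv) = \omega_0$, the absence of poles of $w^+$ at $0$ and $\infty$, and the specialised obligatory relations together force $w^+$ into the Wilcox--Yu admissible shape --- and it is precisely the adequacy of the available relations here that is unclear, which is why the statement is only conjectured. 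A secondary, bookkeeping difficulty is that the Wilcox--Yu admissibility conditions for $q - q\inv = 0$ are considerably more intricate than the Rui--Xu generating--function criterion of Definition \ref{definition: RX admissibility} used in Theorem \ref{theorem:  theorem on  affine bmw algebras with rational coefficient sequence}, so the concluding case analysis --- on the parity of the number of $u_j$ and on the sign introduced when extracting $\rho$ --- will require some care.
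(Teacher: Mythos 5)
This statement is left as an open conjecture in the paper --- there is no proof to compare against --- and your proposal does not close it either; it is a research plan whose decisive step you yourself concede is missing. Your preliminary reductions are sound and match what one would extract from the paper's own machinery: only the implication $\refc{thm item: w+ equals w- at t inverse} \implies \refc{thm item: all admissible parameters}$ uses $q - q\inv \ne 0$; over a field $q - q\inv = 0$ forces $q = \pm 1$ and, via \refc{equation:  basic relation in ground ring}, $\rho = \pm 1$; and specializing \refc{equation:  recursion for the omegas with negative index} does give $\omega_{-a} = \omega_a$, hence the functional equation $w^+(t) + w^+(t\inv) = \omega_0$. But from there the argument stops. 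The entire content of the conjecture is precisely the rigidity step you flag as unclear: showing that this functional equation, the pole conditions, and whatever obligatory relations survive at $q = \pm 1$ force $w^+$ into the Wilcox--Yu admissible shape. Note that \refc{equation:  identity for wplus and wminus} --- the engine of the paper's proof in the generic case --- becomes vacuous when $q - q\inv = 0$ (part \refc{lemma: recursion for omegas with negative index and generating function identity item 3} of Lemma \ref{lemma: recursion for omegas with negative index and generating function identity} explicitly assumes $q - q\inv \ne 0$), so no substitute constraint of comparable strength is currently available.

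Two further soft spots: your claim that the characteristic-$2$ case ``should carry over almost verbatim'' leans on Lemma \ref{lemma:  ARM weak admissibility conditions}, which is proved for the \emph{degenerate} algebra; the analogous quadratic relation $\omega_{2a} = \omega_a^2$ is not established in the paper for the non-degenerate affine BMW algebra at $q = 1$, so even that sub-case is not actually closed. And the final matching step would have to be carried out against the Wilcox--Yu conditions of~\cite{Wilcox-Yu3,Yu-thesis} rather than the Rui--Xu generating-function criterion, since Definition \ref{definition: RX admissibility} is only formulated for $q - q\inv \ne 0$; you acknowledge this, but it means Cases 2--4 of the paper's proof cannot simply be transplanted. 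In short: the reduction and the strategy are reasonable and consistent with how the paper would presumably attack the problem, but what you have written is an identification of the obstruction, not a proof.
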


 \section{Construction of examples of semi--admissible parameters}
 Examples of cyclotomic (resp.\  degenerate cyclotomic)  BMW algebras with semi--admissible parameters can easily be constructed.  For the sake of clarity, we carry this out for the degenerate cyclotomic BMW algebras only;  non-degenerate cyclotomic BMW algebras with $q^2 \ne 1$  can be treated in a similar way, using the admissibility criterion of Rui and Xu~\cite{rui-2008}.     
 
 Let $S$ be an integral domain with $1/2 \in S$.  Take $0 < d < r$ and $u_1, \dots, u_r \in S$.    
 Assume that $u_i \ne \pm u_j$ for any $i, j$ and that $u_i \ne \pm 1/2$ for any $i$.  
 Let $p(u) = \prod_{1 \le j \le r} (u - u_j)$  and  $p_0(u) = \prod_{1 \le j \le d} (u - u_j)$.  
 Define
 $\omega_a$ for $a \ge 0$ via the $(u_1, \dots, u_d)$--admissibility criterion of~\cite{ariki-mathas-rui}, 
\begin{equation} \label{equation: u admissibility relations up to d}
\omega_a = q_{a+1}(u_1, \dots, u_d) + \frac{1}{2} (-1)^{d-1}q_a(u_1, \dots, u_d)  + \frac{1}{2} \delta_{a, 0}.
\end{equation}
By~\cite{ariki-mathas-rui}, this is equivalent to 
\begin{equation} \label{equation: section 6 first u admissibility equation}
\sum_{a \ge 0} \omega_a  u^{-a} =  1/2 - u + (u - (-1)^{d}/2) \prod_{j = 1}^d  \frac{u + u_j}{u - u_j}
\end{equation}
By the implication 
$\refc{theorem:  equivalent admissibility conditions for degenerate algebras item 4} \implies \refc{theorem:  equivalent admissibility conditions for degenerate algebras item 1}$ 
in Theorem \ref{theorem:  equivalent admissibility conditions for degenerate algebras}  (which is from~\cite{ariki-mathas-rui}),  the parameters
$\Omega = (\omega_a)_{a \ge 0}$  and $u_1, \dots, u_d$  are admissible; i.e.\ 
 the set $\{ e_1, y_1 e_1, \dots, y_1^{d-1} e_1\}$ is linearly independent over $S$ in   $\degbmw{2, S, d}(\Omega, u_1, \dots, u_d)$.
 
 Now consider $\degbmw{2, S, r}(\Omega; u_1, \dots, u_r)$.  Since we have an algebra map
 $$\theta : \degbmw{2, S, r}(\Omega; u_1, \dots, u_r) \to \degbmw{2, S, d}(\Omega; u_1, \dots, u_d),$$
 we have 
    $\{ e_1, y_1 e_1, \dots, y_1^{d-1} e_1\}$ is linearly independent over $S$ in $\degbmw{2, S, r}(\Omega; u_1, \dots, u_r)$.  Let $r'$  be maximal such that $\{ e_1, y_1 e_1, \dots, y_1^{r'-1} e_1\}$ is linearly independent in $\degbmw{2, S, r}(\Omega; u_1, \dots, u_r)$.  Then by the argument following Definition \ref{definition: semi admissible},  there is a subset $\{v_1, \dots, v_{r'}\}$  of 
    $\{u_1, \dots, u_r\}$  such that   $p_1(y_1) e_1 : =  \prod_{j = 1}^{r'} (y_1 - v_j)  e_1  = 0$, and
    $h(y_1) e_1 \ne 0$  for any polynomial $h$ of degree less than $r'$.   Now by Lemma
    \ref{lemma:   semi--admissibility implies admissibility in fewer variables}  and Theorem
    \ref{theorem:  equivalent admissibility conditions for degenerate algebras},  the set of parameters
    $\Omega, v_1, \dots, v_{r'}$  satisfies the $(v_1, \dots, v_{r'})$--admissibility conditions.  Hence we also have
\begin{equation} \label{equation: section 6 second u admissibility equation}
\sum_{a \ge 0} \omega_a  u^{-a} =  1/2 - u + (u - (-1)^{r'}/2) \prod_{j = 1}^{r' } \frac{u + v_j}{u - v_j}.
\end{equation}    
Comparing Equations (\ref{equation: section 6 first u admissibility equation}) and  (\ref{equation: section 6 second u admissibility equation}),  and taking into account the assumptions on 
$\{u_1, \dots, u_r\}$,  we conclude that $d = r'$  and $\{v_1, \dots, v_d\} = \{u_1, \dots, u_d\}$.   
 Thus the parameters $\Omega, u_1, \dots, u_r$  are $d$--semi--admissible.

\bibliographystyle{amsplain}
\bibliography{admissibility}

\def\cprime{$'$} \def\cprime{$'$}
\providecommand{\bysame}{\leavevmode\hbox to3em{\hrulefill}\thinspace}
\providecommand{\MR}{\relax\ifhmode\unskip\space\fi MR }
% \MRhref is called by the amsart/book/proc definition of \MR.
\providecommand{\MRhref}[2]{%
  \href{http://www.ams.org/mathscinet-getitem?mr=#1}{#2}
}
\providecommand{\href}[2]{#2}
\begin{thebibliography}{10}

\bibitem{ariki-book}
Susumu Ariki, \emph{Representations of quantum algebras and combinatorics of
  {Y}oung tableaux}, University Lecture Series, vol.~26, American Mathematical
  Society, Providence, RI, 2002, Translated from the 2000 Japanese edition and
  revised by the author. \MR{MR1911030 (2004b:17022)}

\bibitem{ariki-koike}
Susumu Ariki and Kazuhiko Koike, \emph{A {H}ecke algebra of {$({\bf Z}/r{\bf
  Z})\wr{\mathfrak{S}_n}$} and construction of its irreducible
  representations}, Adv. Math. \textbf{106} (1994), no.~2, 216--243.
  \MR{MR1279219 (95h:20006)}

\bibitem{ariki-mathas-rui}
Susumu Ariki, Andrew Mathas, and Hebing Rui, \emph{Cyclotomic {N}azarov-{W}enzl
  algebras}, Nagoya Math. J. \textbf{182} (2006), 47--134. \MR{MR2235339}

\bibitem{Ram-Daugherty-Virk}
Z.~{Daugherty}, A.~{Ram}, and R.~{Virk}, \emph{{Affine and degenerate affine
  BMW algebras: The center}}, preprint (2011), arxiv:1105.4207.

\bibitem{goodman-2008}
Frederick~M. Goodman, \emph{Cellularity of cyclotomic
  {B}irman-{W}enzl-{M}urakami algebras}, Journal of Algebra \textbf{321}
  (2009), no.~11, 3299 -- 3320, Special Issue in Honor of Gus Lehrer.

\bibitem{goodman-admissibility}
\bysame, \emph{Comparison of admissibility conditions for cyclotomic
  {B}irman-{W}enzl-{M}urakami algebras}, J. Pure Appl. Algebra \textbf{214}
  (2010), no.~11, 2009 -- 2016.

\bibitem{goodman-degenerate-admissibility}
\bysame, \emph{{Admissibility Conditions for Degenerate Cyclotomic {BMW}
  Algebras}}, Comm. Algebra \textbf{39} (2011), no.~2, 452--461.

\bibitem{goodman-graber1}
Frederick~M. Goodman and John Graber, \emph{{C}ellularity and the {J}ones basic
  construction}, Advances in Applied Mathematics \textbf{46} (2011), no.~1-4,
  312 -- 362, Special issue in honor of Dennis Stanton.

\bibitem{goodman-graber2}
\bysame, \emph{On cellular algebras with {J}ucys {M}urphy elements}, Journal of
  Algebra \textbf{330} (2011), no.~1, 147 -- 176.

\bibitem{GH1}
Frederick~M. Goodman and Holly Hauschild, \emph{Affine
  {B}irman-{W}enzl-{M}urakami algebras and tangles in the solid torus}, Fund.
  Math. \textbf{190} (2006), 77--137. \MR{MR2232856}

\bibitem{GH2}
Frederick~M. Goodman and Holly~Hauschild Mosley, \emph{Cyclotomic
  {B}irman-{W}enzl-{M}urakami algebras {I}: Freeness and realization as tangle
  algebras}, J. Knot Theory Ramif. \textbf{18} (2009), 1089--1127.

\bibitem{GH3}
\bysame, \emph{Cyclotomic {B}irman-{W}enzl-{M}urakami algebras, {II}:
  admissibility relations and freeness}, Algebr. Represent. Theory \textbf{14}
  (2011), no.~1, 1--39. \MR{2763289}

\bibitem{Graham-Lehrer-cellular}
J.~J. Graham and G.~I. Lehrer, \emph{Cellular algebras}, Invent. Math.
  \textbf{123} (1996), no.~1, 1--34. \MR{MR1376244 (97h:20016)}

\bibitem{H-O2}
Reinhard H{\"a}ring-Oldenburg, \emph{Cyclotomic {B}irman-{M}urakami-{W}enzl
  algebras}, J. Pure Appl. Algebra \textbf{161} (2001), no.~1-2, 113--144.
  \MR{MR1834081 (2002c:20055)}

\bibitem{kleshchev-book}
Alexander Kleshchev, \emph{Linear and projective representations of symmetric
  groups}, Cambridge Tracts in Mathematics, vol. 163, Cambridge University
  Press, Cambridge, 2005. \MR{MR2165457 (2007b:20022)}

\bibitem{morton-wassermann}
Hugh Morton and Antony Wassermann, \emph{A basis for the {B}irman-{W}enzl
  algebra}, Unpublished manuscript (1989, revised 2000), 1--29, placed on the
  arxiv by H.R. Morton, 2010, arXiv:1012.3116.

\bibitem{nazarov}
Maxim Nazarov, \emph{Young's orthogonal form for {B}rauer's centralizer
  algebra}, J. Algebra \textbf{182} (1996), no.~3, 664--693. \MR{MR1398116
  (97m:20057)}

\bibitem{orellana-ram}
Rosa Orellana and Arun Ram, \emph{Affine braids, {M}arkov traces and the
  category {$\mathcal O$}}, Algebraic groups and homogeneous spaces, Tata Inst.
  Fund. Res. Stud. Math., Tata Inst. Fund. Res., Mumbai, 2007, pp.~423--473.
  \MR{2348913 (2008m:17034)}

\bibitem{rui-si-degenerate}
Hebing Rui and Mei Si, \emph{On the structure of cyclotomic {N}azarov-{W}enzl
  algebras}, J. Pure Appl. Algebra \textbf{212} (2008), no.~10, 2209--2235.
  \MR{MR2418167}

\bibitem{Rui-Si-Cyclotomic-II}
\bysame, \emph{The representations of cyclotomic {BMW} algebras, {II}}, Algebr.
  Represent. Theory (2010), 1--29, 10.1007/s10468-010-9249-z.

\bibitem{rui-2008}
Hebing Rui and Jie Xu, \emph{The representations of cyclotomic {BMW} algebras},
  J. Pure Appl. Algebra \textbf{213} (2009), no.~12, 2262 -- 2288.

\bibitem{Wilcox-Yu3}
Stewart Wilcox and Shona Yu, \emph{On the freeness of the cyclotomic {BMW}
  algebras: admissibility and an isomorphism with the cyclotomic {K}auffman
  tangle algebras}, preprint (2009), arXiv:0911.5284.

\bibitem{Wilcox-Yu}
Stewart Wilcox and Shona Yu, \emph{The cyclotomic {BMW} algebra associated with
  the two string type {B} braid group}, Communications in Algebra \textbf{39}
  (2011), no.~11, 4428--4446.

\bibitem{Wilcox-Yu2}
Stewart Wilcox and Shona Yu, \emph{On the cellularity of the cyclotomic
  {B}irman-{M}urakami-{W}enzl algebras}, J. Lond. Math. Soc. (to appear).

\bibitem{Yu-thesis}
Shona Yu, \emph{The cyclotomic {B}irman--{M}urakami--{W}enzl algebras}, Ph.D.
  Thesis, University of Sydney (2007), arXiv:0810.0069.

\end{thebibliography}

 \end{document}